\newtheorem{theorem}{{\bf Theorem}}[section]
\newtheorem{result}{{\bf Result}}[section]
\newtheorem{claim}{{\bf Claim}}[section]
\newtheorem{example}[theorem]{{\bf Example}}
\newtheorem{lemma}[theorem]{{\bf Lemma}}
\newtheorem{proposition}[theorem]{{\bf Proposition}}
\newtheorem{question}[theorem]{{\bf Question}}
\newcommand{\vg}{\vspace{0.5cm}}
\newcommand{\ol}{\overline}
\newcommand{\ZZ}{ \ensuremath{\mathbb{Z}}}
\begin{document}

\title{Semi-equivelar toroidal maps and their $k$-edge covers}

	\author[1] {Arnab Kundu}
 	\author[2] { Dipendu Maity}
 	\affil[1, 2]{Department of Science and Mathematics,
 		Indian Institute of Information Technology Guwahati, Bongora, Assam-781\,015, India.\linebreak
 		\{arnab.kundu, dipendu\}@iiitg.ac.in/\{kunduarnab1998, dipendumaity\}@gmail.com.}

\date{\today}

\maketitle

\begin{abstract}
If the face\mbox{-}cycles at all the vertices in a map are of same type then the map is called semi\mbox{-}equivelar. In particular, it is called equivelar if the face-cycles contain same type of faces.
A map $K$ on a surface is called edge-transitive if the automorphism group of $K$ acts transitively on the set of edges of $K$.  A tiling is edge-homogeneous if any two edges with vertices of congruent face-cycles. In general, edge-homogeneous maps on a surface form a bigger class than edge-transitive maps. There are edge-homogeneous toroidal maps which are not edge\mbox{-}transitive. 
An edge-homogeneous map is called $k$-edge-homogeneous if it contains $k$ number of edge orbits.  In particular, if $k=1$ then it is called edge-transitive map. In general, a map is called $k$-edge orbital or $k$-orbital if it contains $k$ number of edge orbits. A map is called minimal if the number of edges is minimal. 
A surjective mapping $\eta \colon M \to K$ from a map $M$ to a map $K$ is called a covering if it preserves adjacency and sends vertices, edges, faces of $M$ to vertices, edges, faces of $K$ respectively. Orbani{\' c} et al. and {\v S}ir{\'a}{\v n} et al. have shown that every edge-homogeneous toroidal map has edge-transitive cover. In this article, we show the bounds of edge orbits of edge-homogeneous toroidal maps. Using these bounds, we show the bounds of edge orbits of non-edge-homogeneous semi-equivelar toroidal maps. We also prove that if a edge-homogeneous map is $k$ edge orbital then it has a finite index $m$-edge orbital minimal cover for $m \le k$. We also show the existence and classification of $n$ sheeted covers of edge-homogeneous toroidal maps for each $n \in \mathbb{N}$. We extend this to non-edge-homogeneous semi-equivelar toroidal maps and prove the same results, i.e., if a non-edge-homogeneous map is $k$ edge orbital then it has a finite index $m$-edge orbital minimal cover (non-edge-homogeneous) for $m \le k$ and then classify them for each sheet. 
\end{abstract}

\noindent {\small {\em MSC 2010\,:} 52C20, 52B70, 51M20, 57M60.

\noindent {\em Keywords: Edge-homogeneous maps; $m$-edge orbital covering maps; Classification of covering maps} }

\section{Introduction and results}

A {\em map K} is an embedding of a graph {\em G} on a surface {\em S} such that the closure of components of $S \setminus G$, called the {\em faces} of $K$, are homeomorphic to $2$-discs. A map $K$ is said to be a {\em polyhedral map} if the intersection of any two distinct faces is either empty, a common vertex, or a common edge. Here map means polyhedral map.

The $face\mbox{-}cycle$ $C_u$ of a vertex $u$ (also called the {\em vertex-figure} at $u$) in a map is the ordered sequence of faces incident to $u$. So, $C_u$ is of the form $(F_{1,1}\mbox{-}\cdots \mbox{-}F_{1,n_1})\mbox{-}\cdots\mbox{-}(F_{k,1}\mbox{-}$ $\cdots \mbox{-}F_{k,n_k})\mbox{-}F_{1,1}$, where $F_{i,\ell}$ is a $p_i$-gon for $1\leq \ell \leq n_i$, $1\leq i \leq k$, $p_r\neq p_{r+1}$ for $1\leq r\leq k-1$ and $p_n\neq p_1$. The types of the faces in $C_u$ defines the type of $C_u$. In this case, the type of face-cycle($u$) is $[p_1^{n_1}, \dots, p_k^{n_k}]$. A map $M$ is called {\em semi-equivelar} (\cite{DM2018}, we are including the same definition for the sake of completeness) if $C_u$ and $C_v$ are of same type for all $u, v \in V(X)$. More precisely, there exist integers $p_1, \dots, p_k\geq 3$ and $n_1, \dots, n_k\geq 1$, $p_i\neq p_{i+1}$ (addition in the suffix is modulo $k$) such that $C_u$ is of the form as above for all $u\in V(X)$. In such a case, $X$ is called a semi-equivelar map of type (or vertex type) $[p_1^{n_1}, \dots, p_k^{n_k}]$ (or, a map of type $[p_1^{n_1}, \dots, p_k^{n_k}]$). A map $K$ is called {\em equivelar} if $C_u$ and $C_v$ are of same type with same type of faces for all $u, v \in V(K)$. In such a case, $K$ is called a $equivelar$ map.

Two maps of fixed type on the torus are {\em isomorphic} if there exists a {\em homeomorphism} of the torus which sends vertices to vertices, edges to edges, faces to faces and preserves incidents. More precisely, 
if we consider two polyhedral complexes $K_{1}$ and $K_{2}$ then an isomorphism to be a map $f ~:~ K_{1}\rightarrow K_{2}$ such that $f|_{V(K_{1})} : V(K_{1}) \rightarrow V(K_{2})$ is a bijection and $f(\sigma)$ is a cell in $K_{2}$ if and only if $\sigma$ is cell in $K_{1}$. If $K_1 = K_2$, $f$ is called an $automorphism$ and Aut($K_1$) (automorphism group) is the set of all automorphisms of $K_1$. 

An edge in a map has edge-symbol $(m, \ell; u, v)$  if it is incident to vertices of valence $u$ and $v$ and faces of size $m$ and $\ell$. A map is called {\em edge-homogeneous} if all its edges have the same edge symbol $(m, \ell;u, v)$. Note that this requires vertex valences to alternate between $u$ and $v$ around each face, and face sizes to alternate between $m$ and $\ell$ around each vertex. In particular, $u$, $v$, $m$, $\ell$ must satisfy the condition: if $u$ or $v$ is odd, then $m = \ell$, and if $m$ or $\ell$ is odd, then $u = v$.
An edge-transitive map is obviously edge-homogeneous.

\smallskip 

We know from \cite{Orbanic2011, siran2001} the following result.

\begin{proposition} \label{prop1} Let $K$ be an edge-transitive map of edge-symbol $(m, \ell; u, v)$ on the surface $S$. If $S = \mathbb{R}^2$, then $(m, \ell; u, v) = (3, 3; 6, 6), (4, 4; 4, 4),(6, 6; 3, 3), (3, 6; 4, 4)$ or $(4, 4; 3, 6)$. If $S$ is torus, then $(m, \ell; u, v)$ = $(3, 3; 6, 6)$, $(4, 4; 4, 4)$, $(6, 6; 3, 3)$, $(3, 6; 4, 4)$ or $(4, 4; 3, 6)$. 
\end{proposition}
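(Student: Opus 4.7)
\medskip

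\noindent\textbf{Proof proposal.} The plan is to reduce the statement to a Diophantine equation, then enumerate under the parity constraints noted just before the proposition. I would first show that the edge-symbol data together with double counting forces a key identity. Each edge has one endpoint of valence $u$ and one of valence $v$ (counted with multiplicity when $u=v$), and is incident to one $m$-face and one $\ell$-face; summing valences and face sizes and comparing with $2E$ gives
\[
V \;=\; E\!\left(\tfrac{1}{u}+\tfrac{1}{v}\right), \qquad F \;=\; E\!\left(\tfrac{1}{m}+\tfrac{1}{\ell}\right).
\]
Plugging into Euler's formula $V-E+F=\chi(S)=0$ for the torus yields the relation
\[
\frac{1}{u}+\frac{1}{v}+\frac{1}{m}+\frac{1}{\ell}\;=\;1.
\]
For the planar case I would obtain the same identity either from an averaged Euler relation (counting vertices, edges, and faces in a large disc and taking a limit) or, more cleanly, from the fact that an edge-transitive map on $\mathbb{R}^2$ arises as the universal cover of an edge-transitive toroidal map; the same alternation rules pass to the quotient.

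Next I would enumerate integer solutions of this equation with $u,v,m,\ell\geq 3$ subject to the two parity rules recalled in the excerpt: if $u$ or $v$ is odd then $m=\ell$, and if $m$ or $\ell$ is odd then $u=v$. I would split into four cases according to whether $u=v$ and whether $m=\ell$. In Case~(a), $u=v$ and $m=\ell$, the equation becomes $1/u+1/m=1/2$ with $u,m\geq 3$, giving the classical triples $(u,m)\in\{(3,6),(4,4),(6,3)\}$ and hence $(6,6;3,3)$, $(4,4;4,4)$, $(3,3;6,6)$. In Case~(b), $u=v$ and $m\neq\ell$, the rules force $u$ even; checking $u=4$ gives $1/m+1/\ell=1/2$ with $m\neq\ell$, whose only solution is $\{m,\ell\}=\{3,6\}$, yielding $(3,6;4,4)$, while $u\geq 6$ forces $1/m+1/\ell\geq 2/3$ which has no solution with $m\neq\ell$. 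Case~(c), $u\neq v$ and $m=\ell$, is symmetric to Case~(b) and produces $(4,4;3,6)$. In Case~(d), $u\neq v$ and $m\neq\ell$, both parity rules force all four of $u,v,m,\ell$ to be even and $\geq 4$, so the sum is at most $1$ with equality only when all equal $4$, contradicting $u\neq v$; hence no solution. Collating the cases produces exactly the five edge-symbols in the proposition.

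Finally, to complete the proof I would note that each surviving edge-symbol is realized by the standard regular or Archimedean tiling of the Euclidean plane (triangular, square, hexagonal, trihexagonal, and its dual the rhombille tiling for $(4,4;3,6)$), which descend to edge-transitive maps on the torus via any sublattice of translations of the Euclidean symmetry group; this step may also be invoked directly from \cite{Orbanic2011, siran2001}.

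The main obstacle I expect is the planar case, since Euler's formula is not literally available for a non-compact surface; the cleanest way around this is the universal-cover viewpoint above, which turns the question into the toroidal one. A subsidiary technical point is making sure the double-counting identities for $V$ and $F$ are stated uniformly in the degenerate cases $u=v$ and $m=\ell$, which is routine but worth doing carefully.
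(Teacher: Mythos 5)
The paper does not actually prove this proposition: it is quoted from \cite{Orbanic2011, siran2001}, so your attempt is being measured against a self-contained derivation rather than against an argument in the text. Your toroidal case is correct and complete. The double-counting identities $V=E(1/u+1/v)$ and $F=E(1/m+1/\ell)$ hold uniformly (including when $u=v$ or $m=\ell$), Euler's formula for $\chi=0$ gives $1/u+1/v+1/m+1/\ell=1$, and your four-case enumeration under the two parity rules is exhaustive and yields exactly the five symbols; the realizability remark at the end is also fine.

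The genuine gap is the planar case, and your proposed repair does not close it. The claim that ``an edge-transitive map on $\mathbb{R}^2$ arises as the universal cover of an edge-transitive toroidal map'' is either circular or false as stated: to know that such a map covers a torus you need its automorphism group to contain a cocompact rank-$2$ translation lattice, which is essentially equivalent to the Euclidean conclusion you are trying to prove. Indeed, the order-$7$ triangular tessellation of the hyperbolic plane is, topologically, an edge-transitive map on $\mathbb{R}^2$ with edge-symbol $(3,3;7,7)$, and it covers only higher-genus surfaces, never a torus; so the proposition is false for arbitrary topological maps on $\mathbb{R}^2$ and must implicitly be read as a statement about Euclidean tilings (by regular polygons, or at least normal tilings with uniformly bounded tiles). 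Your alternative route, the averaged Euler relation over an exhaustion by large discs, is the right one, but it requires exactly this normality hypothesis to control the boundary terms (the ratio of boundary edges to interior edges must tend to $0$), and that hypothesis fails for the combinatorial hyperbolic examples. So you should either (i) add the geometric hypothesis explicitly and carry out the limiting Euler argument, noting where normality is used, or (ii) restrict the planar statement to the eleven Archimedean tilings as the paper implicitly does and verify the five symbols directly, or (iii) simply cite \cite{Orbanic2011, siran2001} for the planar half, as the paper itself does.
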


In \cite{JK2020, JK2021},  Karab{\'a}{\v s} et al. are also working in the direction of the classification of edge-transitive maps on the orientable higher genus surfaces. The authors have enumerated and listed the edge-transitive maps for the orientable genus $g = 2, 3, \dots, 28$.

An {\em edge-homogeneous} tiling of the plane $\mathbb{R}^2$ is a tiling of $\mathbb{R}^2$ by regular polygons such that all the vertices of the tiling are of same type. Gr\"{u}nbaum and Shephard \cite{GS1977} showed that there are exactly eleven types of edge-homogeneous tilings on the plane. These types are $[3^6]$, $[4^4]$, $[6^3]$, $[3^4,6^1]$, $[3^3,4^2]$,  $[3^2,4^1,3^1,4^1]$, $[3^1,6^1,3^1,6^1]$, $[3^1,4^1,6^1,4^1]$, $[3^1,12^2]$,  $[4^1,6^1,12^1]$, $[4^1,8^2]$.
Clearly, the maps of types $[3^4,6^1]$, $[3^3,4^2]$,  $[3^2,4^1,3^1,4^1]$,  $[3^1,4^1,6^1,4^1]$, $[3^1,12^2]$,  $[4^1,6^1,12^1]$, $[4^1,8^2]$ are not edge-homogeneous. We know from \cite{DU2005, DM2017, DM2018}  that the edge-homogeneous tilings are unique as edge-homogeneous maps and as a consequence of this we have the following. 

\begin{proposition} \label{propo-1}
All edge-homogeneous maps on the torus are the quotient of an edge-homogeneous tiling on $\mathbb{R}^2$ by a discrete subgroup of the automorphism group of the tiling.
\end{proposition}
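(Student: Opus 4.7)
The plan is to use covering space theory together with the uniqueness of edge-homogeneous tilings of $\mathbb{R}^2$. Let $K$ be an edge-homogeneous map on the torus $T$ with edge-symbol $(m,\ell;u,v)$, and let $\pi \colon \mathbb{R}^2 \to T$ be the universal covering map. First I would pull back the cell structure of $K$ along $\pi$ to obtain a CW decomposition $\widetilde{K} := \pi^{-1}(K)$ of $\mathbb{R}^2$. Because $\pi$ is a local homeomorphism that carries open $2$-cells, open edges, and vertices of $\widetilde{K}$ bijectively onto those of $K$ on each evenly covered neighborhood, the faces of $\widetilde{K}$ are $2$-discs, and $\widetilde{K}$ is a polyhedral map on $\mathbb{R}^2$.

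The next step is to verify that $\widetilde{K}$ is edge-homogeneous with the same edge-symbol as $K$. This is local: the face-cycle at any vertex $\tilde{u}\in\widetilde{K}$ is mapped isomorphically by $\pi$ to the face-cycle at $\pi(\tilde{u})\in K$, and likewise for the two faces meeting at any edge and the two vertices incident to it. Hence every edge of $\widetilde{K}$ has edge-symbol $(m,\ell;u,v)$, so $\widetilde{K}$ is an edge-homogeneous tiling of $\mathbb{R}^2$. By the Gr\"unbaum--Shephard classification together with the uniqueness of edge-homogeneous tilings as maps (cited from \cite{GS1977, DU2005, DM2017, DM2018}), $\widetilde{K}$ is isomorphic, as a map, to one of the eleven standard edge-homogeneous tilings of $\mathbb{R}^2$.

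Finally I would identify the deck transformation group $\Gamma$ of the cover $\pi$ with a subgroup of $\mathrm{Aut}(\widetilde{K})$ and show $K\cong \widetilde{K}/\Gamma$. Each deck transformation is a homeomorphism of $\mathbb{R}^2$ that permutes the fibers of $\pi$, and since the cell structure $\widetilde{K}$ is the $\pi$-preimage of the cell structure of $K$, deck transformations send cells of $\widetilde{K}$ to cells of $\widetilde{K}$ preserving incidences; i.e., they are combinatorial automorphisms of $\widetilde{K}$. As $\pi_1(T)\cong\mathbb{Z}^2$ acts freely and properly discontinuously on $\mathbb{R}^2$, $\Gamma$ is a discrete subgroup of $\mathrm{Aut}(\widetilde{K})$, and the induced map $\widetilde{K}/\Gamma \to K$ is a cellular isomorphism.

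The main obstacle I expect is the second step: rigorously arguing that the lifted map $\widetilde{K}$ inherits the full edge-symbol data of $K$, and then invoking the uniqueness of edge-homogeneous tilings to conclude that $\widetilde{K}$ is (isomorphic to) a standard planar tiling whose automorphism group is known. Once this is in hand, the identification $K\cong \widetilde{K}/\Gamma$ with $\Gamma \le \mathrm{Aut}(\widetilde{K})$ discrete follows from standard covering space theory.
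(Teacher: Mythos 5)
Your argument is correct, and it is essentially the argument the paper relies on: the paper states this proposition without proof, presenting it as a consequence of the uniqueness results cited from \cite{DU2005, DM2017, DM2018}, and your lift-to-the-universal-cover argument (pull back the cell structure along $\pi\colon\mathbb{R}^2\to T$, observe that the edge-symbol is a local invariant preserved by the covering, invoke uniqueness of the planar edge-homogeneous tiling of that symbol, and identify $K$ with the quotient by the deck group, which is a discrete subgroup of $\mathrm{Aut}(\widetilde{K})$) is exactly the standard mechanism behind those citations. One cosmetic correction: only five of the eleven Archimedean vertex types are edge-homogeneous (the paper itself notes that the other seven are not), so $\widetilde{K}$ is isomorphic to one of the five tilings $M_1,\dots,M_5$ of Figure \ref{fig:eh}, not to ``one of the eleven.''
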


The dual map of $K$, denoted by $K^{*}$, lies in the same surface as $K$ and has the property that $(u, e, f)$  is edge  of $K$ if and only if $(f, e^*, u)$ is edge  of $K^*$. Intuitively, the ``dual edges'' $e$ and $e^*$ may be thought of as segments that bisect each other. Hence, we have the following. 

\begin{proposition} (\cite{siran2001}) \label{prop2} $K^*$ is edge-transitive if and only if  $K$ is edge-transitive. 
\end{proposition}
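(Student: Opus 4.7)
The plan is to exploit the natural isomorphism between $\mathrm{Aut}(K)$ and $\mathrm{Aut}(K^{*})$ induced by duality, and then observe that this isomorphism sends the edge set of $K$ bijectively onto the edge set of $K^{*}$ via $e \mapsto e^{*}$, so that the two edge actions are equivalent.

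First, I would make the duality assignment explicit. By the definition of $K^{*}$, the vertices of $K^{*}$ are the faces of $K$, the faces of $K^{*}$ are the vertices of $K$, and $e \mapsto e^{*}$ is a bijection between the edge sets, with the incidence $(u,e,f)$ in $K$ corresponding to $(f,e^{*},u)$ in $K^{*}$. Given $\phi \in \mathrm{Aut}(K)$, define $\phi^{*}$ on $K^{*}$ by $\phi^{*}(f) = \phi(f)$ on vertices of $K^{*}$, $\phi^{*}(u) = \phi(u)$ on faces of $K^{*}$, and $\phi^{*}(e^{*}) = \phi(e)^{*}$ on edges. I would then verify that $\phi^{*}$ preserves the incidence relation of $K^{*}$: indeed, $(f, e^{*}, u)$ is an incidence in $K^{*}$ iff $(u, e, f)$ is an incidence in $K$, which $\phi$ sends to $(\phi(u), \phi(e), \phi(f))$, and translating back through duality gives $(\phi(f), \phi(e)^{*}, \phi(u)) = (\phi^{*}(f), \phi^{*}(e^{*}), \phi^{*}(u))$.

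Next, I would check that $\phi \mapsto \phi^{*}$ is a group homomorphism $\mathrm{Aut}(K) \to \mathrm{Aut}(K^{*})$, and that applying the construction twice (using $K^{**} = K$) gives the identity, so that $\phi \mapsto \phi^{*}$ is in fact a group isomorphism. The crucial feature of this isomorphism is that it is compatible with the edge bijection $e \leftrightarrow e^{*}$: for all $\phi \in \mathrm{Aut}(K)$ and all edges $e_{1}, e_{2}$ of $K$, we have $\phi(e_{1}) = e_{2}$ if and only if $\phi^{*}(e_{1}^{*}) = e_{2}^{*}$. Consequently, the orbits of $\mathrm{Aut}(K)$ on $E(K)$ correspond bijectively to the orbits of $\mathrm{Aut}(K^{*})$ on $E(K^{*})$ via the duality $e \leftrightarrow e^{*}$.

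Finally, the conclusion is immediate: $K$ is edge-transitive iff the action of $\mathrm{Aut}(K)$ on $E(K)$ has a single orbit, iff the action of $\mathrm{Aut}(K^{*})$ on $E(K^{*})$ has a single orbit, iff $K^{*}$ is edge-transitive. The main (in fact, only) technical obstacle is the bookkeeping in verifying that $\phi^{*}$ indeed respects all the incidences that define the polyhedral structure of $K^{*}$, i.e., that each cell of $K^{*}$ is mapped to a cell of the correct dimension with the correct boundary relations; this is a routine check once one notes that the polyhedral structure of $K^{*}$ is determined entirely by the incidences of $K$, which $\phi$ preserves.
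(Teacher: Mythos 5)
Your proof is correct and is essentially the rigorous version of the paper's own justification: the paper does not prove this proposition (it cites \v{S}ir\'{a}\v{n}--Tucker--Watkins) and merely remarks that dual edges bisect each other and that ${\rm Aut}(K) = {\rm Aut}(K^*)$, which is precisely the isomorphism $\phi \mapsto \phi^*$ compatible with $e \mapsto e^*$ that you construct. The only detail worth adding is that, since the paper's automorphisms are induced by homeomorphisms of the underlying surface and $K^*$ lies in the same surface, the same homeomorphism realizing $\phi$ also realizes $\phi^*$, so no separate topological argument is needed.
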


Note that the dual of the maps of type $[4^4]$ are again the maps of type $[4^4]$. The dual of the map of type $[3^6]$, however, is the map of type $[6^3]$. Thus, form Prop. \ref{prop2}, $K$ with edge-symbol $(m, \ell; u, v) = (3, 3; 6, 6)$ is edge-transitive if and only if  $K^*$ with edge-symbol $(m, \ell; u, v) = (6, 6; 3, 3)$ is edge-transitive on the torus. Clearly, Aut$(K)$ = Aut$(K^*)$. Its also easy to observe that if $K = E/H$ for some tiling $E$ and group $H$, then $K^* = E^*/H$. Thus, we study maps of edge-symbols $(m, \ell; u, v)$ = $(3, 3; 6, 6)$, $(4, 4; 4, 4)$, $(3, 6; 4, 4)$ and $(4, 4; 3, 6)$ on the torus.  Hence, we prove the following.

\begin{theorem}\label{thm:no-of-orbits}
Let $K$ be a map of edge-symbol $(m, \ell; u, v)$ on the torus. Let the edges of $K$ form $m$ ${\rm Aut}(X)$ edge orbits. {\rm (a)} If $(m, \ell; u, v) = (3, 3; 6, 6)$, $(6, 6; 3, 3)$ $(3, 6; 4, 4)$ and $(4, 4; 3, 6)$, then $m\leq 3$.
{\rm (b)} If $(m, \ell; u, v) = (4, 4; 4, 4)$, then $m\leq 2$.
{\rm (c)} The bounds in {\rm (a)}, {\rm (b)} are sharp. 
\end{theorem}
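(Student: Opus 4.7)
The plan is as follows. By Proposition \ref{propo-1}, I would write $K = E/H$, where $E$ is the unique edge-homogeneous plane tiling with the prescribed edge-symbol and $H \subseteq {\rm Aut}(E)$ is a discrete subgroup. Since $K$ lives on the torus, $H$ must act freely on $\mathbb{R}^2$, which forces $H$ to consist entirely of translations; thus $H$ is a rank-$2$ sublattice of the translation subgroup $T \subseteq {\rm Aut}(E)$.

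The key idea is to count edge orbits via the \emph{parallel classes} of edges of $E$. In each of the tilings under consideration, edges fall into three directions for the triangular $[3^6]$, hexagonal $[6^3]$, and trihexagonal $[(3,6)^2]$ tilings, and two directions (horizontal and vertical) for the square $[4^4]$ tiling. Direct inspection will show that $T$ acts transitively on each parallel class, and every element of ${\rm Aut}(E)$ permutes these classes, yielding a homomorphism $\phi \colon {\rm Aut}(E) \to \text{Sym}(\text{parallel classes})$. Standard covering-space reasoning then gives ${\rm Aut}(K) \cong N_{{\rm Aut}(E)}(H)/H$, and since $H \subseteq T$ with $T$ abelian, we have $T \subseteq N_{{\rm Aut}(E)}(H)$. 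Therefore the ${\rm Aut}(K)$-orbits on edges of $K$ correspond bijectively to the $\phi(N_{{\rm Aut}(E)}(H))$-orbits on parallel classes, giving at most $3$ orbits in case (a) and at most $2$ in case (b).

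For sharpness in (c), I would exhibit sublattices $H$ whose normalizer lies in the kernel of $\phi$. In case (b), the rectangular sublattice of the square lattice with basis $\{(2,0),(0,1)\}$ suffices: no $90^{\circ}$ rotation or diagonal reflection of ${\rm Aut}(E)$ preserves this $H$, while the remaining non-translation symmetries (horizontal/vertical reflections and $180^{\circ}$ rotation) fix each parallel class. In case (a), working in the complex coordinate $\omega = e^{i\pi/3}$, the sublattice $H = 2\mathbb{Z} + 3\omega\mathbb{Z}$ of the triangular lattice $\mathbb{Z}[\omega]$ is preserved by no rotation by $60^{\circ}$ or $120^{\circ}$ and by no reflection that acts non-trivially on the three directions; the hexagonal $[6^3]$ and trihexagonal $[(3,6)^2]$ subcases follow by the same construction since they share the symmetry group of the triangular tiling.

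The main obstacle will be the case-by-case verification that $T$ acts transitively on each parallel class (subtle for the trihexagonal tiling, where the combinatorics is least uniform), together with systematically ruling out, for each sharpness candidate $H$, every reflection or rotation of ${\rm Aut}(E)$ that might accidentally normalize $H$ while swapping or cycling parallel classes.
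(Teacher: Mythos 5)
Your overall architecture --- write $K=E/H$ with $H$ a rank-$2$ lattice of translations, bound the number of $\mathrm{Aut}(K)$-orbits on edges by the number of edge orbits of a subgroup of $N_{\mathrm{Aut}(E)}(H)$ that is guaranteed to be present, and then exhibit lattices whose normalizer is as small as possible --- is essentially the paper's. But the step you defer to ``direct inspection,'' namely that the translation group $T$ acts transitively on each parallel class, is \emph{false} in two of the five cases, and it is precisely the case you flag as subtle. In the trihexagonal (kagome) tiling there are six edges per fundamental cell, two in each of the three directions, and the two edges of a given direction (one bounding an upward triangle, one bounding a downward triangle) are \emph{not} translation-equivalent, since lattice translations preserve the up/down triangle classes; the paper records exactly this, stating that $E(M_3)$ has six $H_3$-orbits. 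The same failure occurs in the dual rhombille tiling $(4,4;3,6)$: the two edges of a common direction in a cell have their degree-$6$ endpoints at opposite ends, and translations preserve vertex degrees. So your argument as written only gives $m\le 6$ for the symbols $(3,6;4,4)$ and $(4,4;3,6)$. The repair is forced and is what the paper does: work with $G=\langle T,\chi\rangle$, where $\chi:z\mapsto -z$ is the point inversion. Since conjugating a translation by $\chi$ gives its inverse, $\chi$ normalizes \emph{every} translation lattice $H$, and $\chi$ fuses the two translation orbits in each direction, so $G$ has exactly $3$ (resp.\ $2$) edge orbits in all five tilings and $G/H\le\mathrm{Aut}(K)$ yields the stated bounds. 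A second, smaller slip: freeness of the action of $H$ on $\mathbb{R}^2$ does not by itself force $H$ to consist of translations, since glide reflections also act freely; one needs orientability of the torus to exclude them, as the paper notes.

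Your sharpness constructions in (c) are sound and in fact more explicit than the paper's (which takes $K=\langle\alpha_1^5,\beta_1^3\rangle$ and verifies $\mathrm{Nor}(K)=G_1$): for $H=2\mathbb{Z}+3\omega\mathbb{Z}$ one checks that no element of the point group other than $\pm\mathrm{id}$ maps $H$ to itself, and since the normalizing property of an isometry depends only on its linear part this rules out rotations and (glide) reflections about all centers and axes at once; the square case with basis $\{(2,0),(0,1)\}$ works the same way. Just be sure the criterion you verify is ``the only linear parts occurring in $N_{\mathrm{Aut}(E)}(H)$ are $\pm\mathrm{id}$'' (so that, combined with the corrected count of $G$-orbits above, the quotient has exactly $3$, resp.\ $2$, orbits), rather than the weaker statement about preserving parallel classes.
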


We recall some concepts introduced by Coxeter and Moser in \cite{CM1957}. Let $G \leq$Aut($M$) be a discrete group acting on a map $M$ \emph{properly discontinuously} (\cite[Chapter 2]{katok:1992}). This means that each element $g$ of $G$ is associated to a automorphism $h_g$ of $M$ onto itself, in such a way that $h_{gh}$ is always equal to $h_g h_h$ for any two elements $g$ and $h$ of $G$, and $G$-orbit of any edge $e\in E(M)$ is locally finite. Then, there exists $\Gamma \leq $Aut($M$) such that $K = M/\Gamma$. In such a case, $M$ is called a cover of $K$.

A natural question then is: 

\begin{question}\label{ques}
Let $K$ be a edge-homogeneous  map on the torus. Does there exist any map $M (\neq K)$ which is a cover of $K$? Does this cover exist for every sheet, if so, how many? How the edge orbits of $K$ and $M$ are related? 
\end{question}

In this context, we know from \cite{Orbanic2011, siran2001} the following on the torus.

\begin{proposition} \label{datta2020}
If $K$ is a edge-homogeneous toroidal map then there exists a covering $\eta \colon M \to K$ where $M$ is a edge\mbox{-}transitive toroidal map.
\end{proposition}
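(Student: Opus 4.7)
The plan is to combine Proposition \ref{propo-1} with a standard averaging trick on the translation sublattice. Write $K = E/H$, where $E$ is one of the four edge\mbox{-}homogeneous tilings of $\mathbb{R}^2$ of edge\mbox{-}symbol $(3,3;6,6)$, $(4,4;4,4)$, $(6,6;3,3)$ or $(3,6;4,4)$, and $H$ is a discrete subgroup of $\mathrm{Aut}(E)$ acting properly discontinuously. Since $K$ is a torus (closed, orientable, genus~$1$), the action of $H$ is free and orientation\mbox{-}preserving, so every nontrivial element of $H$ is a translation; that is, $H$ is contained in the translation subgroup $T \leq \mathrm{Aut}(E)$, and compactness of $K$ forces $[T:H] < \infty$.

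Next I would introduce the point group $P = \mathrm{Aut}(E)/T$, which is a finite (dihedral) group acting on $T$ by conjugation. Using any set\mbox{-}theoretic lift $P \hookrightarrow \mathrm{Aut}(E)$ (which exists because $\mathrm{Aut}(E)$ splits as a semidirect product $T \rtimes P$ for each of the four tilings), define
\[
H' \;=\; \bigcap_{p \in P} p\cdot H \cdot p^{-1}.
\]
Because $T$ is abelian, this intersection is independent of the chosen lifts, and as a finite intersection of finite\mbox{-}index sublattices of $T$ it is itself a finite\mbox{-}index sublattice of $T$, contained in $H$. By construction $H'$ is stable under conjugation by $P$ and by $T$, hence $H' \triangleleft \mathrm{Aut}(E)$.

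Set $M := E/H'$. Since $H' \leq T$ acts freely by translations, $M$ is again a toroidal map; since $H' \leq H$, the quotient map $\eta\colon M = E/H' \to E/H = K$ is a well\mbox{-}defined covering of finite index $[H:H']$. Normality of $H'$ in $\mathrm{Aut}(E)$ means the quotient $\mathrm{Aut}(E)/H'$ acts as a group of automorphisms of $M$. Because $E$ is edge\mbox{-}homogeneous, $\mathrm{Aut}(E)$ already acts transitively on the edges of $E$ (this is the content of the uniqueness statement of \cite{DU2005, DM2017, DM2018} referenced just before Proposition \ref{propo-1}), and this transitivity descends to $M$. Thus $\mathrm{Aut}(M) \supseteq \mathrm{Aut}(E)/H'$ is edge\mbox{-}transitive on $M$, proving the proposition.

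The only genuine obstacle is guaranteeing that $\mathrm{Aut}(E)$ itself is edge\mbox{-}transitive on $E$, since without this the descended action on $M$ would not be edge\mbox{-}transitive. For the four relevant tilings this is classical, so the averaging argument above carries through uniformly. A secondary point worth checking is orientability of $M$ — one needs $H'$ to avoid glide reflections — but this is automatic since $H' \leq H \leq T$ already consists of translations.
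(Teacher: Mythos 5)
Your argument is sound, but note that the paper itself supplies no proof of Proposition \ref{datta2020}: the statement is imported from \cite{Orbanic2011, siran2001} and invoked as a black box at the very end of the proof of Theorem \ref{thm-main1}. The nearest in-house analogue is the construction inside that proof, where the authors write $K_1=\langle\gamma,\delta\rangle$ with $\gamma=\alpha^a\circ\beta^b$ and $\delta=\alpha^c\circ\beta^d$, guess the sublattice $L_1=\langle\gamma^m,\delta^m\rangle$ with $m=|ad-bc|$, and verify normality under the enlarged group $G_1'$ by solving an explicit $4\times 4$ linear system. Your construction $H'=\bigcap_{p\in P}pHp^{-1}$ reaches the same goal --- a finite-index sublattice of $H$ normalized by the full symmetry group --- without coordinates: well-definedness follows from $pHp^{-1}\le T$ with $T$ abelian, finite index from the fact that a finite intersection of finite-index subgroups again has finite index, and normality holds in all of $\mathrm{Aut}(E)$ rather than merely in one chosen supergroup. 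What the paper's explicit choice buys is quantitative control of the covering degree (Claim \ref{sheet}: $m^2$ sheets), which your abstract intersection does not immediately provide; what your version buys is uniformity over all the tilings and independence from the choice of lifts and generators. Two minor points: you list only four of the five relevant edge symbols, omitting $(4,4;3,6)$ (the dual of $(3,6;4,4)$), though your argument applies to it verbatim; and the crux you correctly flag --- that $\mathrm{Aut}(E)$ acts edge-transitively on each planar edge-homogeneous tiling --- is exactly what Proposition \ref{prop1} combined with the uniqueness statement preceding Proposition \ref{propo-1} delivers, so no gap remains.
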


Here we prove the following. 

\begin{theorem}\label{thm-main1}
{\rm (a)} If $K_1$ is a $m_1$-edge orbital toroidal map of edge-symbol $(m, \ell; u, v) = (3, 3; 6, 6)$, $(3, 6; 4, 4)$ or $(4, 4; 3, 6)$ then there exists a covering $\eta_{k_1} \colon M_{k_1} \to K_1$ where $M_{k_1}$ is $k_1$-orbital for each $k_1 \le m_1$.
{\rm (b)} If $K_2$ is a $m_2$-orbital toroidal map of edge-symbol $(m, \ell; u, v) = (4, 4; 4, 4)$, then there exists a covering $\eta_{k_2} \colon M_{k_2} \to K_2$ where $M_{k_2}$ is $k_2$-orbital for each $k_2 \le m_2$.
\end{theorem}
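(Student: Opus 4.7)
The plan is to use Proposition \ref{propo-1} to realise $K$ as a quotient $E/H$ of the appropriate edge-homogeneous tiling $E$ of $\mathbb{R}^2$ by a discrete translation subgroup $H$. In this picture a covering $M \to K$ corresponds to a finite-index sublattice $H' \leq H$ with $M = E/H'$, and the number of edge orbits of $M$ is exactly the number of orbits of edges of $E$ under the action of $N_{\mathrm{Aut}(E)}(H')$. The strategy is to produce, for every prescribed value $k_j$, a sublattice whose normaliser realises exactly that many orbits on $E$.

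Two of the cases are immediate. For $k_j = m_j$ the trivial cover $M_{m_j} = K$ works, and for $k_j = 1$ the edge-transitive cover supplied by Proposition \ref{datta2020} does the job. Since Theorem \ref{thm:no-of-orbits}(b) forces $m_2 \leq 2$, part (b) is already finished; in part (a) the bound $m_1 \leq 3$ means that the only genuinely new case to handle is the construction of a $2$-orbital cover of a $3$-orbital map $K_1$.

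For this case, since the tilings in question (triangular, trihexagonal, rhombille) each have exactly three direction classes of edges that are permuted transitively by the $3$-fold rotation of $\mathrm{Aut}(E)$, the hypothesis that $K_1$ has three edge orbits forces these orbits to coincide with the three direction classes and forces $N_{\mathrm{Aut}(E)}(H)$ to contain no rotation or reflection that permutes them. I would then set
$$H' \;=\; H \cap \sigma H \sigma^{-1},$$
where $\sigma \in \mathrm{Aut}(E)$ is a reflection that fixes one edge-direction class and swaps the other two. As an intersection of two rank-$2$ lattices in $\mathbb{R}^2$, $H'$ is a finite-index sublattice of $H$, and by construction $\sigma H' \sigma^{-1} = H'$, so $\sigma \in N_{\mathrm{Aut}(E)}(H')$. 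The cover $M_{k_1} = E/H'$ therefore has at most two edge orbits.

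The main obstacle is verifying that $M_{k_1}$ has exactly two orbits and not only one, i.e.\ that no order-$3$ rotation normalises $H'$. The plan is to argue this by a direct coordinate computation: choosing a basis of $E$ aligned with two of the edge directions and writing $H$ in that basis, one transfers the absence of a $3$-fold symmetry of $H$ to its $\sigma$-closed subgroup $H'$. If a pathological $H$ produces an $H'$ that accidentally acquires $3$-fold symmetry, one replaces $H'$ by a further $\sigma$-invariant sublattice (for example, a doubling along the unswapped direction) which preserves the two-orbit property while destroying the spurious rotation. Performing this construction for each of the three possible choices of $\sigma$ yields the $2$-orbital covers and completes the proof, together with the extremal cases treated above.
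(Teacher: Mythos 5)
Your reduction of the problem is sound and matches the paper's: the only non-trivial case is producing a $2$-orbital cover of a $3$-orbital map (the $k=m$ case is the trivial cover, $k=1$ is Proposition \ref{datta2020}, and Theorem \ref{thm:no-of-orbits} disposes of part (b)). But your route through that case is genuinely different from the paper's. The paper writes $K_1=\langle\gamma,\delta\rangle$ with $\gamma=\alpha^a\beta^b$, $\delta=\alpha^c\beta^d$, takes $L_1=\langle\gamma^m,\delta^m\rangle$ with $m=|ad-bc|$, verifies $L_1\trianglelefteq G_1'=\langle\alpha,\beta,\chi,\rho\rangle$ by solving an explicit linear system, and then proves the cover has \emph{exactly} two orbits by a descent argument: any automorphism of $M_1/L_1$ induces one of $X$, which cannot merge the three $G_1/K_1$-orbits because $X$ is $3$-orbital, and this lifts back (Claims \ref{diag}, \ref{clm5}). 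You instead take $H'=H\cap\sigma H\sigma^{-1}$, which is automatically $\sigma$-invariant, and try to certify ``exactly two'' by a direct lattice computation.

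The gap is in that last step. Your assertion that one can ``transfer the absence of a $3$-fold symmetry of $H$ to its $\sigma$-closed subgroup $H'$'' is false in general, and the failure is not pathological but typical: for the paper's own witness of a $3$-orbital map, $H=\langle 5A,3B\rangle$ in the hexagonal lattice $\Lambda=\ZZ A+\ZZ B$ with $\sigma$ swapping $A$ and $B$, one computes $H'=H\cap\sigma H\sigma^{-1}=15\Lambda$, which is invariant under the full point group, so $E/H'$ is edge-transitive rather than $2$-orbital. Thus your ``fallback'' is carrying the entire weight of the argument, and as written it is only asserted. It can be repaired: an order-$3$ rotation $\rho_3$ acts on $H'/2H'\cong(\ZZ/2)^2$ by cyclically permuting the three nonzero classes, so \emph{no} index-$2$ sublattice of a $\rho_3$-invariant lattice is $\rho_3$-invariant, while exactly one of the three is $\sigma$-invariant; passing to that sublattice kills every order-$3$ linear part in the normaliser while retaining $\sigma$, $\chi$ and all translations. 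You also need (and do not state) the observation that excluding order-$3$ rotations suffices to exclude \emph{any} further orbit collapse, since any point-group element sending the $\sigma$-fixed edge-direction class to a swapped one composes with $\sigma$ to give an order-$3$ rotation. With those two verifications supplied, your construction works and is arguably shorter than the paper's; without them, the proof is incomplete at precisely the step the paper spends Claims \ref{diag}--\ref{clm5} on.
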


%\begin{theorem}\label{thm-main2}
%Let $K$ be a edge-homogeneous toroidal map. Then, there exists a $n$ sheeted covering $\eta \colon M \to K$ for each $n \in \mathbb{N}$.
%\end{theorem}

%\begin{theorem}\label{thm-main3}
%Let $K$ be a $n$ sheeted edge-homogeneous toroidal map and $\sigma(n) = \sum_{d|n}d$. Then, there exists different $n$ sheeted  covering $\eta_{\ell} \colon M_{\ell} \to K$ for $ \ell \in \{1, 2, \dots, \sigma(n)\}$, i.e., $M_{1},$ $M_{2},$ $\dots,$ $M_{\sigma(n)}$ are $n$ sheeted covers of $K$ and different upto isomorphism.
%\end{theorem}

%\begin{theorem}\label{thm-main4}
%Let $K$ be a $m$-orbital edge-homogeneous toroidal map and $M$ be a $k$-orbital covers of $K$. Then, there exists a $k$-orbital covering map $\eta \colon W \to K$ such that $W$ is minimal.
%\end{theorem}

We extend the above results for non-edge-homogeneous semi-equivelar toroidal maps. 

\begin{theorem} \label{edge-no-of-orbits}
Let $X$ be a non-edge-homogeneous semi-equivelar map on the torus. Let the edges of $X$ form $m$ ${\rm Aut}(X)$-orbits. 
{\rm (a)} If the type of $X$ is  $[3^1, 12^2]$,$[3^2, 4^1, 3^1, 4^1]$ or $[3^1, 4^1, 6^1, 4^1]$  then $m \leq 6$.
{\rm (b)} If the type of $X$ is  $[4^1, 8^2]$  then $m \leq 4$.
{\rm (c)} If the type of $X$ is  $[3^4, 6^1]$  then $m \leq 8$.
{\rm (d)} If the type of $X$ is  $[3^3, 4^2]$  then $m = 3$.
{\rm (e)} If the type of $X$ is  $[4^1,6^1, 12^1]$  then $m \leq 12$.
These bounds are also sharp. 
\end{theorem}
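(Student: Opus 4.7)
The plan is to reduce to a quotient analysis and to bound the number of edge orbits by a subgroup of the planar cover's symmetries that always descends to any toroidal quotient.

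By the semi-equivelar extension of Proposition~\ref{propo-1} (as developed in \cite{DM2018}), every non-edge-homogeneous semi-equivelar map $X$ on the torus of type $\tau$ is of the form $X = T/L$, where $T$ is the unique planar Archimedean tiling of type $\tau$ and $L \le \Lambda := {\rm Trans}(T)$ is a full-rank translation sublattice. Consequently ${\rm Aut}(X) = N_{{\rm Aut}(T)}(L)/L$, and $m$ equals the number of orbits of $N_{{\rm Aut}(T)}(L)$ acting on $E(T)$. The strategy is to identify a subgroup $G_0 \le {\rm Aut}(T)$ contained in $N_{{\rm Aut}(T)}(L)$ for every such $L$, and to count its orbits on $E(T)$; this gives an upper bound on $m$.

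A basic choice of $G_0$ includes $\Lambda$ together with every half-turn in ${\rm Aut}(T)$: conjugation by a half-turn $R_p$ sends $T_l$ to $T_{-l}$, and since every sublattice is closed under inversion, $R_p$ normalizes every $L$. For some tiling types one can enlarge $G_0$ by additional elements of the point group (certain reflections or higher-order rotations) that preserve every sublattice because of constraints imposed by the Bravais lattice type, and this typically accounts for the sharper bounds. The orbit count is then carried out case by case: fix a fundamental domain $D$ of $\Lambda$, enumerate the edges of $D$ classified by the pair of face-sizes they separate, and determine which edges are fixed by the half-turn (equivalently, which have midpoints in the coset of $\Lambda/2$ containing the two-fold centres of $T$) and which pairs of edge orbits are swapped by $G_0$. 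The finite bookkeeping yields, for each of the seven non-edge-homogeneous Archimedean tilings, exactly the upper bound stated in the theorem.

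Sharpness in each case is obtained by exhibiting an explicit translation sublattice $L$ of sufficiently large index and asymmetric shape for which $N_{{\rm Aut}(T)}(L) = G_0$ exactly, so that the toroidal quotient $T/L$ realizes the claimed upper bound. The main obstacle lies in the combinatorial bookkeeping for the larger and less symmetric tilings: the chiral $[3^4, 6^1]$ (no reflections in the point group, fundamental domain with $15$ edges), $[4^1, 6^1, 12^1]$ (largest fundamental domain with $18$ edges and two-fold centres at both edge midpoints and face centres), and $[3^3, 4^2]$, where the tight assertion $m = 3$ requires showing that $G_0$ properly exceeds $\Lambda\langle -I\rangle$ by incorporating a specific reflection whose action on $\Lambda$ preserves every sublattice thanks to the centered-rectangular (cmm) Bravais structure of the elongated triangular tiling.
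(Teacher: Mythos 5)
Your reduction and your upper-bound mechanism coincide with the paper's: write $X=E/K$ via Proposition~\ref{propo-1}, note ${\rm Aut}(X)={\rm Nor}_{{\rm Aut}(E)}(K)/K$, observe that the group $G=\langle \Lambda,\chi\rangle$ (translations together with the half-turn) normalizes \emph{every} translation sublattice because $-l\in L$ for all $l\in L$, and bound $m$ by the number of $G$-orbits on $E(E)$, counted case by case; this is exactly the paper's $G_i=\langle\alpha_i,\beta_i,\chi_i\rangle$ argument, and the counts $6,6,6,4,8,3,12$ do come out of $\langle\Lambda,-I\rangle$ alone. Your sharpness route (exhibit an $L$ with ${\rm Nor}(L)=G$ exactly) differs from the paper, which instead passes to the associated equivelar map $X^{\#}$ (Lemmas~\ref{X-9}--\ref{X-10}) and quotes the sharp examples of Theorem~\ref{thm:no-of-orbits}(c); your route is legitimate and is in fact the method the paper itself uses to prove Theorem~\ref{thm:no-of-orbits}(c).

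There are, however, two genuine problems in the argument as you have written it. First, your proposed enlargement of $G_0$ by ``certain reflections or higher-order rotations that preserve every sublattice'' is based on a false principle: no element of the point group other than $\pm I$ normalizes every finite-index sublattice $L\le\Lambda$ (e.g.\ for $\Lambda=\ZZ^2$ and the reflection $\sigma(x,y)=(x,-y)$, the sublattice $L=\langle(1,2),(0,5)\rangle$ satisfies $\sigma(1,2)=(1,-2)\notin L$), and no Bravais-type constraint rescues this. You explicitly lean on this enlargement for the ``sharper bounds'' and in particular for $[3^3,4^2]$; as it happens it is unnecessary there, since $\langle\Lambda,-I\rangle$ already gives exactly $3$ orbits on $E(E_4)$ (one orbit each of the $(4,4)$-, $(3,4)$- and $(3,3)$-edges, the half-turn fusing the two translation classes of horizontals and the two diagonal orientations), but the step as stated would not survive scrutiny and must be removed. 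Second, part (d) asserts the \emph{equality} $m=3$ for every map of type $[3^3,4^2]$, and your sharpness scheme (one well-chosen $L$ attaining the bound) only shows the bound is attained; you still need the universal lower bound $m\ge 3$, which follows immediately from the fact that the pair of face sizes flanking an edge is an ${\rm Aut}(X)$-invariant taking the three values $(3,3),(3,4),(4,4)$ (equivalently, as the paper argues, ${\rm Aut}(E_4)$ itself has $3$ edge orbits, so every subgroup ${\rm Nor}(K)$ has at least $3$). You have the ingredient --- you classify edges by flanking face sizes --- but you never draw this conclusion, and without it part (d) is not proved.
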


\begin{theorem}\label{edge-thm-main1}
    {\rm (a)} Let $X$ is a $m$-edge orbital toroidal map of type $[3^1, 12^2]$ or $[3^1,4^1,6^1,4^1]$. Then, there exists a covering $\eta_{k} \colon Y_{k} \to X$ where $Y_{k}$ is $k$-edge orbital for each $k \le m$ such that $2$ divides $k$.\\
    {\rm (b)} If $X_5$ is a $6$-edge orbital toroidal map of type $[3^2,4^1,3^1,4^1]$ then there exists a covering $\eta_{k_5} \colon Y_{k_5} \to X_5$ where $Y_{k_5}$ is $3$-edge orbital.\\
    {\rm (c)} If $X_{10}$ is a $8$-edge orbital toroidal map of type $[3^4,6^1]$ then there exists a covering $\eta_{k_{10}} \colon Y_{k_{10}} \to X_{10}$ where $Y_{k_{10}}$ is $2$-edge orbital.\\
    {\rm (d)} If $X_9$ is a $m_9$-edge orbital toroidal map of type $[4^1,6^1, 12^1]$ then there exists a covering $\eta_{k_9} \colon Y_{k_9} \to X_9$ where $Y_{k_9}$ is $k_9$-edge orbital for $(k_9,m_9)=(6,12),(3,6)$.\\
    {\rm (e)} If $X_6$ is a $m_6$-edge orbital toroidal map of type $[4^1,8^2]$ then there exists a covering $\eta_{k_6} \colon Y_{k_6} \to X_6$ where $Y_{k_6}$ is $k_6$-edge orbital for $(k_6,m_6)=(3,4),(2,3)$.
\end{theorem}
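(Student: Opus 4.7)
The plan is to realize each cover as a quotient of the universal planar tiling by a carefully chosen sublattice. Write $X = \mathcal{E}/\Gamma$, where $\mathcal{E}$ is the unique (up to isomorphism, cf.\ \cite{DM2018}) semi-equivelar tiling of $\mathbb{R}^2$ of the stated type and $\Gamma$ is a rank-$2$ translation sublattice of the full plane symmetry group $G$ of $\mathcal{E}$. Then any finite cover $Y \to X$ of the same type has the form $Y = \mathcal{E}/\Gamma'$ for some finite-index $\Gamma' \le \Gamma$, with $\mathrm{Aut}(Y) = N_G(\Gamma')/\Gamma'$; under this identification the $\mathrm{Aut}(Y)$-orbits of edges of $Y$ are precisely the $N_G(\Gamma')$-orbits of edges of $\mathcal{E}$.

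The uniform device is as follows. Fix a subgroup $H \le G$ containing the full translation lattice $\Gamma_0$ of $G$, and set $\Gamma'_H := \bigcap_{h\Gamma_0 \in H/\Gamma_0} h\Gamma h^{-1}$; this is a finite-index $H$-stable sublattice of $\Gamma$, so $Y_H := \mathcal{E}/\Gamma'_H \to X$ is a finite cover with $H/\Gamma'_H \hookrightarrow \mathrm{Aut}(Y_H)$, and the edge-orbit count of $Y_H$ is at most the number of $H$-orbits on edges of $\mathcal{E}$. Moreover, the edges of $\mathcal{E}$ are partitioned into classes by the unordered pair of adjacent face-sizes, and this partition is $G$-invariant; hence the edge-orbit count of every cover is a sum of contributions, one from each class, which is the structural source of the parity/divisibility constraints in (a), (d), (e).

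With this dictionary, each part is a finite case analysis. For (a), the types $[3^1,12^2]$ and $[3^1,4^1,6^1,4^1]$ each have two face-size classes on which the point group $G/\Gamma_0$ acts in pairs, so any even $k \le m$ is realized by choosing $H$ to be an appropriate intermediate rotational subgroup between $\Gamma_0$ and $G$. Parts (d) and (e) are analogous for $[4^1,6^1,12^1]$ (three face-size classes, orbit sizes dividing $12$) and $[4^1,8^2]$ (two classes, orbit sizes dividing $8$), producing the specific pairs listed. For (b) and (c) one takes $H = G$: a direct fundamental-domain computation under the full point group of $\mathcal{E}$ gives $G$-orbit counts of $3$ and $2$ for $[3^2,4^1,3^1,4^1]$ and $[3^4,6^1]$ respectively, and the corresponding $Y_G$ is the desired cover.

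The main obstacle will be the non-reflective types $[3^4,6^1]$ and $[3^2,4^1,3^1,4^1]$ (parts (b), (c)), whose point groups are purely rotational ($p6$ and $p4$), so the $G$-orbits of edges must be traced by hand rather than deduced from reflection symmetries; one must also verify that the $G$-stable sublattice $\Gamma'_G$ built from an arbitrary $\Gamma$ does indeed have finite index and produces no unwanted identifications beyond those coming from $G$. Once these are in place, the claimed covers in all five parts are exhibited by the $Y_H$ construction above.
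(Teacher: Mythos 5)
Your overall strategy---realize $X=\mathcal{E}/\Gamma$, enlarge the point group to a subgroup $H$ with the right number of edge orbits on $\mathcal{E}$, and pass to an $H$-stable finite-index sublattice $\Gamma'_H\le\Gamma$---is the same skeleton the paper uses for parts (b)--(e), and your intersection-of-conjugates construction of $\Gamma'_H$ is a cleaner route to the $H$-stable sublattice than the paper's explicit computation (which takes $\langle\gamma^m,\delta^m\rangle$ with $m=|ad-bc|$). For part (a) you diverge: the paper does not argue directly on $\mathcal{E}$ but reduces to the associated equivelar map $X^\#$ via Lemmas \ref{X-8} and \ref{X-9} (each edge orbit of $X^\#$ corresponds to exactly two edge orbits of $X$) and then quotes Theorem \ref{thm-main1}; your face-size-class decomposition is the same phenomenon seen from the planar side and would work, though it needs the case-by-case orbit counts actually carried out.

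The genuine gap is that your construction only yields an \emph{upper} bound on the number of edge orbits of $Y_H$. You correctly note that $\mathrm{Aut}(Y_H)=N_G(\Gamma'_H)/\Gamma'_H$ and that $H\le N_G(\Gamma'_H)$, so the orbit count of $Y_H$ is \emph{at most} the number of $H$-orbits on $E(\mathcal{E})$; but the theorem asserts that $Y_k$ is exactly $k$-edge orbital, and nothing in your argument rules out $N_G(\Gamma'_H)\supsetneq H$ merging orbits further. (In degenerate cases, e.g.\ when $\Gamma$ is already stable under the full point group, $\Gamma'_G=\Gamma$ and the ``cover'' collapses.) The paper closes exactly this hole in Claims \ref{normal}--\ref{clm5}: it shows $K/L\trianglelefteq\mathrm{Aut}(\mathcal{E}/L)$, so every automorphism $\alpha$ of the cover descends to an automorphism $\widetilde{\alpha}$ of $X$, and since $X$ is by hypothesis \emph{exactly} $m$-orbital, $\widetilde{\alpha}$ cannot fuse the downstairs orbits, whence $\alpha$ cannot fuse the corresponding orbits upstairs. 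Your proposal never uses the hypothesis that $X$ is exactly $m$-orbital, which is a sign the lower bound is missing. A secondary slip: the snub square type $[3^2,4^1,3^1,4^1]$ is not purely rotational (its wallpaper group is $p4g$; the paper explicitly uses the reflection $\rho_5$ to cut the orbit count from $6$ to $3$), so the case analysis for part (b) as you describe it would start from the wrong point group, even though the final count of $3$ happens to agree.
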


\begin{theorem}\label{thm-main2}
Let $X$ be a semi\mbox{-}equivelar toroidal map or $X$ be of type $(m, \ell; u, v) = (4, 4; 3, 6)$. Let $X$ be $k$-edge orbital. Then, there exists a $n$ sheeted covering $\eta \colon Y \to X$ for each $n \in \mathbb{N}$ where $Y$ is $m$-edge orbital for some $m\le k$.
\end{theorem}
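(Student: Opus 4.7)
The plan is to use the lattice representation $X = \tilde{X}/\Lambda$ afforded by Prop.~\ref{propo-1} and its semi-equivelar extension from \cite{DM2018} (which also handles the type $(4,4;3,6)$), where $\tilde{X}$ is the corresponding tiling of $\mathbb{R}^2$ and $\Lambda \le G := \mathrm{Aut}(\tilde{X})$ is a rank-$2$ discrete translation subgroup. Under this representation $\mathrm{Aut}(X) = N_G(\Lambda)/\Lambda$, and the $k$ edge orbits of $X$ biject with the $N_G(\Lambda)$-orbits on the edges of $\tilde{X}$; more generally, for any sublattice $\Lambda' \le \Lambda$ of finite index, the $\mathrm{Aut}(\tilde{X}/\Lambda')$-orbits on edges of $\tilde{X}/\Lambda'$ biject with the $N_G(\Lambda')$-orbits on edges of $\tilde{X}$.

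For each $n \in \mathbb{N}$ the goal is to exhibit a sublattice $\Lambda_n \le \Lambda$ with $[\Lambda : \Lambda_n] = n$ such that $N_G(\Lambda_n)$ acts on the edges of $\tilde{X}$ with at most $k$ orbits; then $Y := \tilde{X}/\Lambda_n$ is an $n$-sheeted cover with at most $k$ edge orbits. Fix a basis $\{v_1, v_2\}$ of $\Lambda$ adapted to the finite point group $G_X$ of $\mathrm{Aut}(X)$ acting on $\Lambda$. When $G_X$ is trivial or $\{\pm 1\}$, every sublattice of $\Lambda$ is $G_X$-invariant and one takes $\Lambda_n := \langle v_1, n v_2 \rangle$, so that $Y$ inherits the full point group of $X$ and has $\le k$ edge orbits. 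When $G_X$ contains a reflection or rotation that merges edge classes of $X$, one takes $\Lambda_n$ preserved by that element: for instance, when a diagonal reflection $v_1 \leftrightarrow v_2$ suffices, use $\Lambda_n = \langle a v_1 + b v_2,\, b v_1 + a v_2 \rangle$ with odd $n = a^2 - b^2$ ($a = (n+1)/2$, $b = (n-1)/2$) or $\Lambda_n = \langle v_1 + v_2,\, c(v_1 - v_2) \rangle$ for even $n = 2c$; both are reflection-invariant, and these identities realize every $n \in \mathbb{N}$. Analogous constructions, case by case across the eleven semi-equivelar types (plus $(4,4;3,6)$), handle the remaining cases; for $[3^3, 4^2]$, where Thm.~\ref{edge-no-of-orbits}(d) already gives $k = 3$ uniformly and every cover is of the same type with exactly $3$ edge orbits, the claim is automatic.

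The main obstacle is the case-by-case verification that the above $\Lambda_n$ is indeed a sublattice of $\Lambda$ of the claimed index, that it is stabilized by the chosen element of $G_X$, and that this element in turn identifies all $k$ edge classes of $X$. The most delicate cases are those with full $D_4$ or $D_6$ point group, where some values of $n$ (e.g.\ $n$ not a sum of two squares for $[4^4]$, or not a norm in $\mathbb{Z}[\omega]$ for $[3^6]$) are not achievable as the index of a fully-$G_X$-invariant sublattice; for these one must argue that the partial-symmetry sublattice constructed above---invariant only under a proper subgroup $H \le G_X$ still acting on $\tilde{X}$ with at most $k$ orbits on edges---suffices to control the edge orbit count of $Y$. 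Once this verification is carried out type by type using the explicit orbit data of Thm.~\ref{thm:no-of-orbits} and Thm.~\ref{edge-no-of-orbits}, the theorem follows.
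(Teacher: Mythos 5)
Your proposal is built on the right representation ($X=\tilde X/\Lambda$, covers correspond to finite-index sublattices, $\mathrm{Aut}$ of a quotient is the normalizer modulo the lattice), but it takes a much heavier route than the paper and, crucially, does not close the case it itself identifies as the hard one. The paper's proof of Theorem \ref{thm-main2} is three lines: write $K=\langle\gamma,\delta\rangle$, set $L_n=\langle\gamma^n,\delta\rangle$, note $[K:L_n]=n$, and stop; the orbit bound $m\le k$ is not argued there at all but is delegated to Lemma \ref{orbb}, which asserts that a cover can never have more edge orbits than its base. So where you try to engineer point-group invariance of $\Lambda_n$ by hand, type by type, the paper claims no engineering is needed.

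The genuine gap in your write-up is the step you flag and then leave open: for the $D_4$ and $D_6$ point groups you concede that not every $n$ arises as the index of a fully $G_X$-invariant sublattice, and you say one ``must argue'' that a partially invariant sublattice still yields at most $k$ orbits --- but no such argument is given, and in fact none exists in general. Concretely, let $E$ be the $[3^6]$ tiling and $X=E/K$ with $K$ the full translation lattice, so $k=1$. Every index-$2$ sublattice of $K$ contains $2K$, and the $120^\circ$ rotation acts on $K/2K\cong(\ZZ/2)^2$ by a matrix whose characteristic polynomial $\lambda^2+\lambda+1$ is irreducible over $\mathbb{F}_2$; hence it permutes the three index-$2$ sublattices cyclically and stabilizes none. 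Since translations, the point reflection, and any single axis reflection preserve or merely transpose the three edge directions of $E$, the normalizer of any index-$2$ sublattice has at least two edge orbits, so no $2$-sheeted cover of this $X$ is edge-transitive and the conclusion ``$m\le k$'' fails for $n=2$. Note that Lemma \ref{orbb} cannot rescue you here: its proof presupposes a surjection $\mathrm{Aut}(Y)\to\mathrm{Aut}(X)$ along which the orbit-merging automorphism of $X$ lifts, and the rotation above is precisely an automorphism that does not lift. Your instinct that the $D_4/D_6$ cases are obstructed is therefore sound, but the proposal neither completes those cases nor observes that the statement itself must be weakened there; as written it is an outline with its decisive step missing, not a proof.
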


\begin{theorem}\label{thm-main3}
Let $X$ be a $n$ sheeted semi\mbox{-}equivelar or of  type $(m, \ell; u, v) = (4, 4; 3, 6)$ $k$-edge orbital toroidal map and $\sigma(n) = \sum_{d|n}d$. Then, there exists different $n$ sheeted $m$-edge orbital covering $\eta_{\ell} \colon Y_{\ell} \to X$ for $ \ell \in \{1, 2, \dots, \sigma(n)\}$, i.e., $Y_{1},$ $Y_{2},$ $\dots,$ $Y_{\sigma(n)}$ are $n$ sheeted $m$-edge orbital covers of $X$ and different upto isomorphism for some $m\le k$.
\end{theorem}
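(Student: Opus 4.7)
My plan is to realise $n$-sheeted covers of $X$ as quotients of the universal planar tiling by index-$n$ sublattices of its translation lattice, count these sublattices via Hermite normal form, and then control the edge orbits of each cover through deck transformations and lifts of $\mathrm{Aut}(X)$.

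By Proposition \ref{propo-1} together with its extension to the non-edge-homogeneous semi-equivelar and $(4,4;3,6)$ settings developed in the preceding theorems of this paper, I write $X = E/\Gamma_X$, where $E$ is the corresponding planar tiling and $\Gamma_X$ is a rank-two discrete subgroup of translations of $\mathrm{Aut}(E)$. Simply-connectedness of $E$ identifies connected $n$-sheeted toroidal covers of $X$ with index-$n$ sublattices $\Gamma' \le \Gamma_X$ via $Y = E/\Gamma'$. A standard Hermite normal form argument then shows that every index-$n$ sublattice of $\Gamma_X \cong \mathbb{Z}^2$ has a unique basis given by the columns of
\[
\begin{pmatrix} a & 0 \\ b & d \end{pmatrix}, \qquad ad = n, \quad 0 \le b < a,
\]
so the total count is $\sum_{d \mid n}(n/d) = \sigma(n)$. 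I label the sublattices $\Gamma_1, \dots, \Gamma_{\sigma(n)}$ and set $Y_\ell := E/\Gamma_\ell$, with $\eta_\ell$ the induced covering; distinctness of the subgroups $\Gamma_\ell$ makes the $\eta_\ell$ pairwise inequivalent as covers of $X$.

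For the edge-orbit bound, since $\Gamma_\ell$ is normal in $\Gamma_X$ (abelianity), the quotient $\Gamma_X/\Gamma_\ell$ acts on $Y_\ell$ as a fibre-transitive deck-transformation group, and every element of $\mathrm{Aut}(X)$ whose representative in $\mathrm{Aut}(E)$ normalises $\Gamma_\ell$ descends to $\mathrm{Aut}(Y_\ell)$. Combining these, any two edges of $Y_\ell$ lying over a common $\mathrm{Aut}(X)$-orbit of edges in $X$ are identified by some element of $\mathrm{Aut}(Y_\ell)$; hence $Y_\ell$ has at most $k$ edge orbits, giving the desired $m \le k$.

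The main obstacle is the stronger non-isomorphism assertion: two distinct sublattices $\Gamma_\ell \ne \Gamma_{\ell'}$ could still yield isomorphic polyhedral maps if some non-translational symmetry of $E$ conjugates one to the other. Ruling this out requires analysing the action of the finite point group $\mathrm{Aut}(E)/\Gamma_X$ on the set of index-$n$ sublattices of $\Gamma_X$, and verifying, type-by-type along the lines of Theorem \ref{edge-no-of-orbits}, that the $\sigma(n)$ Hermite representatives lie in distinct point-group orbits. This is the step that carries the bulk of the casework.
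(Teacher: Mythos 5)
Your overall strategy — classify index-$n$ sublattices of the translation lattice by Hermite normal form to get the count $\sigma(n)$, then worry about whether distinct sublattices can give isomorphic maps — is exactly the paper's strategy. But your proposal stops precisely where the theorem's content lies. You write that ruling out an isomorphism between two distinct Hermite representatives ``requires analysing the action of the finite point group\dots and verifying, type-by-type\dots that the $\sigma(n)$ Hermite representatives lie in distinct point-group orbits,'' and you leave that verification undone. That verification \emph{is} the proof of the ``different up to isomorphism'' clause; without it you have only shown that there are at most $\sigma(n)$ classes (equality of maps corresponds to right multiplication by a unimodular matrix), not that there are exactly $\sigma(n)$ non-isomorphic ones. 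Note also that ``distinctness of the subgroups $\Gamma_\ell$ makes the $\eta_\ell$ pairwise inequivalent as covers'' only gives inequivalence as covering projections; the theorem asserts non-isomorphism as maps, which is strictly stronger since an isomorphism need not commute with the projections.

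The missing step is carried out in the paper as follows, and it is shorter than you fear: isomorphism of $Y_1$ and $Y_2$ is shown to be equivalent to $M_1 = AM_2B$ with $A$ in the point group $G_0$ and $B\in GL(2,\ZZ)$; since all matrices involved have determinant $n$, $B$ is unimodular and can be absorbed into the Hermite normalisation, so one only needs to analyse $M_1 = AM_2$ with both $M_i$ in Hermite form. A direct computation forces $A=\begin{pmatrix} d_2/d_1 & 0\\ (d_2c_1-d_1c_2)/n & d_1/d_2\end{pmatrix}$, whence integrality and positivity give $d_1=d_2$ and $A$ is unipotent lower triangular; one then checks for each relevant $G_0$ (namely $D_6$, $D_4$, or $\ZZ_6$ in the paper's matrix presentations) that the identity is the only such element, so every isomorphism class is a singleton. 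There is no genuine type-by-type casework on the sublattices themselves — only this one uniform matrix computation plus the short check on generators of $G_0$. Your edge-orbit paragraph is also looser than needed: the bound $m\le k$ for covers is what the paper isolates as Lemma \ref{orbb}, proved by a pigeonhole argument on lifted automorphisms, and your assertion that any two edges of $Y_\ell$ over a common $\mathrm{Aut}(X)$-orbit are $\mathrm{Aut}(Y_\ell)$-equivalent would need that lifting argument to be justified.
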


\begin{theorem}\label{thm-main4}
Let $X$ be a $m$-edge orbital semi\mbox{-}equivelar toroidal map $($or of type $(m, \ell; u, v)$ $= (4, 4; 3, 6))$ and $Y$ be a $k$-edge orbital covers of $X$ $($or of type $(m, \ell; u, v) = (4, 4; 3, 6))$. Then, there exists a $k$-edge orbital covering map $\eta \colon Z \to X$ such that $Z$ is minimal.
\end{theorem}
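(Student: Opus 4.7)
The plan is to express every candidate cover as a quotient of a universal planar tiling and then extract a minimum via the well-ordering of $\mathbb{N}$. First, using Proposition \ref{propo-1} (which handles the edge-homogeneous case, including $(4,4;3,6)$) together with its counterparts for non-edge-homogeneous semi-equivelar types from \cite{DM2017, DM2018}, I would write $X = E/\Gamma_X$, where $E$ is the planar tiling of the same type as $X$ and $\Gamma_X \leq \mathrm{Aut}(E)$ is a torsion-free, cocompact, discrete subgroup. Every toroidal polyhedral cover of $X$ is again of the same type as $X$, so by the universal property of $E$ it too lifts to $E$; hence any finite cover $Y$ of $X$ can be written as $Y = E/\Gamma_Y$ for some finite-index subgroup $\Gamma_Y \leq \Gamma_X$, with $|E(Y)| = [\Gamma_X : \Gamma_Y]\cdot |E(X)|$. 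The hypothesis of the theorem furnishes one such $\Gamma_Y$.

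Next, I would run a short well-ordering argument. Form
\[
\mathcal{F} = \{\Gamma \leq \Gamma_X : [\Gamma_X : \Gamma] < \infty \text{ and } E/\Gamma \text{ is } k\text{-edge orbital}\},
\]
which is nonempty because $\Gamma_Y \in \mathcal{F}$. The set of indices $S = \{[\Gamma_X : \Gamma] : \Gamma \in \mathcal{F}\} \subseteq \mathbb{N}$ is nonempty and therefore admits a least element $n_0$. Choosing any $\Gamma_Z \in \mathcal{F}$ with $[\Gamma_X : \Gamma_Z] = n_0$ and setting $Z := E/\Gamma_Z$, the inclusion $\Gamma_Z \leq \Gamma_X$ yields a covering $\eta \colon Z \to X$ of degree $n_0$. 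By construction $Z$ is $k$-edge orbital, and any other $k$-edge orbital cover $Z' = E/\Gamma'$ of $X$ satisfies $|E(Z')| = [\Gamma_X : \Gamma']\cdot |E(X)| \geq n_0 \cdot |E(X)| = |E(Z)|$, so $Z$ realises the minimum edge count in the sense of the abstract.

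The step I expect to cost the most work is the first one: confirming that \emph{every} $k$-edge orbital cover of $X$ arises as $E/\Gamma'$ with $\Gamma' \leq \Gamma_X$. For the edge-homogeneous part this is exactly Proposition \ref{propo-1}; for the remaining semi-equivelar types one combines the corresponding structure theorem from \cite{DM2017, DM2018} with the uniqueness (up to conjugation in $\mathrm{Aut}(E)$) of the lift of $Y$ to $E$, which places $\Gamma_Y$ inside a fixed copy of $\Gamma_X$. Once this identification is in place the well-ordering step is a single-line argument, and the minimality assertion follows by simply reading off the covering degree, with no further case analysis needed across the types listed in Theorems \ref{thm:no-of-orbits} and \ref{edge-no-of-orbits}.
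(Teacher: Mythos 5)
Your proposal is correct and is essentially the paper's argument: the paper also reduces every cover of $X$ to a quotient $E/\Gamma'$ with $\Gamma'$ of finite index in $\Gamma_X$ and then selects a $k$-edge orbital cover with the fewest sheets, phrased as an iterative descent through covers of strictly decreasing sheet number rather than as a single appeal to the well-ordering of the index set. Your packaging via the least element of $S=\{[\Gamma_X:\Gamma]:\Gamma\in\mathcal{F}\}$ is just a cleaner way of saying that this descent terminates, so no substantive difference.
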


\section{Edge-homogeneous and non-edge-homogeneous tilings of $\mathbb{R}^2$}

\begin{example}
We first present five edge-homogeneous tilings on the plane.%, see Figures bellow. We need these for the proofs of main theorems. We denote these tillings by $M_i$ for $1\le i \le 5$. 
\end{example}

 \vspace{-0.5cm}
\begin{figure}[H]
\centering
\begin{minipage}[b][5cm][s]{.45\textwidth}
\centering
\vfill
% [inline block 0: 17 envs, 109669 chars -> data_tex | \begin{tikzpicture}[scale = 0.5] 	...]

\vfill
\end{minipage}
\vg\vg\vg\vg\vg\vg\vg\vg
\caption{Edge-homogeneous maps on the torus}\label{edge:homo}
\end{figure}

\section{Toroidal maps and classification of their edge covers}
Let $\alpha_i, \beta_i$ be two fundamental translations of $M_i$ (in Fig. \ref{fig:eh}) defined by $\alpha_i : z \mapsto z + A_i$ and $\beta_i : z \mapsto z+ B_i$ where $A_i$ and $B_i$ are indicated in i-th figure of Figure \ref{fig:eh}. Let $\rho_i$ be the function on $M_i$ obtained by taking reflection of $M_i$ with respect to the line passing through the origin $O$ and $A$ for $1\le i \le 5$. Let $\chi_i$ be the map obtained by taking reflection of $M_i$ about origin. Let $G_i = \langle \alpha_i,\beta_i,\chi_i \rangle$. 
Let $H_i$ be the translation group generated by $\alpha_i$ and $\beta_i$. So $H_i \le {\rm Aut}(M_i)$. Let for a given map $X$, $E(X)$ denotes the set of all edges of $X$.

\begin{proof}[Proof of Theorem \ref{thm:no-of-orbits}]
Let $X$ be an edge-homogeneous toroidal map with edge symbol $[3,3;$ $6,6]$. Then, by Proposition \ref{propo-1} we can assume that $X=M_1/K_1$ for some subgroup $K_1$ of Aut($M_1$), and $K_1$ has no fixed points. Thus $K_1$ contains only translations and glide reflections. Since $X$ is orientable $K_1$ does not contain any glide reflections.   Let $\eta:M_1\to X$  be the polyhedral map.
Therefore $K_1\le H_1$. Under action of $G_1$, $E(M_1)$ has $3$ orbits.  They are represented by $3$ edges of any triangle in $M_1$. Since, $K_1 \trianglelefteq G_1$ so $G_1/K_1$ is a group. Therefore $G_1/K_1$ acts on $E(X)$. If $O_1, O_2, O_3$ are the $G_1$ orbits of $E(M_1)$ it follows that $\eta(O_1), \eta(O_2), \eta(O_3)$ are the $G_1/K_1$ orbits of $E(X)$. Since, $G_1/K_1 \le {\rm Aut}(X)$ it follows that number of Aut($X$) orbits of $E(X)$ is less than or equal to $3$. In the same way one can see that number of Aut($X$) orbits the maps with edge symbol $(6,6;3,3)$ or $(4,4; 3,6)$ is also less than or equal to $3$.\\
Now suppose $X$ is an edge homogeneous toroidal map with edge symbol $(3,6; 4,4)$. By same argument as before we have  $X=M_3/K_3$ and $K_3 \le H_3$. Observe that under the action of $H_3$ $E(M_3)$ forms $6$ orbits.
Now $E(M_3)$ forms $3$ $G_3\mbox{-}$orbits. Now $K_3 \trianglelefteq G_3$. Thus $G_3/K_3$ is a group. Its action on $E(X)$ also gives $3$ orbits. $G_3/K_3 \le {\rm Aut}(X)$. Therefore $E(X)$ has at most $3$ Aut($X$)\mbox{-}orbits.
This proves part (a) of Theorem \ref{thm:no-of-orbits}.

Now suppose $X$ be a edge-homogeneous toroidal map with edge symbol $[4,4;4,4]$. Proceeding in the similar way as before one can see that under action of $G_2$, $E(X)$ has $2$ orbits. Hence number of Aut($X$) orbits of $E(X)$ is less than or equals to $2$. This proves part (b) of Theorem \ref{thm:no-of-orbits}.

%Without using the above combinatorial way for showing the bounds are sharp we can use the following alternative method.
Now we show that there existence of a $3$-orbital edge-homogeneous toroidal map with edge type $(3,3;6,6)$.
Let $X$ be a edge-homogeneous map of type $(3,3;6,6)$. Then $X=M_1/K$ for some discrete fixed point free subgroup $K$ of Aut($M_1$). Aut($X$)$=$ Nor($K$)/$K$. Now $E(X)$ has $3$ $G_1/K$-orbits. If we can show that there exists some $K \le H_1$ such that Nor($K$)$=G_1$ then we are done.\\
Consider $K=\langle \alpha_1^5,\beta_1^3 \rangle$. $\alpha_1^5$ and $\beta_1^3$ are translations by the vectors $5A_1$ and $3B_1$ respectively.\\
${\rm Nor}(K)=\{\gamma \in {\rm Aut}(M_1) \mid \gamma\alpha_1^5\gamma^{-1},\gamma\beta_1^3\gamma^{-1}\in K \} = \{\gamma \in {\rm Aut}(M_1) \mid \gamma(5A_1), \gamma(3B_1) \in \ZZ5A_1+\ZZ3B_1 \}$. Clearly $G_1 \le {\rm Nor}(K)$. But $60$ and $120$ degree rotations and reflection about a line does not belongs to ${\rm Nor}(K)$. Hence ${\rm Nor}(K) = G_1$. With this same method one can see that other bound are sharp also. This proves part (c) of Theorem \ref{thm:no-of-orbits}.

Similarly, by Claims \ref{t-1orb}, \ref{t-5orb}, combinatorially the bounds obtained in (a) and (b) are also sharp.
\end{proof}

\begin{claim}\label{t-1orb}
$T_i$ has exactly three {\rm Aut(}$T_i${\rm )} orbits of edges for $i=1,2,3,4$.
\end{claim}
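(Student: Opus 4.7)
The plan is to show, for each $i \in \{1,2,3,4\}$, that $\mathrm{Aut}(T_i)$ acts on $E(T_i)$ with exactly $3$ orbits. The upper bound ``at most $3$'' is immediate from Theorem \ref{thm:no-of-orbits}(a), since each $T_i$ is edge-homogeneous of edge-symbol $(3,3;6,6)$, $(6,6;3,3)$, $(3,6;4,4)$, or $(4,4;3,6)$, respectively. Hence the work lies in proving the matching lower bound for each $T_i$.

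First I would read off from Figure \ref{edge:homo} an explicit presentation $T_i = M_i/K_i$, where $K_i \leq H_i$ is the translation subgroup determined by the boundary identifications of the displayed fundamental domain (e.g.\ the cycle $a_1 \leftrightarrow a_1$ across opposite sides of $T_1$, and so on). Since $\mathrm{Aut}(T_i) = \mathrm{Nor}(K_i)/K_i$, where the normalizer is taken inside $\mathrm{Aut}(M_i)$, and since the three $G_i$-orbits on $E(M_i)$ descend to at most three orbits on $E(T_i)$ under $G_i/K_i$ (which may then be further merged by the full group $\mathrm{Aut}(T_i)$), the key step is to prove $\mathrm{Nor}(K_i) = G_i$. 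Once this is established, $\mathrm{Aut}(T_i) = G_i/K_i$ acts on $E(T_i)$ with exactly three orbits, namely the images of the three $G_i$-orbit classes on $E(M_i)$, and the claim follows.

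The inclusion $G_i \leq \mathrm{Nor}(K_i)$ is clear because $K_i$, being a translation subgroup invariant under $\chi_i$ and commuting with $\alpha_i,\beta_i$, is normal in $G_i$. For the reverse inclusion I mimic the lattice argument used in the proof of Theorem \ref{thm:no-of-orbits}: any $\gamma \in \mathrm{Nor}(K_i)$ must preserve the lattice $K_i \cdot O \subset \mathbb{R}^2$ as a set, where $O$ is the origin. By Figure \ref{edge:homo}, the two generating translations of $K_i$ have skew or incommensurable lengths, so the only linear isometries of $\mathbb{R}^2$ fixing this lattice are the identity and the central inversion $\chi_i$. In particular, the $60^\circ$ and $120^\circ$ rotations available in $\mathrm{Aut}(M_1)$ and $\mathrm{Aut}(M_2)$, the $90^\circ$ rotations and diagonal reflections available in $\mathrm{Aut}(M_3)$ and $\mathrm{Aut}(M_4)$, and any reflection not already contained in $G_i$, all fail to normalize $K_i$. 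Combined with translations (which lie in $H_i \leq G_i$), this yields $\mathrm{Nor}(K_i) \subseteq G_i$.

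The main obstacle is the case-by-case lattice verification in the previous step: each of $M_1,M_2,M_3,M_4$ carries a different point group of candidate linear symmetries, so the incompatibility of $K_i$ with these extra symmetries must be checked individually. The checks themselves are elementary once the translation vectors defining $K_i$ are extracted from Figure \ref{edge:homo}, but some care is needed for $T_3$ and $T_4$, where the ambient $(3,6;4,4)$ and $(4,4;3,6)$ tilings have square-like symmetry and one must rule out an apparent $90^\circ$ rotation by exhibiting the asymmetry built into the fundamental parallelogram of $K_i$.
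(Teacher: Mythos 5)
Your proposal is correct in outline but takes a genuinely different route from the paper. The paper proves the lower bound directly and combinatorially: for each $T_i$ it selects three specific edges and exhibits through each a distinguished cycle (the listed cycles of ``same property but different lengths''); since an automorphism carries such a cycle to one of the same kind and the same length, the three edges lie in pairwise distinct orbits, and the upper bound from Theorem \ref{thm:no-of-orbits} finishes the claim. You instead compute the automorphism group itself, showing ${\rm Nor}(K_i)=G_i$ so that ${\rm Aut}(T_i)=G_i/K_i$ acts with exactly the three orbits inherited from $E(M_i)$. That is essentially the technique the paper uses in the proof of Theorem \ref{thm:no-of-orbits}(c) for the example $K=\langle\alpha_1^{5},\beta_1^{3}\rangle$, transplanted to the maps of Figure \ref{edge:homo}; it buys a full determination of ${\rm Aut}(T_i)$ rather than just an orbit count, at the cost of four explicit lattice computations. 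Two cautions if you carry it out. First, the case-by-case check that the lattice of each $K_i$ admits no point symmetry beyond $\pm I$ inside the holohedry of $M_i$ is the entire content of the lower bound in your scheme and cannot rest on the phrase ``skew or incommensurable lengths''; you must read the two translation vectors off the boundary identifications and test the resulting Gram matrix against every candidate rotation and reflection. Second, the point groups of $M_3$ and $M_4$ are $D_6$, not $D_4$ (as the paper records in the proof of Theorem \ref{thm-main3}), so for $T_3$ and $T_4$ the symmetries to exclude are the $60^{\circ}$ and $120^{\circ}$ rotations and the six reflections of the hexagonal holohedry, not ``$90^{\circ}$ rotations and diagonal reflections.'' Neither point breaks the strategy, but both must be repaired before the argument closes.
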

\begin{proof}
Consider the edge-homogeneous toroidal map $T_1$ as in Figure \ref{edge:homo} (VI). 
%We call a path $u\mbox{-}v\mbox{-}w$ is nice at $v$ if three $3$\mbox{-}gons contaning $v$ lies one side of $u\mbox{-}v\mbox{-}w$ and other three $3$-gons lies on other side of $u\mbox{-}v\mbox{-}w$. We call a cycle nice if it nice at each of its vertices.
Observe that the edges $a_7\mbox{-}a_8, a_7\mbox{-}a_{13}, a_7\mbox{-}a_{12}$ lies on 
%nice 
cycles with same property but different lengths. Since an automorphism preserves the lengths of cycles so the edges $a_7\mbox{-}a_8, a_7\mbox{-}a_{13}, a_7\mbox{-}a_{12}$ lies in different Aut($T_1$) orbits. The cycles are $$(a_7\mbox{-}a_8\mbox{-}a_9\mbox{-}a_{10}\mbox{-}a_6\mbox{-}a_7), (a_7\mbox{-}a_{13}\mbox{-}a_1\mbox{-}a_7),~ and$$ $$(a_1\mbox{-}a_{12}\mbox{-}a_4\mbox{-}a_9\mbox{-}a_{14}\mbox{-}a_1\mbox{-}a_6\mbox{-}a_{11}\mbox{-}a_3\mbox{-}a_8\mbox{-}a_{13}\mbox{-}a_5\mbox{-}a_{10}\mbox{-}a_{15}\mbox{-}a_2\mbox{-}a_7).$$ 
For the map $T_2$ as in Figure \ref{edge:homo} (VII). 
%Here we call a cycle nice if it divides the tiling in the ratio $1:2$ and $2:1$ alternatively in each of its vertices.
We consider the edges $b_5\mbox{-}b_{13}, b_{13}\mbox{-}b_{14}, b_{12}\mbox{-}b_{13}$ and corresponding cycles are $$(b_5\mbox{-}b_{13}\mbox{-}b_{14}\mbox{-}b_{22}\mbox{-}b_{23}\mbox{-}b_{5}), (b_{13}\mbox{-}b_{14}\mbox{-}b_{15}\mbox{-}b_{16}\mbox{-}b_{9}\mbox{-}b_{10}\mbox{-}b_{11}\mbox{-}b_{12}\mbox{-}b_{13}),~ and$$ $$(b_{12}\mbox{-}b_{20}\mbox{-}b_{19}\mbox{-}b_1\mbox{-}b_9\mbox{-}b_{16}\mbox{-}b_{24}\mbox{-}b_{23}\mbox{-}b_{5}\mbox{-}b_{13}\mbox{-}b_{12}).$$ 
For the map $T_3$ as in Figure \ref{edge:homo} (X). 
%Here We call a path $u\mbox{-}v\mbox{-}w$ is nice at $v$ if one $3$\mbox{-}gon and one $6$\mbox{-}gon containing $v$ lies one side of $u\mbox{-}v\mbox{-}w$ and other one $3$\mbox{-}gon and $6$\mbox{-}gon lies on other side of $u\mbox{-}v\mbox{-}w$. We call a cycle nice if it nice at each of its vertices.
We consider the edges 
$w_3\mbox{-}w_{10}, w_3\mbox{-}w_{4}, w_4\mbox{-}w_{11}$ and the corresponding cycles are $$(w_3\mbox{-}w_{10}\mbox{-}w_{15}\mbox{-}w_{22}\mbox{-}w_{27}\mbox{-}w_{24}\mbox{-}w_3), (w_1\mbox{-}w_{2}\mbox{-}w_{3}\mbox{-}w_{4}\mbox{-}w_{5}\mbox{-}\\w_{6}\mbox{-}w_7\mbox{-}w_8\mbox{-}w_1),~and~(w_4\mbox{-}w_{11}\mbox{-}w_{18}\mbox{-}w_{24}\mbox{-}$$ $$w_{32}\mbox{-}w_{33}\mbox{-}w_{2}\mbox{-}w_{10}\mbox{-}w_{16}\mbox{-}w_{23}\mbox{-}w_{30}\mbox{-}w_{36}\mbox{-}w_{8}\mbox{-}w_{9}\mbox{-}w_{14}\mbox{-}w_{22}\mbox{-}w_{28}\mbox{-}w_{35}\mbox{-}w_{6}\mbox{-}w_{12}\mbox{-}w_{20}\mbox{-}w_{21}\mbox{-}w_{26}\mbox{-}w_{24}\mbox{-}w_{4}).$$ 
For the map $T_4$ as in Figure \ref{edge:homo} (IX). 
%Here we call a cycle nice if it divides the tiling in the ratio $1:2$ and $2:4$ alternatively in each of its vertices.
We consider the edges $v_4\mbox{-}v_{12}, v_{11}\mbox{-}v_{12}, v_{12}\mbox{-}v_{13}$ and the corresponding cycles are $$(v_4\mbox{-}v_{12}\mbox{-}v_{11}\mbox{-}v_{20}\mbox{-}v_6\mbox{-}v_{14}\mbox{-}v_{13}\mbox{-}v_{22}\mbox{-}v_8\mbox{-}v_{16}\mbox{-}v_{15}\mbox{-}v_{24}\mbox{-}v_2\mbox{-}v_{10}\mbox{-}v_9\mbox{-}v_{18}\mbox{-}v_4),$$ $$ (v_{11}\mbox{-}v_{12}\mbox{-}v_{13}\mbox{-}v_{14}\mbox{-}v_{15}\mbox{-}v_{16}\mbox{-}v_9\mbox{-}v_{10}\mbox{-}v_{11}),~ and~(v_{12}\mbox{-}v_{13}\mbox{-}v_{22}\mbox{-}v_2\mbox{-}v_{10}\mbox{-}v_{11}\mbox{-}v_{20}\mbox{-}v_8\mbox{-}v_{16}\mbox{-}v_9\mbox{-}v_{17}\mbox{-}v_4\mbox{-}v_{12}).$$ 
\end{proof}
\begin{claim}\label{t-5orb}
$T_5$ has exactly two {\rm Aut(}$T_5${\rm )} orbits of edges.
\end{claim}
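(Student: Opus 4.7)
The plan is to combine the upper bound from Theorem~\ref{thm:no-of-orbits}(b) with an explicit construction showing that $T_5$ has at least two orbits of edges. Since $T_5$ has edge-symbol $(4,4;4,4)$, part (b) of Theorem~\ref{thm:no-of-orbits} already provides the inequality $m \le 2$ for the number of $\mathrm{Aut}(T_5)$-orbits, so it will suffice to exhibit two edges of $T_5$ that cannot lie in a common orbit.

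The invariant I would use, in the spirit of Claim~\ref{t-1orb}, is the length of the \emph{straight cycle} through an edge. In a $(4,4;4,4)$-map each vertex has valence $4$ with a well-defined pairing of opposite edges, namely those separated by two faces in the cyclic face-incidence around the vertex. Starting from an edge $e$ and at each subsequent vertex leaving by the edge opposite the one just entered, one obtains a closed cycle whose length depends only on the $\mathrm{Aut}(T_5)$-orbit of $e$, since every automorphism preserves the face-cycle at each vertex and hence the opposite-edge pairing.

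Next I would read off two such straight cycles directly from Figure~(VIII). The horizontal straight cycle through the edge $u_1\mbox{-}u_2$ is just the bottom row $u_1\mbox{-}u_2\mbox{-}u_3\mbox{-}u_4\mbox{-}u_5\mbox{-}u_1$, of length $5$. For the vertical straight cycle through $u_1\mbox{-}u_6$, the key observation is that the top and bottom rows of the fundamental domain are cyclically shifted relative to one another: the bottom row reads $u_1, u_2, u_3, u_4, u_5, u_1$ while the top row reads $u_3, u_4, u_5, u_1, u_2, u_3$. Following the vertical direction through this twisted identification traces the cycle $u_1\mbox{-}u_6\mbox{-}u_{11}\mbox{-}u_3\mbox{-}u_8\mbox{-}u_{13}\mbox{-}u_5\mbox{-}u_{10}\mbox{-}u_{15}\mbox{-}u_2\mbox{-}u_7\mbox{-}u_{12}\mbox{-}u_4\mbox{-}u_9\mbox{-}u_{14}\mbox{-}u_1$ of length $15$. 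Since $5 \neq 15$, the edges $u_1\mbox{-}u_2$ and $u_1\mbox{-}u_6$ lie in distinct $\mathrm{Aut}(T_5)$-orbits, and combined with the upper bound of $2$ this forces the number of orbits to be exactly $2$.

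The only delicate point is confirming that the vertical geodesic genuinely closes up only after $15$ edges rather than after some shorter length. This is a straightforward column-by-column walk using the cyclic-shift identification of the top and bottom of the fundamental domain; each traversal of a vertical column of $3$ edges returns to the bottom row at a position shifted by $2$, and since $\gcd(2,5)=1$ all five columns are visited exactly once before closing up, giving the claimed length $3 \cdot 5 = 15$.
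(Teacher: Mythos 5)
Your proposal is correct and follows essentially the same route as the paper: the paper also combines the upper bound $m\le 2$ from Theorem~\ref{thm:no-of-orbits}(b) with the observation that $u_1\mbox{-}u_2$ and $u_1\mbox{-}u_6$ lie on the very same two distinguished cycles of lengths $5$ and $15$. If anything, your explicit formulation of the invariant as the length of the straight cycle determined by the opposite-edge pairing makes precise what the paper leaves implicit in the phrase ``cycles of different length and same type.''
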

\begin{proof}
Consider the edge-homogeneous toroidal map $T_5$ as in Figure \ref{edge:homo} (VIII). 
%Here We call a path $u\mbox{-}v\mbox{-}w$ is nice at $v$ if two $4$\mbox{-}gons containing $v$ lies one side of $u\mbox{-}v\mbox{-}w$ and other two $4$\mbox{-}gons containing $v$ lies on other side of $u\mbox{-}v\mbox{-}w$. We call a cycle nice if it nice at each of its vertices.
We consider the edges $u_1\mbox{-}u_2, u_1\mbox{-}u_6$ lies on cycles of different length and same type. The cycles are $$(u_1\mbox{-}u_2\mbox{-}u_3\mbox{-}u_4\mbox{-}u_5\mbox{-}u_1), (u_1\mbox{-}u_6\mbox{-}u_{11}\mbox{-}u_3\mbox{-}u_8\mbox{-}u_{13}\mbox{-}u_5\mbox{-}u_{10}\mbox{-}u_{15}\mbox{-}u_2\mbox{-}u_7\mbox{-}u_{12}\mbox{-}u_4\mbox{-}u_9\mbox{-}u_{14}\mbox{-}u_1).$$ The proof follows by the same argument as in Claim \ref{t-1orb}.
\end{proof}

Now we are going to answer Question \ref{ques}. For that we prove Theorem \ref{thm-main1} first.

\begin{proof}[Proof of Theorem \ref{thm-main1}]
Let $X$ be a $3\mbox{-}$orbital edge-homogeneous toroidal map with edge symbol $(3,3;6,6)$. We have to show that there exists a $2\mbox{-}$orbital cover of $X$. By Proposition \ref{prop1} we can assume $X = M_1/K_1$ for some subgroup $K_1$ of Aut($M_1$) such that $K_1$ has no fixed points( vertex, edge or face). Thus $K_1$ consist of only translations and glide reflections. Since $X$ is orientable $K_1$ cannot contain any glide reflection. Thus $K_1 \le H_1$. Let $G_1 := \langle \alpha_1,\beta_1,\chi_1 \rangle$ where $\chi_1$ is the map obtained by taking reflection of $M_1$ about origin.
Now observe that $E(M_1)$ has $3~ G_1\mbox{-}$orbits. Clearly $K_1 \trianglelefteq G_1$. $G_1/K_1$ acts on $E(X)$ and gives $3$ orbits. $G_1/K_1 \le {\rm Aut}(X)$.
Action of Aut($X$) on $E(X)$ also gives $3\mbox{-}$orbits. Therefore the elements $\alpha \in {\rm Aut}(X) \setminus G_1/K_1$ does not change the $G_1/K_1\mbox{-}$orbits of $E(X)$.
Now consider $$ G_1' = \langle \alpha_1 , \beta_1,\chi_1, \rho_1 \rangle  \le {\rm Aut}(X)$$
Observe that under the action of $G_1'$, $E(M_1)$ forms $2$ orbits.
Let $K_1 = \langle \gamma, \delta \rangle \le H_1$ where $\gamma = \alpha^a\circ \beta^b$ and $\delta = \alpha^c\circ \beta^d$ for some $a,b,c,d \in \ZZ$.
Let $\alpha : z \mapsto z+A_1$ and $\beta : z \mapsto z+B_1$. Then $\gamma : z \mapsto z+ C$ and $\delta: z \mapsto z+ D$ where $C = aA_1 + bB_1$ and $D = cA_1+dB_1$.
Now any cover of $X$ will be of the form $M_1/L_1$ for some subgroup $L_1$ of $K_1$ with rank($L_1$)$ = 2.$ Now we made the following claims to get a suitable $L_1$. 
\begin{claim}\label{clm1}
There exists $m \in \ZZ$ such that $L_1 := \langle \gamma^m, \delta^m \rangle \trianglelefteq G_1'.$
\end{claim}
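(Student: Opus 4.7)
The plan is to reduce normality of $L_1 = \langle \gamma^m, \delta^m \rangle$ in $G_1'$ to a lattice-invariance condition, then to exploit the hypothesis $G_1' \le {\rm Aut}(X)$. Since $\gamma$ and $\delta$ are commuting translations (by $C = aA_1 + bB_1$ and $D = cA_1 + dB_1$), the subgroup $L_1$ is simply the translation sublattice $m\Lambda_0 := m(\mathbb{Z}C + \mathbb{Z}D)$ inside the full translation subgroup $H_1$ of $G_1'$, and in particular $L_1 \le H_1$.

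Next I would observe that, because $H_1 \trianglelefteq G_1'$, conjugation by any $\phi \in G_1'$ on a translation $\tau \in H_1$ depends only on the linear part $M_\phi$ of $\phi$; the group of linear parts is the finite point group $P := G_1'/H_1$, whose non-trivial elements are $-I$ (linear part of $\chi_1$) and the reflection $R$ about $OA$ (linear part of $\rho_1$). Consequently $L_1 \trianglelefteq G_1'$ iff the lattice $m\Lambda_0$ is stable under every $M \in P$, and since scalar multiplication by $m$ commutes with linear maps this is equivalent to $\Lambda_0$ itself being $P$-stable.

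The crucial point is that $\Lambda_0$ is already $P$-stable. Indeed, $G_1' \le {\rm Aut}(X)$ forces each of $\chi_1, \rho_1$ to descend to $X = M_1/K_1$, i.e.\ to normalize $K_1$; in additive notation this is exactly the condition $M_\phi \Lambda_0 = \Lambda_0$ for $\phi \in \{\chi_1, \rho_1\}$. It then follows that $M(m\Lambda_0) = m(M\Lambda_0) = m\Lambda_0$ for every $M \in P$, so \emph{every} positive integer $m$ works (and any $m \ge 2$ yields a proper cover $M_1/L_1 \to X$). The only subtlety is to justify Step 3 carefully: one uses the standard fact that an automorphism of $M_1$ descends to $M_1/K_1$ precisely when it normalizes $K_1$, so the inclusion $G_1' \le {\rm Aut}(X)$ stated in the construction is exactly what feeds into the $P$-stability of $\Lambda_0$, making Claim \ref{clm1} immediate.
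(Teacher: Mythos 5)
Your first two steps are correct and, in fact, cleaner than the computation in the paper: since $\gamma^m,\delta^m$ are translations by $mC,mD$, the lattice of $L_1$ is $m\Lambda_0$ with $\Lambda_0=\ZZ C+\ZZ D$, conjugation by $\phi\in G_1'$ acts on translations through the linear part $M_\phi$, and normality of $L_1$ in $G_1'$ is exactly stability of $m\Lambda_0$ under the point group, which (as you observe) is equivalent to stability of $\Lambda_0$ itself, independently of $m$. The fatal step is Step~3. The displayed line ``$G_1'=\langle\alpha_1,\beta_1,\chi_1,\rho_1\rangle\le{\rm Aut}(X)$'' is a slip in the paper for $\le{\rm Aut}(M_1)$: if $\rho_1$ really normalized $K_1$ and descended to $X$, then $G_1'/K_1$ would act on $E(X)$, and since $E(M_1)$ has only two $G_1'$-orbits, $X$ would be at most $2$-orbital --- contradicting the standing hypothesis of Theorem~\ref{thm-main1}(a) that $X$ is $3$-orbital. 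So in the only case of interest $\rho_1\notin{\rm Nor}_{{\rm Aut}(M_1)}(K_1)$, i.e.\ $\Lambda_0$ is \emph{not} stable under the reflection $R$, and the input you feed into Step~3 is false. Your argument also proves too much: it would give $m=1$, making $X$ its own $2$-orbital cover and rendering the entire covering construction of the theorem vacuous.

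It is worth flagging what your (valid) scaling observation reveals when set against the paper's own proof. The paper proceeds by solving a linear system for integrality of the coefficients $p,q,s,t$ and choosing $m=|ad-bc|$; but as your reduction shows, $R(m\Lambda_0)=m\,R(\Lambda_0)$, so no choice of $m$ can make $\langle\gamma^m,\delta^m\rangle$ normal in $G_1'$ unless $\Lambda_0$ was already $R$-stable (and indeed the solutions of that linear system, e.g.\ $p=(bd-ac)/(ad-bc)$, do not actually depend on $m_1,m_2$). The construction that does work is to scale the \emph{full} translation lattice rather than $\Lambda_0$: with $n=|ad-bc|$ one has $n(\ZZ A_1+\ZZ B_1)\subseteq\Lambda_0$, so $\langle\alpha_1^{\,n},\beta_1^{\,n}\rangle$ is a rank-two subgroup of $K_1$ whose lattice is stable under the whole point group, hence normal in ${\rm Aut}(M_1)$ and in particular in $G_1'$. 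If you replace $\langle\gamma^m,\delta^m\rangle$ by this subgroup, the lattice-stability framework you set up in Steps~1--2 applies verbatim and the rest of the covering argument goes through.
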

Suppose $L_1 = \langle \gamma^{m_1}, \delta^{m_2} \rangle$. We show that there exists suitable $m_1, m_2$ such that $L_1 \trianglelefteq G_1'$. It turns out that we can take $m_1 = m_2$.
To satisfy $L_1 \trianglelefteq G_1'$ it is enough to show that $\rho_1\gamma^{m_1}\rho_1^{-1}, \rho_1\delta^{m_2}\rho_1^{-1} \in L_1$. It is known that conjugation of a translation by rotation or reflection is also a translation by rotated or reflected vector. Since $\gamma$ and $\delta$ are translation by vectors $C$ and $D$ respectively so $\gamma^{m_1}$ and $\delta^{m_2}$ are translation by vectors $m_1C$ and $m_2D$ respectively. Hence $\rho_1\gamma^{m_1}\rho_1^{-1}$ and $\rho_1\delta^{m_2}\rho_1^{-1}$ are translation by the vectors $C'$ and $D'$ respectively. Where $C' = \rho_1(m_1C) = \rho_1(m_1(aA_1 + bB_1)) = m_1a\rho_1(A_1) + m_1b\rho(B_1) = m_1aB_1 + m_1bA_1$ and similarly $D' = \rho_1(m_2D) = m_2cB_1 + m_2dA_1$.
Now these translations belong to $L_1$ if the vectors $C'$ and $D'$ belong to lattice of $L_1 = \ZZ(m_1C) + \ZZ(m_2D)$.
Let $C', D' \in \ZZ(m_1C) + \ZZ(m_2D)$. Then $\exists~ p, q, s, t \in \ZZ$ such that 
$$C' = p(m_1C) + q(m_2D), ~ D' = s(m_1C) + t(m_2D).$$
Putting expressions of $C', D', C, D$ in above equations we get,
$$(bm_1 - pam_1 - qcm_2)A_1 + (am_1 - pbm_1 - qdm_2)B_1 = 0$$
$$(cm_2 - sbm_1 - tdm_2)A_1 + (dm_2 - sam_1 - tcm_2)B_1 = 0.$$
Since, $\{A_1, B_1\}$ is a linearly independent set we have,
$$pam_1 + qcm_2 = bm_1, ~ pbm_1 + qdm_2 = am_1, ~sbm_1 + tdm_2 = cm_2, ~ sam_1 + tcm_2 = dm_2.$$
Now as rank($L_1$)$=2$ so $m_1, m_2 \neq 0$. Dividing the above system by $m_1m_2$ we get,
$$\frac{pa}{m_2} + \frac{qc}{m_1} = \frac{b}{m_2}, ~\frac{pb}{m_2} + \frac{qd}{m_1} = \frac{a}{m_2}, ~\frac{sb}{m_2} + \frac{td}{m_1} = \frac{c}{m_1}, ~\frac{sa}{m_2} + \frac{tc}{m_1} = \frac{d}{m_1}. $$
Now consider $p, q, s, t$ as variables. We can treat above system as a system of linear equations.  We can write this system in matrix form as follows.
$$\begin{bmatrix} a/m_2 & c/m_1 &0&0\\ 
b/m_2 & d/m_1 &0 &0 \\
0& 0& a/m_2 &c/m_1 \\
0&0& b/m_2&d/m_1\end{bmatrix} 
\begin{bmatrix}
p\\q\\s\\t
\end{bmatrix} =
\begin{bmatrix}
b/m_2\\a/m_2\\d/m_1\\c/m_1
\end{bmatrix}$$
Now $C,D$ are linearly independent thus $ad-bc \neq 0$. Hence the coefficient matrix of the above system has non zero determinant. Therefore the system has an unique solution. After solving we get,
$$p = \frac{m_1^2m_2(d-a)b}{(ad-bc)^2}
,~~q = \frac{m_1^2m_2(a^2-bc)}{(ad-bc)^2},~~s = \frac{m_1m_2^2(d^2-bc)}{(ad-bc)^2}
,~~t = \frac{m_1m_2^2(a-d)c}{(ad-bc)^2}.$$ 
Now if we take $m_1 = m_2  = |ad-bc| = m$(say) then $p, q, s, t \in \ZZ$.
Let $L_1 := \langle \gamma^m, \delta^m \rangle$. Then we have $L_1 \trianglelefteq G_1'.$ Hence our Claim \ref{clm1} proved.
\begin{claim}\label{normal}
$K_1/L_1 \trianglelefteq {\rm Aut}(M_1/L_1)$.
\end{claim}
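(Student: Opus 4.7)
The plan is to identify ${\rm Aut}(M_1/L_1)$ with ${\rm Nor}_{{\rm Aut}(M_1)}(L_1)/L_1$ and then to show that every element of ${\rm Aut}(M_1)$ which normalizes the sublattice $L_1$ also normalizes the larger lattice $K_1$. Once this is in place, $K_1 \trianglelefteq {\rm Nor}(L_1)$, and passing to the quotient by $L_1$ gives $K_1/L_1 \trianglelefteq {\rm Aut}(M_1/L_1)$, which is the claim.

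First I would invoke the standard deck-transformation picture: because $L_1$ is a rank-$2$ lattice of translations, it acts freely and properly discontinuously on $M_1$, the covering $M_1 \to M_1/L_1$ is regular, and one has the identification ${\rm Aut}(M_1/L_1) \cong {\rm Nor}_{{\rm Aut}(M_1)}(L_1)/L_1$. Since $L_1 \le K_1$ and $K_1$ is an abelian translation group, the inclusion $K_1 \subseteq {\rm Nor}(L_1)$ is automatic, so $K_1/L_1$ at least embeds as a subgroup of ${\rm Aut}(M_1/L_1)$ before I turn to normality.

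Next I would use the rigidity of planar isometries, in the spirit of Claim \ref{clm1}: every $g \in {\rm Aut}(M_1)$ is a Euclidean isometry of $\mathbb{R}^2$ and may be written as $g = t_v \circ R$, with $t_v$ a translation and $R$ an origin-fixing orthogonal transformation (an element of the point group of the tiling). For any translation $\tau_u : z \mapsto z+u$ the conjugation formula reads $g \tau_u g^{-1} = \tau_{R(u)}$, so $g$ normalizes a lattice of translations $\Lambda$ if and only if $R$ preserves $\Lambda$ setwise. Applying this both to $\Lambda = m(\ZZ C + \ZZ D)$ (the translation lattice of $L_1$) and to $\Lambda = \ZZ C + \ZZ D$ (the translation lattice of $K_1$), and using the $\mathbb{R}$-linearity of $R$ to commute it with the scalar $m$, the two preservation conditions become equivalent. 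Hence ${\rm Nor}(L_1) = {\rm Nor}(K_1)$, and in particular $K_1 \trianglelefteq {\rm Nor}(L_1)$.

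The only step which is not bookkeeping is the one used silently above, namely that every combinatorial automorphism of $M_1$ is realised by a Euclidean isometry of the ambient plane; this is where the geometry of the $(3,3;6,6)$ tiling, via Proposition \ref{propo-1}, has to be invoked, and I expect it to be the main place where care is needed rather than computation. Granting this identification, the remaining argument is formal: quotienting $K_1 \trianglelefteq {\rm Nor}(L_1)$ by $L_1$ produces $K_1/L_1 \trianglelefteq {\rm Aut}(M_1/L_1)$, as required.
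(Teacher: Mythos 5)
Your proposal is correct and follows essentially the same route as the paper: both identify ${\rm Aut}(M_1/L_1)$ with ${\rm Nor}_{{\rm Aut}(M_1)}(L_1)/L_1$ (the paper cites Drach et al.\ for this) and then show ${\rm Nor}(L_1)\le {\rm Nor}(K_1)$ by observing that conjugating a translation gives the translation by the image vector and that the lattice of $L_1$ is the $m$-fold dilate of that of $K_1$, so the scalar $m$ cancels. The only cosmetic difference is that you phrase the cancellation via the linear part $R$ of the isometry while the paper divides the vector identity $(\rho\gamma^m\rho^{-1})(0)=n_1\gamma^m(0)+n_2\delta^m(0)$ by $m$ directly; the content is identical.
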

Let $\rho \in {\rm Nor_{Aut(M_1)}}(L_1)$. Then $\rho \gamma^m \rho^{-1}, \rho \delta^m \rho^{-1} \in L_1$. $\rho \gamma^m \rho^{-1}$ is translation by the vector $(\rho \gamma^m \rho^{-1})(0)$. That is 
$(\rho \gamma^m \rho^{-1})(0) \in$ lattice of $L_1$. Thus there exists $n_1, n_2$ such that $(\rho \gamma^m \rho^{-1})(0) = n_1\gamma^m(0) + n_2 \delta^m(0)$. As $K_1$ is generated by $\gamma$ and $\delta$ so $\rho \gamma \rho^{-1}(0) = n_1\gamma(0) + n_2 \delta(0)$. Thus $\rho \gamma \rho^{-1} \in K_1.$ similarly, $\rho \delta \rho^{-1} \in K_1.$ Therefore $\rho \in {\rm Nor}(K_1) \implies {\rm Nor_{Aut(M_1)}}(L_1) \le {\rm Nor_{Aut(M_1)}}(K_1) \implies K_1 \trianglelefteq {\rm Nor_{Aut(M_1)}}(L_1) \implies K_1/L_1 \trianglelefteq {\rm Nor_{Aut(M_1)}}(L_1)/L_1 $. From \cite{drach:2019} we know ${\rm Aut}(M_1/L_1) = {\rm Nor_{Aut(M_1)}}(L_1)/L_1 .$ This proves  Claim \ref{normal}.

Now by Claim \ref{clm1} $G_1'/L_1$ is a group and acts on $E(M_1/L_1).$ Clearly $E(M_1/L_1)$ has $2~~ G_1'/L_1\mbox{-}$orbits. Since $L_1$ contains two independent vectors, it follows that $Y_1 := M_1/L_1 $ is a toroidal map and $v+L_1 \mapsto v+K_1$ is a covering $\eta : Y_1 \to X$.
%++++++++++++++++++++++++++++++++++++++++++++++++++++++++++=========================================!!
Our next aim is to show that $Y$ is a $2$-orbital map. For that we need the following,
\begin{result}\label{quo}
	Let $L \trianglelefteq K$ and $K$ acts on a topological space $E$. Then $\dfrac{E/L}{K/L}$ is homeomorphic to $E/K$ and $\phi:\dfrac{E/L}{K/L} \to E/K$ defined by $\Big(\dfrac{K}{L}\Big)(Lv) \mapsto Kv ~\forall~ v \in E$ is a homeomorphism.
\end{result}
Let $p:M_1/L_1 \to \dfrac{M_1/L_1}{K_1/L_1} = M_1/K_1$ be the quotient map.

\begin{claim}\label{diag}
Given $\alpha \in {\rm Aut}(M_1/L_1)={\rm Nor_{Aut(M_1)}}(L_1)/L_1$ there exists $\widetilde{\alpha} \in {\rm Aut}(M_1/K_1)$ such that $p\circ \alpha = \widetilde{\alpha} \circ p.$
\end{claim}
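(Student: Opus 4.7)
The plan is to construct $\widetilde{\alpha}$ by lifting $\alpha$ to the full automorphism group of $M_1$ and then pushing it down to the quotient by $K_1$. Since $\alpha \in {\rm Aut}(M_1/L_1) = {\rm Nor_{Aut(M_1)}}(L_1)/L_1$, I would write $\alpha = \rho L_1$ for some $\rho \in {\rm Nor_{Aut(M_1)}}(L_1)$. The key observation extracted from the proof of Claim \ref{normal} is that ${\rm Nor_{Aut(M_1)}}(L_1) \le {\rm Nor_{Aut(M_1)}}(K_1)$, so $\rho$ also normalises $K_1$. I would then define $\widetilde{\alpha} \colon M_1/K_1 \to M_1/K_1$ by $\widetilde{\alpha}(K_1 v) := K_1 \rho(v)$ on vertices, with the obvious extension to edges and faces.

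The first verification is well-definedness of $\widetilde{\alpha}$ as a function on the quotient. If $K_1 v = K_1 v'$, then $v' = k(v)$ for some $k \in K_1$, and $\rho(v') = (\rho k \rho^{-1}) \rho(v)$ with $\rho k \rho^{-1} \in K_1$ by normality, hence $K_1 \rho(v) = K_1 \rho(v')$. A parallel check shows $\widetilde{\alpha}$ does not depend on the chosen lift $\rho$ of $\alpha$: replacing $\rho$ by $\rho \ell$ for $\ell \in L_1 \le K_1$ only alters $\rho(v)$ within its $K_1$-orbit. Once well-definedness is in place, $\widetilde{\alpha}$ inherits from $\rho$ the properties of being a bijection that preserves the cell structure of $M_1/K_1$, with inverse induced by $\rho^{-1}$; therefore $\widetilde{\alpha} \in {\rm Aut}(M_1/K_1)$.

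Finally, the intertwining identity is immediate from the definitions: for any $v \in V(M_1)$,
$$
(p \circ \alpha)(L_1 v) \;=\; p(L_1 \rho(v)) \;=\; K_1 \rho(v) \;=\; \widetilde{\alpha}(K_1 v) \;=\; (\widetilde{\alpha} \circ p)(L_1 v),
$$
and the analogous identity on edges and faces is forced by the vertex-level equality since $p$, $\alpha$ and $\widetilde{\alpha}$ are all polyhedral maps. I anticipate no deep obstacle here: the entire statement is essentially the standard universal property of a quotient by a tower $L_1 \trianglelefteq K_1 \trianglelefteq {\rm Nor_{Aut(M_1)}}(L_1)$ of normal subgroups, and the only non-trivial ingredient, namely the inclusion of normalisers ${\rm Nor_{Aut(M_1)}}(L_1) \le {\rm Nor_{Aut(M_1)}}(K_1)$, has already been secured inside the proof of Claim \ref{normal}.
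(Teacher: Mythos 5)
Your proposal is correct and follows essentially the same route as the paper: both arguments rest on the normality facts secured in Claim \ref{normal} (you use ${\rm Nor_{Aut(M_1)}}(L_1) \le {\rm Nor_{Aut(M_1)}}(K_1)$ upstairs, the paper uses the equivalent statement $K_1/L_1 \trianglelefteq {\rm Aut}(M_1/L_1)$ downstairs) and then push $\alpha$ to the quotient by $K_1$. The only difference is presentational --- you lift $\alpha$ to an explicit $\rho \in {\rm Nor_{Aut(M_1)}}(L_1)$ and define $\widetilde{\alpha}$ by a formula, which makes bijectivity and the intertwining identity immediate, whereas the paper identifies $M_1/K_1$ with $\frac{M_1/L_1}{K_1/L_1}$, invokes the universal property of the quotient, and then verifies injectivity, surjectivity and continuity separately.
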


By Result \ref{quo} we can think $M_1/K_1$ as $\dfrac{M_1/L_1}{K_1/L_1}.$ We show that $\alpha$ takes orbits to orbits for the action of $K_1/L_1$ on $M_1/L_1$.
Let $\mathcal{O}(\ol{v})$ denotes $K_1/L_1$-orbit of $\ol{v} \in M_1/L_1.$ Then
\begin{equation*}
    \begin{split}
        \alpha(\mathcal{O}(\ol{v})) &= \alpha\left(\frac{K_1}{L_1}(\ol{v})\right)  \\&=
        \frac{K_1}{L_1}(\alpha(\ol{v}))~[{\rm since }~ \alpha \frac{K_1}{L_1} = \frac{K_1}{L_1} \alpha~ {\rm because}~ K_1/L_1 \trianglelefteq {\rm Aut(}M_1/L_1{\rm )}]  \\& = \mathcal{O}(\alpha(\ol{v}))
        \end{split}
\end{equation*}
Therefore by universal property of quotient there exists $\widetilde{\alpha} : M_1/K_1 \to M_1/K_1$ such that the following diagram commutes.

\begin{figure}[h]
\begin{center}
    % https://tikzcd.yichuanshen.de/#N4Igdg9gJgpgziAXAbVABwnAlgFyxMJZABgBpiBdUkANwEMAbAVxiRAFEB9AZgHoAdfgFs6OABYBjRsAAyAXx4g5pdJlz5CKAIzkqtRizZc+gkeKkNZC7kpUgM2PASJkte+s1aIQggGYAnOglgYwFhUUlpeR45YFMIi2AAaWsws0jLaO45W1VHDSIdN2oPQ28-QODQ+PMo61iajOTUxsSsnLk9GCgAc3giUACIISQyEBwIJB19TzZBRjQxOlyQIZHEACZqCaRuagYsMC8QKDo4MW6QEoNjwTwGWDj+BaWOuzXR7cnN-boAIxgDAACmonJoQAwYL4cFcZmV7CsPohpjtEHs4cc0EoKHIgA
\begin{tikzcd}
M_1/L_1 \arrow[r, "\alpha"] \arrow[d, "p"]                                      & M_1/L_1 \arrow[d, "p"]                      \\
\frac{M_1/L_1}{K_1/L_1} \arrow[r, "\widetilde{\alpha}", dashed] & \frac{M_1/L_1}{K_1/L_1}

\end{tikzcd}

\caption{{Diagram }}\label{diagram1}
\end{center}
\end{figure}
Now we have to show that $\widetilde{\alpha}$ is an automorphism.
Clearly $\widetilde{\alpha}$ is onto. Let $\ol{v_1}, \ol{v_2} \in Y.$ Suppose $\mathcal{O}(\ol{v_1})$ and $\mathcal{O}(\ol{v_2})$
be $K_1/L_1$ orbits of $Y$. Now,
\begin{equation*}
    \begin{split}
        \widetilde{\alpha}(\mathcal{O}(\ol{v_1})) = \widetilde{\alpha}(\mathcal{O}(\ol{v_2})) & \implies \mathcal{O}(\alpha(\ol{v_1})) = \mathcal{O}(\alpha(\ol{v_2})) \\ & \implies \exists ~\omega \in K_1/L_1 ~{\rm such~ that }~ \omega\alpha(\ol{v_1}) = \alpha(\ol{v_2}) \\&\implies \alpha^{-1}\omega \alpha (\ol{v_1}) = \ol{v_2} \\& \implies \mathcal{O}(\ol{v_1}) = \mathcal{O}(\ol{v_2}) ~[\alpha^{-1}\omega \alpha \in K_1/L_1 ~{\rm since}~ K_1/L_1 \trianglelefteq {\rm Aut}(M_1/L_1)]
    \end{split}
\end{equation*}
Therefore, $\widetilde{\alpha}$ is one-one. Now, by the commutativity of the diagram and using the fact that $p$ is a covering map one can see that $\widetilde{\alpha}$ takes vertices to vertices, edges to edges, faces to faces. It also preserves incidence relations. Let $v\in $ Domain of $\widetilde{\alpha}$. Since $p$ is a covering map there exists a neighbourhood $N$ of $v$ which is evenly covered by $p$. Let $U$ be a component of $p^{-1}(N)$. Then $p:U\to N$ is a homeomorphism. Therefore $(p \circ \alpha)|_U = \widetilde{\alpha}|_N$. As $p$ and $\alpha$ both are continuous so is $\widetilde{\alpha}|_N$. Thus $\widetilde{\alpha}$ is continuous. 
Now, replacing $\alpha$ by $\alpha^{-1}$ we get $\widetilde{\beta}$ in place of $\widetilde{\alpha}$. $\widetilde{\beta}$ has same properties as of $\widetilde{\alpha}$. Now, $\widetilde{\alpha} \circ \widetilde{\beta} = id_{M_1/K_1} = \widetilde{\beta} \circ \widetilde{\alpha}$. Therefore $\widetilde{\alpha}^{-1} = \widetilde{\beta}$. So $\widetilde{\alpha}$ is a homeomorphism.
Hence $\widetilde{\alpha}$ is an automorphism of $M_1/K_1$.

\begin{claim}\label{clm5}
If $\alpha \in$ Aut$(M_1/L_1) \setminus \frac{G_1'}{L_1}$ then $\alpha(\mathcal{O}) = \mathcal{O}$ for all $\frac{G_1'}{L_1}$-orbits $\mathcal{O}$ of $M_1/L_1$.
\end{claim}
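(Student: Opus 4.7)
The plan is to reduce everything to the automorphism group of $X = M_1/K_1$ via the descent diagram from Claim \ref{diag}, and then exploit the hypothesis that $X$ is exactly $3$-edge orbital to rule out a swap of orbits. Throughout, write $Y_1 = M_1/L_1$, let $q \colon M_1 \to Y_1$ be the projection and $\pi = p \circ q \colon M_1 \to X$.

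Fix $\alpha \in {\rm Aut}(Y_1) \setminus G_1'/L_1$, and let $\mathcal{O}_1', \mathcal{O}_2'$ denote the two $G_1'/L_1$-edge-orbits of $Y_1$ (exactly two, by the count preceding Claim \ref{clm1}). By Claim \ref{normal} we have $K_1/L_1 \trianglelefteq {\rm Aut}(Y_1)$, so Claim \ref{diag} produces $\widetilde{\alpha} \in {\rm Aut}(X)$ with $p \circ \alpha = \widetilde{\alpha} \circ p$. I first identify $p(\mathcal{O}_i')$ inside $E(X)$. Since $L_1 \le G_1'$, the preimage $q^{-1}(\mathcal{O}_i')$ is a single $G_1'$-edge-orbit $Q_i$ of $M_1$. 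Using $G_1' = G_1 \cup G_1\rho_1$, the passage from three $G_1$-edge-orbits $O_1, O_2, O_3$ to only two $G_1'$-edge-orbits forces $\rho_1$ to merge exactly two of the three $G_1$-orbits; after relabeling, $\rho_1(O_1) = O_2$, giving $Q_1 = O_1 \cup O_2$ and $Q_2 = O_3$. Hence
\[
p(\mathcal{O}_1') \;=\; \pi(O_1)\cup \pi(O_2),\qquad p(\mathcal{O}_2') \;=\; \pi(O_3).
\]

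Because $X$ is assumed $3$-edge orbital, $\pi(O_1), \pi(O_2), \pi(O_3)$ are three distinct ${\rm Aut}(X)$-orbits of $E(X)$, and every element of ${\rm Aut}(X)$ permutes them. In particular,
\[
\widetilde{\alpha}\bigl(p(\mathcal{O}_1')\bigr) \;=\; \widetilde{\alpha}(\pi(O_1))\cup \widetilde{\alpha}(\pi(O_2))
\]
is a union of two distinct ${\rm Aut}(X)$-orbits, so it cannot equal the single ${\rm Aut}(X)$-orbit $p(\mathcal{O}_2') = \pi(O_3)$. If $\alpha(\mathcal{O}_1') = \mathcal{O}_2'$, then applying $p$ and using the commutativity from Claim \ref{diag} gives $\widetilde{\alpha}(p(\mathcal{O}_1')) = p(\mathcal{O}_2')$, a contradiction; the symmetric argument rules out $\alpha(\mathcal{O}_2') = \mathcal{O}_1'$. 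Since $\alpha$ permutes the two $G_1'/L_1$-orbits and cannot swap them, $\alpha(\mathcal{O}_i') = \mathcal{O}_i'$ for $i=1,2$, which is the claim.

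The main obstacle is the geometric input that the reflection $\rho_1$ merges exactly two (not zero and not all three) of the $G_1$-edge-orbits of the $(3,3;6,6)$ tiling; this depends on taking the reflection axis to be the line through $O$ and $A$ of Figure~\ref{fig:eh}(I), which is parallel to one of the three edge directions. Once this matching $3 = 2+1$ is established, the cardinality mismatch at the level of ${\rm Aut}(X)$-orbits of $X$ does the rest.
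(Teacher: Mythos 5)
Your proposal is correct in substance and shares the paper's skeleton: both descend $\alpha$ to $\widetilde{\alpha} \in {\rm Aut}(M_1/K_1)$ via Claim \ref{diag}, and both exploit the fact that, $X$ being exactly $3$-orbital, the three $G_1/K_1$-orbits of $E(X)$ coincide with the ${\rm Aut}(X)$-orbits, so $\widetilde{\alpha}$ preserves each of them. Where you differ is in how the contradiction is extracted. You push the two $G_1'/L_1$-orbits down to $X$, identify them explicitly as $\pi(O_1)\cup\pi(O_2)$ and $\pi(O_3)$ via the $2+1$ merge pattern of $\rho_1$ on the three $G_1$-orbits, and rule out a swap by a cardinality mismatch (a union of two distinct ${\rm Aut}(X)$-orbits cannot equal a single one). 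The paper instead works with a single pair of edges $L_1a_1, L_1a_2$ in distinct $G_1'/L_1$-orbits: from $\widetilde{\alpha}(K_1a_1)=K_1a_2$ and preservation of the $G_1/K_1$-orbits it extracts $g\in G_1$ with $K_1(ga_1)=K_1a_2$, lifts this to $g'=k\circ g\in G_1\le G_1'$ satisfying $g'(a_1)=a_2$, and concludes the two edges lay in the same $G_1'/L_1$-orbit after all. The paper's lifting argument is a bit leaner --- it never needs to know how $\rho_1$ merges the three $G_1$-orbits, only that $G_1\le G_1'$ --- while yours makes the orbit structure upstairs completely explicit. One imprecision in your last step: you assert that $\alpha$ permutes the two $G_1'/L_1$-orbits, which is not justified ($G_1'/L_1$ is not known to be normal in ${\rm Aut}(M_1/L_1)$, so a priori $\alpha$ could split an orbit rather than carry it onto another). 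This is easily repaired with the tools you already have by arguing edge by edge: for $e\in\mathcal{O}_1'$, $p(\alpha(e))=\widetilde{\alpha}(p(e))$ stays in the ${\rm Aut}(X)$-orbit of $p(e)$, which is one of $\pi(O_1),\pi(O_2)$, and since $p^{-1}\bigl(\pi(O_1)\cup\pi(O_2)\bigr)\cap E(M_1/L_1)=\mathcal{O}_1'$, it follows that $\alpha(e)\in\mathcal{O}_1'$; similarly for $\mathcal{O}_2'$.
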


Let $\alpha \in $Aut$(M_1/L_1) \setminus \frac{G_1'}{L_1}$ and $\widetilde{\alpha}$ be the induced automorphism on $M_1/K_1$ as in Fig. \ref{diagram1}. Suppose, $\mathcal{O}_1$ and $\mathcal{O}_2$ be two $G_1'/L_1$-orbits of $M_1/L_1$. Let $a_1, a_2 \in M_1$ be such that $L_1a_1 \in \mathcal{O}_1$ and $L_1a_2 \in \mathcal{O}_2$ and $\alpha(L_1a_1) = L_1a_2$.
Since, $p(L_1a_i) = K_1a_i$ by commutativity of the diagram in Claim \ref{diag} we get $\widetilde{\alpha}(K_1a_1) = K_1a_2$. 
As $\widetilde\alpha$ does not take an element of $G_1/K_1$-orbit to an element of some other orbit so $K_1a_1$ and $K_1a_2$ belong to same $G_1/K_1$-orbit of $M_1/K_1$.
Therefore, there exists $gK_1 \in G_1/K_1$ such that $(gK_1)(K_1a_1) = K_1a_2 $.\\
Now, Since $(gK_1)(K_1a_1) = K_1(ga_1)$ thus $(gK_1)(K_1a_1) = K_1a_2 \implies K_1(ga_1) = K_1a_2 \implies \exists ~k \in K_1$ such that $(k \circ g)(a_1) = a_2$.\\
Let $g':= k \circ g \in G_1'$ then $g'(a_1) = a_2.$
Consider $g'L_1 \in G_1'/L_1$. Then $(g'L_1)(L_1a_1) = L_1a_2.$
This contradicts our assumption that $L_1a_1$ and $L_1a_2$ belong to two different $G_1'/L_1$-orbit of $M_1/L_1.$ This proves  Claim \ref{clm5}.\\
%\begin{claim}\label{cover} $Y:=M_1/L_1$ is a $2$-orbital cover of $X$.
%\end{claim}
Clearly $Y$ is a toroidal cover of $X$ and from Claim \ref{clm5} it follows that $E(Y)$ has two Aut$(Y)\mbox{-}$orbits.\\
In the same way we can prove existence of a $2$ orbital cover for edge-homogeneous maps with edge symbol $(3,6;4,4), (6,6;3,3)$ and $(4,4;3,6)$.\\
By Prop. \ref{datta2020} we can conclude that there exists edge transitive covers of edge-homogeneous toroidal map. This completes the proof of Theorem \ref{thm-main1}.
\end{proof}

Let $X^\#$ denotes the associated equivelar map corresponding to semi-equivelar map $X$. In Figures \ref{fig-E_9} to \ref{fig-E_6} we have shown equivelar tilings corresponding to semi-equivelar tiling in red color. If $X = E/K$ then $X^\#$ is defined by $X^\# = E^\#/K$ where $E^\#$ is the associated equivelar tiling on the plane associated to $E$. Except maps of type $[3^2,4^1,3^1,4^1]$ and $[3^4,6^1]$ the automorphism groups of the tiling $E$ an $E^\#$ are same. Hence for these type of maps
$X$ and $X^\#$ has same automorphism group namely ${\rm Nor_{Aut(E)}}(K)/K$. We know that if $E$ be a tessellation of the plane then Aut($E$) is of the form $H \rtimes S$ where $H$ be the translation group and $S$ be the stabilizer of a point which we can take as origin. Further suppose $E/K$ be a toroidal map. Every translation of $H$ induces an automorphism of $E/K$. Let $\chi$ be the map obtained by taking $180$ degree rotation with respect to origin of the plane. Then Aut($E/K$) will be of the form $(H \rtimes K')/K$ where $K' \le S$ containing $\chi$. For the tiling of type $[4^4]$ suppose $R_1$ and $R_2$ denoted reflection about the lines $OA$ and $OA_5$, see Figure 1(V). Then possibility of $K'$ upto conjugacy will be $S, \langle \chi \rangle, \langle \chi, R_1 \rangle, \langle \chi, R_2 \rangle, \langle \chi, R_1R_2 \rangle$. For tiling of type $[3^6]$ let $R_1'$ and $R_2'$ denotes the reflections about $OA_1$ and $OA$ respectively. Then possibility of $K'$ up to conjugacy will be $S$, $\langle \chi \rangle$, $ \langle \chi, R_2 \rangle, \langle \chi, R_1R_2 \rangle$. Now observe that except the tilings of type $[3^2,4^1,3^1,4^1]$ and $[3^4,6^1]$ the stabilizer of the origin for $E$ and $E^\#$ are same. For type $[3^2,4^1,3^1,4^1]$ although $R_2$ is a symmetry of $E^\#$ but it is not a symmetry of $E$. Although $R_1$ is a symmetry of $E$ and $E^\#$. For tiling of type $[3^4,6^1]$ nither $R_1'$ nor $R_2'$ are symmetry of $E$ though they are symmetry of $E^\#$. With this observations in mind we will prove following series of lemmas and use them to prove Theorem  \ref{edge-no-of-orbits} and \ref{edge-thm-main1}.

\begin{lemma}\label{uni}
Let $E$ be a semi\mbox{-}equivelar tiling on the plane. Suppose $E$ has $m$ edge\mbox{-}orbits. Then 
{\rm (a)} If the type of $E$ is  $[3^1, 12^2]$, $[3^4,6^1]$, $[3^1,4^1,6^1,4^1]$ or $[4^1,8^2]$  then $m = 2$.\\
{\rm (b)} If the type of $E$ is  $[4^1,6^1, 12^1]$,$ [3^2,4^1,3^1,4^1]$ or $[3^3,4^2]$ then $m = 3$.
\end{lemma}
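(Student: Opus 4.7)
The plan is a case-by-case verification for each of the seven tiling types, anchored in the following common principle: any $\gamma \in {\rm Aut}(E)$ sends an edge $e$ to an edge with the same unordered pair of face-sizes incident to $e$. This ``face-type pair'' is therefore an ${\rm Aut}(E)$-orbit invariant, and counting its possible values at a vertex bounds $m$ from below.

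For the lower bound I would enumerate the face-type pairs that occur around a vertex by reading consecutive pairs $\{p_r,p_{r+1}\}$ from the vertex figure $[p_1^{n_1},\dots,p_k^{n_k}]$. For the four tilings of part (a) this gives exactly two pairs in each case: $\{3,12\},\{12,12\}$; $\{3,3\},\{3,6\}$; $\{3,4\},\{4,6\}$; and $\{4,8\},\{8,8\}$ respectively. For the tilings $[4^1,6^1,12^1]$ and $[3^3,4^2]$ in part (b) one gets three pairs: $\{4,6\},\{6,12\},\{4,12\}$ and $\{3,3\},\{3,4\},\{4,4\}$. Only $[3^2,4^1,3^1,4^1]$ produces two face-type pairs ($\{3,3\}$ and $\{3,4\}$) at this combinatorial level, so a finer invariant will be needed there to reach $m=3$.

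For the matching upper bound I would build, in each case and for each face-type pair $\{p,q\}$, a subgroup of ${\rm Aut}(E)$ acting transitively on the edges of that type. Fix any edge $e$ of face-type $\{p,q\}$ and let $F$ be the face of size $\max(p,q)$ adjacent to $e$; then the rotation by $2\pi/|F|$ at the centre of $F$ cyclically permutes the edges around $F$, and the translation lattice of $E$ carries $F$ to every polygon congruent to it. Combined, these operations carry $e$ to every edge of face-type $\{p,q\}$, yielding $m = 2$ for all four tilings in part (a) and $m = 3$ for $[4^1,6^1,12^1]$ and $[3^3,4^2]$.

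The main obstacle is the snub square case $[3^2,4^1,3^1,4^1]$. Here I would refine the invariant by tracking the combinatorial position of the triangle incident to a given $\{3,4\}$-edge within the cyclic vertex figure $(3,3,4,3,4)$: at each vertex exactly one of the three incident triangles is flanked by two squares (the ``isolated'' triangle), while the other two are flanked by a triangle and a square (the ``paired'' triangles). I would then argue that ${\rm Aut}(E)$ respects this distinction on $\{3,4\}$-edges, so they split into two orbits (``towards-isolated'' and ``towards-paired''), and within each subclass the translation lattice together with the $4$-fold rotation at a square centre acts transitively, whence $m = 1 + 2 = 3$. The hardest technical point is verifying that no symmetry of the snub square tiling interchanges the two sub-configurations, which I would settle by examining the action of a putative such symmetry on the full cyclic vertex figure and deriving a contradiction with the trivial cyclic symmetry of $(3,3,4,3,4)$.
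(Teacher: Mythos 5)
Your lower-bound device (the unordered pair of face sizes flanking an edge is an ${\rm Aut}(E)$-invariant, enumerated from consecutive entries of the vertex figure) is sound, and your enumeration of the resulting classes is correct in all seven cases. The upper-bound mechanism, however, does not work as stated. The rotation by $2\pi/|F|$ about the centre of the larger face $F$ is in general \emph{not} a symmetry of $E$: precisely because $E$ is only semi-equivelar, the edges of $F$ alternate in type around $F$, so the order-$12$ rotation of a $12$-gon of $[3^1,12^2]$ sends a $\{3,12\}$-edge to a $\{12,12\}$-edge, the order-$4$ rotation of a square of $[3^1,4^1,6^1,4^1]$ sends a $\{3,4\}$-edge to a $\{4,6\}$-edge, and the elongated triangular tiling $[3^3,4^2]$ admits no order-$3$ symmetry at all. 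Likewise ``the translation lattice carries $F$ to every polygon congruent to it'' is false: congruent faces need not be translates, and in $[3^4,6^1]$ congruent triangles need not even be ${\rm Aut}(E)$-equivalent. What is actually needed, case by case, is the stabilizer of a suitable face or edge inside the relevant wallpaper group (p6m, p4m, p4g, p6, cmm), which is typically a proper subgroup of the face's own isometry group; the paper's proof is a one-line appeal to inspection, so it offers no computation to compare against, and your plan is the place where the details must be supplied correctly.

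More seriously, the two cases you treat as delicate are exactly the ones where the argument cannot be completed, because the stated values do not match the orbit counts under the full automorphism groups. In $[3^4,6^1]$ the triangles fall into two ${\rm Aut}(E)$-inequivalent families, namely those sharing an edge with a hexagon ($1$ per vertex) and those that do not ($1/3$ per vertex); hence the $\{3,3\}$-edges split into two nonempty combinatorial classes according to whether one or both of their flanking triangles is hexagon-adjacent, giving $m\ge 3$ (and in fact $m=3$ under p6). Your own refinement philosophy, applied consistently, thus defeats the claim $m=2$ for this type. Conversely, for the snub square tiling the proposed invariant is not well defined: every triangle of $[3^2,4^1,3^1,4^1]$ has exactly one edge shared with a triangle, so it is flanked by two squares at exactly one of its three vertices and by a square and a triangle at the other two; ``isolated versus paired'' is therefore a property of a (triangle, vertex) flag, and every $\{3,4\}$-edge exhibits the isolated picture at one endpoint and the paired picture at the other, so the invariant separates nothing. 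Indeed the order-$4$ rotation about the centre of any square is a genuine symmetry (the square centres are the order-$4$ points of p4g) and, together with transitivity on squares, puts all $\{3,4\}$-edges in a single orbit, forcing $m=2$. You should therefore flag that the Lemma as stated appears to interchange the values for $[3^4,6^1]$ and $[3^2,4^1,3^1,4^1]$; the discrepancy propagates from the fact that the groups $G_i'$ used elsewhere in the paper are proper subgroups of the full symmetry groups in precisely these two cases.
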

\begin{proof}
Using rotational, reflectional and transnational symmetries of the tillings in Example \ref{exm2} one can count the number of edge orbits.  
\end{proof}

\begin{lemma}\label{X-9}
Let $X_9 = E_{9}/K_9$ be a semi-equivelar toroidal map of type $[3^1,4^1,6^1,4^1]$. Then $X_9^\#$ has $m_9$ edge orbits if and only if $X_9$ has $2m_9$ many edge orbits.
\end{lemma}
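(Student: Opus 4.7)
The plan is to compare orbit counts by passing through the squares of $X_9$, which are in canonical correspondence with the edges of $X_9^\#$ and whose four sides split canonically into two triangle--square edges (call them $A$-edges) and two hexagon--square edges ($B$-edges). From the paragraph preceding Lemma \ref{uni}, ${\rm Aut}(E_9) = {\rm Aut}(E_9^\#)$ for type $[3^1,4^1,6^1,4^1]$, hence ${\rm Aut}(X_9) = {\rm Aut}(X_9^\#) = {\rm Nor}_{{\rm Aut}(E_9)}(K_9)/K_9$; write $\Gamma$ for this common group.

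First I would partition $E(X_9) = E_A \sqcup E_B$ by face-type; these are precisely the two planar ${\rm Aut}(E_9)$-orbits of Lemma \ref{uni}, and face-preservation makes each family $\Gamma$-invariant. Next, around each square $S$ of $E_9$ the four incident faces alternate $T, H, T, H$, so the two $A$-edges (respectively $B$-edges) of $S$ are opposite sides; the $180^{\circ}$-rotation $r_S$ about the center of $S$ is a symmetry of $E_9$, and since any $180^{\circ}$ rotation inverts every translation, $r_S$ normalizes the translation lattice $K_9$. Its class in $\Gamma$ therefore swaps the two $A$-edges and swaps the two $B$-edges inside the image of $S$. Combined with the fact that each $A$- or $B$-edge belongs to a unique square of $X_9$, this produces $\Gamma$-equivariant (generically $2$-to-$1$) surjections $\Phi_A\colon E_A(X_9) \to {\rm Sq}(X_9)$ and $\Phi_B\colon E_B(X_9) \to {\rm Sq}(X_9)$ that descend to bijections on orbit sets. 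Finally, at the planar level each square of $E_9$ separates a unique pair of opposite hexagons, giving a canonical ${\rm Aut}(E_9)$-equivariant bijection ${\rm Sq}(E_9) \leftrightarrow E(E_9^\#)$; reducing modulo $K_9$ produces a $\Gamma$-equivariant bijection ${\rm Sq}(X_9) \leftrightarrow E(X_9^\#)$.

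Chaining these orbit-level bijections gives
$$|E_A(X_9)/\Gamma| \;=\; |E_B(X_9)/\Gamma| \;=\; |{\rm Sq}(X_9)/\Gamma| \;=\; |E(X_9^\#)/\Gamma|,$$
so $|E(X_9)/\Gamma| = 2\,|E(X_9^\#)/\Gamma|$, which is exactly both directions of the iff. The main obstacle is the descent step: one must verify both that $r_S$ is an automorphism of $E_9$ (which follows from the $4$-fold symmetry at a square center in the rhombitrihexagonal pattern) and that it normalizes $K_9$ (which uses only that $r_S$ conjugates every translation to its inverse and that $K_9$ is closed under inversion). If the class of $r_S$ failed to lie in $\Gamma$, the two $A$-edges of some square could sit in distinct $\Gamma$-orbits and the factor $2$ in the statement would have to be replaced by something strictly larger.
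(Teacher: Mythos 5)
Your proof is correct, but it takes a genuinely different route from the paper's. The paper argues by exhaustive case analysis on $m_9\in\{1,2,3\}$: for each value it identifies which point groups $K'$ can occur in ${\rm Aut}(X_9^\#)=(H_9\rtimes K')/K_9$ (conjugates of $G_9$, of $\langle\chi_9,R_2\rangle$, of $\langle\chi_9,R_1R_2\rangle$), counts by inspection how many orbits that same group induces on $E(E_9)$ (obtaining $6$, $4$, $2$ respectively), and then runs the analogous check for the converse. You instead prove the single uniform identity $|E(X_9)/\Gamma|=2\,|E(X_9^\#)/\Gamma|$ via the chain $E_A/\Gamma\cong E_B/\Gamma\cong{\rm Sq}(X_9)/\Gamma\cong E(X_9^\#)/\Gamma$, the decisive input being that the half-turn $r_S$ about each square centre normalizes the translation group $K_9$ (since it inverts every translation) and hence descends to $\Gamma={\rm Aut}(X_9)$, fusing the two same-type sides of each square into one orbit; this is exactly what makes the induced maps on orbit sets injective and not merely surjective. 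Your approach is cleaner, avoids enumerating admissible point groups, and makes the factor $2$ structurally transparent; what the paper's computation buys in exchange is the explicit list of which orbit counts $(2,4,6)$ are actually realized and by which groups, information that is reused in the proofs of Theorems \ref{edge-no-of-orbits} and \ref{edge-thm-main1}. One small correction: your parenthetical claim that $r_S\in{\rm Aut}(E_9)$ ``follows from the $4$-fold symmetry at a square center'' is misstated --- no quarter-turn about a square centre preserves $E_9$, since it would exchange the triangle-neighbours with the hexagon-neighbours of that square; the stabilizer of a square is only of order $4$ as a Klein four-group. What is true, and all you need, is that the square centres (the midpoints of the edges of the hexagon-centre lattice) are $2$-fold rotation centres of the $p6m$ symmetry group of $E_9$.
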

\begin{proof}
Here possible values of $m_9$ will be $1$, $2$ and $3$. Let $G_9 = \langle \alpha_9, \beta_9, \chi_9 \rangle$. Where $\alpha_9 : z \mapsto z+A_9$, $\beta_9 : z \mapsto z+B_9$ and $\chi_9$ be the $180$ degree rotation about origin, see Figure \ref{fig-E_9}. $E_9^\#$ is of type $[3^6]$.
First suppose $m_9 = 3$.
Let $X_9^\#$ has $3$ edge orbits. ${\rm Aut}(X_9^\#) = {\rm Nor_{Aut(E_9^\#)}}(K_9)/K_9 ={\rm Nor_{Aut(E_9)}}(K_9)/K_9 = {\rm Aut}(X_9) $. Now, $G_9 \le {\rm Nor_{Aut(E_9^\#)}}(K_9)$. Action of $G_9$ on $E(E_9^\#)$ also gives $3$ orbits. Hence $X_9^\#$ to be $3$ orbital we must have ${\rm Nor_{Aut(E_9^\#)}}(K_9) = G_9$ or some conjugate of $G_9$. 
Now under the action of $G_9$, $E(E_9)$ has $6$ orbits. Symmetries of $E_9$ which fixes origin are also symmetries of $E_9^\#$. Hence $E(E_9)$ has $6$ ${\rm Nor_{Aut(E_9)}}(K_9)$-orbits. Thus $X_9$ is $6$ orbital. \\
Now, let $m_9 = 2$. Then Aut($X_9^\#$) is of the form $(H_9 \rtimes K')/K_9$ where $K'$ is conjugate to $\langle \chi_9,R_2 \rangle$ or $\langle \chi_9, R_1R_2 \rangle$. Since $m_9=2$ $K'$ is conjugate to $\langle \chi_9,R_2 \rangle$. One can see that under action of these groups $E(X_9)$ has $4$ orbits. Thus $X_9$ is $4$-orbital map.\\
Now, let $m_9 = 1$. Then Aut($X_9^\#$) is of the form $(H_9 \rtimes K')/K_9$ where $K'$ is conjugate to $\langle \chi_9,R_2R_1 \rangle$. One can see that under action of this group $E(X_9)$ has $2$ orbits. Thus $X_9$ is $2$-orbital map.\\
Conversely, let $X_9$ is $6$ orbital. Then $G_9/K_9 \le $ Aut($X_9$). These symmetries are also present in ${\rm Aut}(E_9^\#)$ and the group $G_9$ gives $3$ orbits on $E(E_9^\#)$. Since, Aut($X_9$) does not change $G_9/K_9$-orbits of $E(X_9)$ so Aut($X_9^\#$) will also not change $G_9/K_9$-orbits of $E(X_9^\#)$. Thus $X_9^\#$ is $3$-orbital.
Now suppose $X_9$ is $4$ orbital. Then its automorphism group will contain either $R_1$ or $R_2$ along with $G_9$. With these symmetries $E(X_9^\#)$ will have two orbits. Hence $X_9^\#$ is $2$-orbital.\\
Now suppose $X_9$ is $2$ orbital. Then its automorphism group will contain either $R_1$ and $R_2$ along with $G_9$. With these symmetries $E(X_9^\#)$ will have one orbit. Hence $X_9^\#$ is $1$-orbital. This completes the proof of Lemma \ref{X-9}.
\end{proof}

\begin{lemma}\label{X-8}
$X_8 = E_{8}/K_8$ be a semi-equivelar toroidal map of type $[3^1,12^2]$. Then $X_8^\#$ has $m_8$ edge orbits if and only if $X_8$ has $2m_8$ many edge orbits.
\end{lemma}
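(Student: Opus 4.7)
The plan is to mirror the structure of the proof of Lemma \ref{X-9}, exploiting the fact (stated in the paragraph preceding Lemma \ref{X-9}) that for type $[3^1,12^2]$ the automorphism groups of $E_8$ and $E_8^\#$ coincide; consequently, with $X_8 = E_8/K_8$ and $X_8^\# = E_8^\#/K_8$, we get ${\rm Aut}(X_8) = {\rm Nor}_{{\rm Aut}(E_8)}(K_8)/K_8 = {\rm Nor}_{{\rm Aut}(E_8^\#)}(K_8)/K_8 = {\rm Aut}(X_8^\#)$. First I would set up $G_8 = \langle \alpha_8, \beta_8, \chi_8 \rangle$, where $\chi_8$ is the $180^\circ$ rotation about the origin, and observe that $E_8^\#$ is the hexagonal tiling of type $[6^3]$. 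Since $X_8^\#$ is an edge-homogeneous toroidal map of edge-symbol $(6,6;3,3)$, Theorem \ref{thm:no-of-orbits}(a) forces $m_8 \in \{1,2,3\}$, so I will do a case analysis.

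The key counting observation underlying the $2$-to-$1$ correspondence is that under the $G_8$-action, $E(E_8^\#)$ has exactly $3$ orbits (one per edge-direction of the hexagonal tiling), while $E(E_8)$ has exactly $6$ orbits: the two Aut$(E_8)$-classes given by Lemma \ref{uni}(a) (the $12$-gon/$12$-gon edges and the triangle/$12$-gon edges) each split into $3$ directional $G_8$-orbits since $\chi_8$ preserves directions. Moreover, every additional element of ${\rm Aut}(E_8) = {\rm Aut}(E_8^\#)$ stabilizing the origin (i.e.\ a rotation or a reflection) permutes directions in the same way on both tilings and preserves each of the two edge-classes of $E_8$. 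This is the geometric content of the lemma: the orbit-merging pattern under any supergroup of $G_8$ in $\mathrm{Aut}(E_8)$ is identical on $E(E_8)$ and $E(E_8^\#)$, doubled.

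For the forward direction I would argue: if $X_8^\#$ has $m_8$ orbits, then ${\rm Aut}(X_8^\#)$ has the form $(H_8 \rtimes K')/K_8$ for some $K' \le S$ containing $\chi_8$, with $K'$ producing exactly $m_8$ orbits on the $3$ directional $G_8$-classes of $E(E_8^\#)$. By the observation above, the same $K'$ acts on $E(E_8)$ and merges the $6$ $G_8$-orbits into exactly $2m_8$ orbits, because the two edge-classes of $E_8$ remain separate under $K'$-action and each behaves like the hexagonal edges. Applied case-by-case ($m_8=3$ corresponds to $K' = \langle \chi_8 \rangle$ giving $6$ orbits; $m_8 = 2$ adds one axis of reflection giving $4$ orbits; $m_8 = 1$ adds full $6$-fold symmetry giving $2$ orbits), this produces exactly $2m_8$ orbits in $X_8$. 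The converse then follows by reversing: given $2m_8$ orbits in $X_8$, the group ${\rm Aut}(X_8) = {\rm Aut}(X_8^\#)$ must have form $(H_8 \rtimes K')/K_8$ with $K'$ precisely the subgroup yielding $m_8$ orbits on $E(X_8^\#)$.

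The main obstacle is the geometric verification that each non-translational symmetry of $E_8$ fixing the origin preserves the partition of $E(E_8)$ into the two edge-classes (i.e.\ no rotation or reflection mixes a $12$-$12$ edge with a $3$-$12$ edge) and acts on directional $G_8$-classes in exactly the same permutation pattern as on $E(E_8^\#)$. This requires pinning down which subgroup $K'$ of the point stabilizer $S$ is realized by each value of $m_8$; the argument parallels the $K'$-classification carried out before Lemma \ref{X-9}, but must be checked in the truncated hexagonal geometry rather than in the hexagonal one.
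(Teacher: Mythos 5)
Your proposal follows essentially the same route as the paper's proof: a case analysis on $m_8\in\{1,2,3\}$, identification of ${\rm Aut}(X_8^\#)$ in the form $(H_8\rtimes K')/K_8$ with $K'$ determined up to conjugacy by $m_8$, and the key count that the $3$ directional $G_8$-orbits of $E(E_8^\#)$ correspond to $6$ $G_8$-orbits of $E(E_8)$ (the two ${\rm Aut}(E_8)$-classes of Lemma \ref{uni} each splitting into $3$ directions), giving $2m_8$ orbits in each case with the converse obtained by the same reversal, exactly as in Lemma \ref{X-9}. One small correction that does not affect the argument: in this paper the associated equivelar tiling $E_8^\#$ (the red tiling on the $12$-gon centres) is of type $[3^6]$, not $[6^3]$, so the appeal to Theorem \ref{thm:no-of-orbits}(a) should be made for edge-symbol $(3,3;6,6)$; the bound $m_8\le 3$ and all orbit counts are unchanged.
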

\begin{proof}
Here also the possibilities of $m_8$ are $1,2$ and $3$.  Let $G_8 = \langle \alpha_8, \beta_8, \chi_8 \rangle$. Where $\alpha_8 : z \mapsto z+A_8$, $\beta_8 : z \mapsto z+B_8$ and $\chi_8$ be the $180$ degree rotation about origin, see Figure \ref{fig-E_8}.
Let us assume $m_8=3$. Let $X_8^\#$ has $3$ edge orbits. Now proceeding in the same way as in Lemma \ref{X-9} we get ${\rm Nor_{Aut(E_8^\#)}}(K_8) = G_8$ or some conjugate of $G_8$. Now, $E(E_8)$ has $6$ $G_8$-orbits. Hence $E(X_8) = E(E_8/K_8)$ has $6$ $G_8/K_8$-orbits. Thus $X_8$ is $6$ orbital.
Now, let $m_8 = 2$. Then Aut($X_8^\#$) is of the form $(H_8 \rtimes K')/K_8$ where $K'$ is conjugate to $\langle \chi_8,R_2 \rangle$ or $\langle \chi_8, R_1R_2 \rangle$. Since $m_8=2$ $K'$ is conjugate to $\langle \chi_8,R_2 \rangle$. Now, one can see that under action og this group $E(E_8)$ gives $4$ orbits. Hence in this case $X_8$ is $4$ orbital.\\
Now, let $m_8 = 1$. Then Aut($X_8^\#$) is of the form $(H_8 \rtimes K')/K_8$ where $K'$ is conjugate to $\langle \chi_8,R_2R_1 \rangle$. One can see that under action of this group $E(X_8)$ has $2$ orbits. Thus $X_8$ is $2$-orbital map.\\
Conversely, let $X_8$ is $6$ orbital. Then $G_8/K_8 \le $ Aut($X_8$). Since these symmetries in $G_8$ are also present in ${\rm Aut}(E_8^\#)$ and the group $G_8$ gives $3$ orbits on $E(E_8^\#)$. Since, Aut($X_8$) does not change $G_8/K_8$-orbits of $E(X_8)$ so Aut($X_8^\#$) will also not change $G_8/K_8$-orbits of $E(X_8^\#)$. Thus $X_8^\#$ is $3$-orbital. Other two cases are similar as in Lemma \ref{X-9}.
\end{proof}

\begin{lemma}\label{X-5}
$X_5 = E_{5}/K_5$ be a semi-equivelar toroidal map of type $[3^2,4^1,3^1,4^1]$. If $X_5^\#$ has $m_5$ edge orbits then $X_5$ has $3m_5$ many edge orbits for $m_5 = 2$. Further if $X_5^\#$ is $1$ orbital then $X_5$ is either $6$ orbital or $3$ orbital. 
\end{lemma}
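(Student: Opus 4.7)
The plan is to follow the framework of Lemmas \ref{X-9} and \ref{X-8}, now exploiting the asymmetry noted just before Lemma \ref{uni}: for type $[3^2,4^1,3^1,4^1]$ the reflection $R_1$ (about $OA$) lies in both ${\rm Aut}(E_5)$ and ${\rm Aut}(E_5^\#)$, while $R_2$ (about $OA_5$) lies only in ${\rm Aut}(E_5^\#)$. Here $E_5^\#=[4^4]$, and in the coordinates of Figure \ref{fig-E_5}, $R_1$ is a diagonal reflection that swaps the two edge-classes of $[4^4]$ whereas $R_2$ preserves each of them.

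First I would set $G_5=\langle\alpha_5,\beta_5,\chi_5\rangle\le {\rm Aut}(E_5)\cap{\rm Aut}(E_5^\#)$, so $G_5/K_5\le{\rm Aut}(X_5)\cap{\rm Aut}(X_5^\#)$. Under $G_5$, $E(E_5^\#)$ has $2$ orbits. For $E(E_5)$, Lemma \ref{uni} gives $3$ orbits under ${\rm Aut}(E_5)=G_5\rtimes\langle R_1\rangle$, and a direct inspection of $E_5$ shows that $R_1$ identifies two distinct $G_5$-orbits inside each of those three classes, so $E(E_5)$ has $6$ $G_5$-orbits.

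Next, I would write ${\rm Aut}(X_5^\#) = (H_5\rtimes K')/K_5$, with $K'$ (up to conjugacy) one of $S$, $\langle\chi_5\rangle$, $\langle\chi_5,R_1\rangle$, $\langle\chi_5,R_2\rangle$, $\langle\chi_5,R_1R_2\rangle$, as in the paragraph before Lemma \ref{uni}. Since ${\rm Nor}_{{\rm Aut}(E_5)}(K_5)={\rm Nor}_{{\rm Aut}(E_5^\#)}(K_5)\cap{\rm Aut}(E_5)$, it follows that ${\rm Aut}(X_5)=(H_5\rtimes(K'\cap{\rm Aut}(E_5)))/K_5$. Counting $K'$-orbits on the two edge-classes of $[4^4]$, only $R_1$ and $R_1R_2$ (a $90^\circ$ rotation) swap them, so $m_5=2$ precisely when $K'\in\{\langle\chi_5\rangle,\langle\chi_5,R_2\rangle\}$, and $m_5=1$ in the remaining three cases. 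In the $m_5=2$ subcases, $R_2\notin{\rm Aut}(E_5)$ forces $K'\cap{\rm Aut}(E_5)=\langle\chi_5\rangle$, so ${\rm Aut}(X_5)$ has exactly $6=3m_5$ edge orbits. In the $m_5=1$ subcases, $K'\cap{\rm Aut}(E_5)=\langle\chi_5,R_1\rangle$ when $R_1\in K'$ (i.e.\ $K'\in\{\langle\chi_5,R_1\rangle,S\}$), giving ${\rm Aut}(X_5)={\rm Aut}(E_5)/K_5$, which collapses the $6$ $G_5$-orbits into the $3$ ${\rm Aut}(E_5)$-orbits; otherwise $K'=\langle\chi_5,R_1R_2\rangle$ with $R_1R_2\notin{\rm Aut}(E_5)$, so $K'\cap{\rm Aut}(E_5)=\langle\chi_5\rangle$ and $X_5$ retains $6$ orbits.

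The main obstacle is the orbit-splitting claim for $E(E_5)$ under $G_5$: one must choose representative edges in each of the three ${\rm Aut}(E_5)$-orbits and verify that $R_1$ sends each representative to a different $G_5$-orbit, so that each ${\rm Aut}(E_5)$-orbit genuinely splits into two upon restriction to $G_5$. A secondary subtlety is confirming that the $90^\circ$ rotation $R_1R_2$ about the origin does not lie in ${\rm Aut}(E_5)$, which holds because the only $4$-fold rotational symmetries of the snub square tiling are at square-centres, while the origin in Figure \ref{fig-E_5} sits at a vertex of $E_5$.
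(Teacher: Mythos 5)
Your proposal follows essentially the same route as the paper: pass to the associated equivelar map $X_5^\#$ of type $[4^4]$, classify the point-stabilizer part $K'$ of ${\rm Nor}_{{\rm Aut}(E_5^\#)}(K_5)$ up to conjugacy, and intersect with ${\rm Aut}(E_5)$ using the fact that the diagonal reflection $R_1$ survives into ${\rm Aut}(E_5)$ while $R_2$ and the $90^\circ$ rotation $R_1R_2$ do not. The substantive difference is that your case analysis is complete and internally consistent where the paper's is not: you correctly identify that $m_5=1$ forces $K'\in\{S,\langle\chi_5,R_1\rangle,\langle\chi_5,R_1R_2\rangle\}$ (the subgroups containing an element that interchanges the two translation-classes of $[4^4]$ edges), whereas the paper assigns $\langle\chi,R_2\rangle$ to the $m_5=1$ case and then counts orbits of $R_2$ on $E(E_5)$ despite having declared $R_2\notin{\rm Aut}(E_5)$; you also handle the $K'=\langle\chi_5,R_2\rangle$ and $K'=S$ possibilities, which the paper silently drops. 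Both versions reach the same conclusions (six orbits when $m_5=2$; three or six when $m_5=1$), and both defer the same direct verification, namely that $E(E_5)$ has exactly six $G_5$-orbits which $R_1$ pairs into the three ${\rm Aut}(E_5)$-orbits. Two minor slips in your write-up, neither fatal: the origin $O$ of Figure \ref{fig-E_5} is not a vertex of $E_5$ but the midpoint of a triangle--triangle edge (equivalently a vertex of the red lattice $E_5^\#$), which still lies off every $4$-fold centre, so your conclusion $R_1R_2\notin{\rm Aut}(E_5)$ stands; and the identity ${\rm Aut}(X_5)={\rm Aut}(E_5)/K_5$ should read ${\rm Aut}(X_5)=\bigl(H_5\rtimes\langle\chi_5,R_1\rangle\bigr)/K_5$, since $H_5\rtimes\langle\chi_5,R_1\rangle$ is only an index-two subgroup of ${\rm Aut}(E_5)$ and $K_5$ need not be normal in the latter; the orbit count of three is unaffected.
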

\begin{proof}
Here possible values of $m_5$ will be $1$ and $2$. Let $G_5 = \langle \alpha_5, \beta_5, \chi_5 \rangle$. Where $\alpha_5 : z \mapsto z+A_5$, $\beta_5 : z \mapsto z+B_5$ and $\chi_5$ be the $180$ degree rotation about origin, see Figure \ref{fig-E_5}. 
First suppose $m_5 = 2$.
Let $X_5^\#$ has $2$ edge orbits. Now, $G_5 \le {\rm Nor_{Aut(E_5^\#)}}(K_5)$. Action of $G_5$ on $E(E_5^\#)$ also gives $2$ orbits. Hence $X_5^\#$ to be $2$ orbital we must have ${\rm Nor_{Aut(E_5^\#)}}(K_5) = G_5$ or some conjugate of $G_5$. 
Now under the action of $G_5$, $E(E_5)$ has $6$ orbits. Aut($E_5$)$\le$Aut($E_5^\#$). So ${\rm Nor_{Aut(E_5)}}(K_5) \le {\rm Nor_{Aut(E_5^\#)}}(K_5) = G_5$. Elements of $G_5$ are also symmetry of $E_5$. Hence ${\rm Nor_{Aut (E_5)}}(K_5)=G_5$.  Hence $X_5$ has $6$ ${\rm Nor_{Aut(E_5)}}(K_5)$-orbits. Thus $X_5$ is $6$ orbital. \\
Now, let $m_5 = 1$. Then Aut($X_5^\#$) is of the form $(H_5 \rtimes K')/K_5$ where $K'$ is conjugate to $\langle \chi_5,R_1 \rangle$ or $\langle \chi_6, R_2 \rangle$ or $\langle \chi_6, R_1R_2 \rangle$. As, $m_5=1$ so $K'$ is conjugate to either $\langle \chi_6, R_2 \rangle$ or $\langle \chi_6, R_1R_2 \rangle$. Now $R_1R_2$ is $90$ degree rotation with respect to origin. It is not a symmetry of $E_5$.
So if $K'$ is conjugate to $\langle \chi_6, R_1R_2 \rangle$ then under the induced group $E(E_5)$ has $6$ orbits. So then $X_5$ becomes $6$ orbital. If $K'$ is conjugate to  $\langle \chi_6, R_2 \rangle$ then under action of this group $E(E_5)$ has $3$ orbits. Thus $X_5$ becomes $3$ orbital.
\end{proof}

\begin{lemma}\label{X-11}
$X_{11} = E_{11}/K_{11}$ be a semi-equivelar toroidal map of type $[4^1,6^1,12^1]$. Then $X_{11}^\#$ has $m_{11}$ edge orbits if and only if $X_{11}$ has $3m_5$ many edge orbits for $m_5 = 1$ and $2$. Further if $X_{11}^\#$ is $3$ orbital if and only if $X_{11}$ is $12$ orbital.
\end{lemma}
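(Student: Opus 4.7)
The plan is to mirror the proofs of Lemmas \ref{X-9}, \ref{X-8}, and \ref{X-5}. Set $G_{11} = \langle \alpha_{11}, \beta_{11}, \chi_{11}\rangle$, where $\alpha_{11}\colon z\mapsto z+A_{11}$ and $\beta_{11}\colon z\mapsto z+B_{11}$ are the fundamental translations indicated in Figure \ref{fig-E_11} and $\chi_{11}$ is the $\pi$-rotation about the origin. The key preliminary step is to verify by inspection of Figure \ref{fig-E_11} the number of $G_{11}$-orbits on $E(E_{11})$ and on $E(E_{11}^\#)$: by Lemma \ref{uni} the plane tiling of type $[4^1,6^1,12^1]$ admits $3$ edge orbits under the full symmetry group of $E_{11}^\#$, and these three orbits are already realized by the action of $G_{11}$ on $E(E_{11}^\#)$; whereas the finer combinatorics of $E_{11}$ refines each of these three types into four $G_{11}$-orbits, giving $12$ orbits on $E(E_{11})$ in total (matching the sharp bound of Theorem \ref{edge-no-of-orbits}(e)).

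For the forward implication I will consider each value $m_{11}\in\{1,2,3\}$ and write ${\rm Aut}(X_{11}^\#) = (H_{11}\rtimes K')/K_{11}$, where $K'$ is a subgroup of the stabilizer $S$ of the origin in ${\rm Aut}(E_{11}^\#)$ containing $\chi_{11}$. The value of $m_{11}$ pins down the conjugacy class of $K'$: when $m_{11} = 3$ one must have $K' = \langle\chi_{11}\rangle$, so ${\rm Nor}_{{\rm Aut}(E_{11}^\#)}(K_{11}) = G_{11}$; since $G_{11}\subseteq {\rm Aut}(E_{11})$ and no further symmetry of $E_{11}$ fixes the origin, the corresponding normalizer in ${\rm Aut}(E_{11})$ is again $G_{11}$, yielding exactly $12$ orbits on $E(X_{11})$. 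When $m_{11} = 2$, $K'$ must contain a reflection (or $60^\circ$-rotation) that pairs certain edges of $E_{11}^\#$; checking which such elements are in ${\rm Aut}(E_{11})$ shows that the same element reduces the $12$ orbits on $E(X_{11})$ to $6$. For $m_{11} = 1$ the subgroup $K'$ is enlarged further so as to act transitively on $E(E_{11}^\#)$, and after tracking which symmetries survive in ${\rm Aut}(E_{11})$ the $12$ orbits collapse to $3$.

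The converse is a direct computation: given $X_{11}$ with $3$, $6$, or $12$ edge orbits respectively, the inclusion ${\rm Aut}(X_{11})\hookrightarrow {\rm Aut}(X_{11}^\#)$ together with the orbit count just established yields the desired number of orbits on $E(X_{11}^\#)$, using the fact that extra symmetries of $E_{11}^\#$ not present in ${\rm Aut}(E_{11})$ can only merge (not split) the existing $G_{11}/K_{11}$-orbits.

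The main obstacle will be the careful bookkeeping of which reflectional symmetries of $E_{11}^\#$ survive as symmetries of $E_{11}$, because the tiling of type $[4^1,6^1,12^1]$ behaves analogously to the chiral case $[3^2,4^1,3^1,4^1]$ treated in Lemma \ref{X-5}: the edge pattern of the dodecagon/hexagon/square configuration breaks certain reflections of the underlying $[6^3]$-like equivelar structure $E_{11}^\#$. The asymmetric ratio — three in the cases $m_{11}=1,2$ versus four in the case $m_{11}=3$ — reflects precisely this discrepancy between the stabilizers of the origin in ${\rm Aut}(E_{11})$ and ${\rm Aut}(E_{11}^\#)$, and articulating it precisely is the delicate part of the argument.
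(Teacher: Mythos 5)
Your overall strategy is the one the paper uses: set up $G_{11}=\langle\alpha_{11},\beta_{11},\chi_{11}\rangle$, note that $E(E_{11})$ has $12$ $G_{11}$-orbits while $E(E_{11}^\#)$ has $3$, run a case analysis on $m_{11}\in\{1,2,3\}$ via the possible point groups $K'$ with ${\rm Aut}(X_{11}^\#)=(H_{11}\rtimes K')/K_{11}$, and for the converse use the fact that the extra symmetries of $E_{11}^\#$ can only merge the $G_{11}/K_{11}$-orbits. The orbit counts you state ($12$, $6$, $3$) agree with the paper's.

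However, your closing paragraph rests on a false premise that contradicts both the paper's setup and your own case analysis. You assert that $[4^1,6^1,12^1]$ is analogous to the ``chiral'' case $[3^2,4^1,3^1,4^1]$, i.e.\ that the square/hexagon/dodecagon pattern breaks certain reflections of $E_{11}^\#$, and you attribute the $4$-versus-$3$ ratio to a discrepancy between the origin-stabilizers of ${\rm Aut}(E_{11})$ and ${\rm Aut}(E_{11}^\#)$. The paper explicitly singles out only $[3^2,4^1,3^1,4^1]$ and $[3^4,6^1]$ as the types where these stabilizers differ; for $[4^1,6^1,12^1]$ one has ${\rm Nor}_{{\rm Aut}(E_{11}^\#)}(K_{11})={\rm Nor}_{{\rm Aut}(E_{11})}(K_{11})$, and indeed the clean biconditional in the statement depends on no reflection being broken (compare Lemma X-5, where a broken reflection forces a genuine case split, $6$-orbital or $3$-orbital, rather than an ``iff''). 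Moreover, if a reflection in $K'$ failed to act on $E_{11}$, it could not reduce the $12$ orbits of $E(X_{11})$ to $6$ or $3$ as your middle paragraphs (correctly) claim it does. The true source of the asymmetric ratio is purely combinatorial: the same chain of point groups merges the three $G_{11}$-orbits of $E(E_{11}^\#)$ as $3\to 2\to 1$ while merging the twelve $G_{11}$-orbits of $E(E_{11})$ as $12\to 6\to 3$, and $12/3=4$ whereas $6/2=3/1=3$. Relatedly, your justification ``no further symmetry of $E_{11}$ fixes the origin'' in the $m_{11}=3$ case is not the right reason (and is false for this tiling); the correct argument is that ${\rm Nor}_{{\rm Aut}(E_{11})}(K_{11})\le{\rm Nor}_{{\rm Aut}(E_{11}^\#)}(K_{11})=G_{11}$ together with $G_{11}\le{\rm Aut}(E_{11})$ forces equality.
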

\begin{proof}
Here possible values of $m_{11}$ will be $1$, $2$ and $3$. Let $G_{11} = \langle \alpha_{11}, \beta_{11}, \chi_{11} \rangle$. Where $\alpha_{11} : z \mapsto z+A_{11}$, $\beta_{11} : z \mapsto z+B_{11}$ and $\chi_{11}$ be the $180$ degree rotation about origin, see Figure \ref{fig-E_11}. $E_{11}^\#$ is of type $[3^6]$.
First suppose $m_{11} = 3$.
Let $X_{11}^\#$ has $3$ edge orbits. ${\rm Aut}(X_{11}^\#) = {\rm Nor_{Aut(E_{11}^\#)}}(K_{11})/K_{11} ={\rm Nor_{Aut(E_{11})}}(K_{11})/K_{11} = {\rm Aut}(X_{11}) $. Now, $G_{11} \le {\rm Nor_{Aut(E_{11}^\#)}}(K_{11})$. Action of $G_{11}$ on $E(E_{11}^\#)$ also gives $3$ orbits. Hence $X_{11}^\#$ to be $3$ orbital we must have ${\rm Nor_{Aut(E_{11}^\#)}}(K_{11}) = G_{11}$ or some conjugate of $G_{11}$. 
Now under the action of $G_{11}$, $E(E_{11})$ has $12$ orbits. Symmetries of $E_{11}$ which fixes origin are also symmetries of $E_{11}^\#$. Hence $X_{11}$ has $12$ ${\rm Nor_{Aut(E_{11})}}(K_{11})$-orbits. Thus $E(X_{11})$ is $12$ orbital. \\
Now, let $m_{11} = 2$. Then Aut($X_{11}^\#$) is of the form $(H_{11} \rtimes K')/K_{11}$ where $K'$ is conjugate to $\langle \chi_{11},R_2 \rangle$ or $\langle \chi_{11}, R_1R_2 \rangle$. Since $m_{11}=2$ $K'$ is conjugate to $\langle \chi_{11},R_2 \rangle$. One can see that under action of this group $E(X_{11})$ has $6$ orbits. Thus $X_{11}$ is $6$-orbital map.\\
Now, let $m_{11} = 1$. Then Aut($X_{11}^\#$) is of the form $(H_{11} \rtimes K')/K_{11}$ where $K'$ is conjugate to $\langle \chi_{11},R_2R_1 \rangle$. One can see that under action of this group $E(X_{11})$ has $3$ orbits. Thus $X_{11}$ is $3$-orbital map.\\
Conversely, let $X_{11}$ is $12$ orbital. Then $G_{11}/K_{11} \le $ Aut($X_{11}$). Since these symmetries are also present in ${\rm Aut}(E_{11})$ and the group $G_{11}$ gives $3$ orbits on $E(E_{11}^\#)$. Since, Aut($X_{11}$) does not change $G_{11}/K_{11}$-orbits of $E(X_{11})$ so Aut($X_{11}^\#$) will also not change $G_{11}/K_{11}$-orbits of $E(X_{11}^\#)$. Thus $E(X_{11}^\#)$ is $3$-orbital.
Now suppose $X_{11}$ is $6$ orbital. Then its automorphism group will contain either $R_1$ or $R_2$ along with $G_{11}$. With these symmetries $E(X_{11}^\#)$ will have two orbit.\\
Now suppose $X_{11}$ is $3$ orbital. Then its automorphism group will contain either $R_1$ and $R_2$ along with $G_{11}$. With these symmetries $E(X_{11}^\#)$ will have one orbit. This completes the proof of Lemma \ref{X-11}.
\end{proof}

\begin{lemma}\label{X-6}
$X_6 = E_{6}/K_6$ is a semi-equivelar toroidal map of type $[4^1,8^2]$. Then 
{\rm (1)}$X_6^\#$ is $2$ orbital implies $X_6$ is either $3$ or $4$ orbital.
{\rm (2)}$X_6^\#$ is $1$ orbital implies $X_6$ is either $2$ or $3$ orbital.
{\rm (3)}$X_6$ is $4$ orbital implies $X_6^\#$ is $2$ orbital.
{\rm (4)}$X_6$ is $2$ orbital implies $X_6^\#$ is $1$ orbital.
\end{lemma}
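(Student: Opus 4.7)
The plan is to follow the strategy of Lemmas \ref{X-8} and \ref{X-11}. Since $[4^1, 8^2]$ is not one of the two exceptional types $[3^2,4^1,3^1,4^1]$ and $[3^4,6^1]$, we have ${\rm Aut}(E_6) = {\rm Aut}(E_6^\#)$, and hence ${\rm Aut}(X_6) = {\rm Aut}(X_6^\#) = {\rm Nor}_{{\rm Aut}(E_6)}(K_6)/K_6$, which can be written as $(H_6 \rtimes K')/K_6$ for some subgroup $K' \subseteq S \cong D_4$ of the origin-stabilizer, with the origin chosen at an octagon-centre of $E_6$ (equivalently, a vertex of $E_6^\#$). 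From the paper's earlier discussion of the $[4^4]$ case, up to conjugacy $K'$ is one of the five subgroups containing $\chi_6$: namely $\langle \chi_6 \rangle$, $\langle \chi_6, R_1 \rangle$, $\langle \chi_6, R_2 \rangle$, $\langle \chi_6, R_1 R_2 \rangle$, or $S$.

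For each of these $K'$, the next step is to count the edge-orbits of the induced action of $(H_6 \rtimes K')/K_6$ on $E(X_6^\#)$ and on $E(X_6)$. Under translation alone, the edges of $E_6^\#$ split into two direction-classes (horizontal and vertical), while those of $E_6$ split into four classes: two of ``type A'' between adjacent $8$-gons (in the H and V directions) and two of ``type B'' between a $4$-gon and an $8$-gon (along the diagonals $/$ and $\backslash$). Among the two Klein-four extensions of $\langle \chi_6 \rangle$, one of them (say $\langle \chi_6, R_1 \rangle$) consists of ``axis'' reflections which preserve the pair $\{\mathrm{H}, \mathrm{V}\}$ elementwise while swapping $\{/, \backslash\}$, and the other ($\langle \chi_6, R_2 \rangle$) consists of ``diagonal'' reflections which swap $\mathrm{H} \leftrightarrow \mathrm{V}$ but preserve $/$ and $\backslash$ individually; the cyclic extension $\langle \chi_6, R_1 R_2 \rangle$ contains the quarter-turn, which swaps both $\mathrm{H} \leftrightarrow \mathrm{V}$ and $/ \leftrightarrow \backslash$ simultaneously. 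These rules would produce the table
\[
\begin{array}{c|cc}
K' & \text{orbits on } E(X_6^\#) & \text{orbits on } E(X_6) \\ \hline
\langle \chi_6 \rangle & 2 & 4 \\
\langle \chi_6, R_1 \rangle & 2 & 3 \\
\langle \chi_6, R_2 \rangle & 1 & 3 \\
\langle \chi_6, R_1 R_2 \rangle & 1 & 2 \\
S & 1 & 2
\end{array}
\]

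All four parts of the lemma would then follow by inspection. For (3), the value $4$ in the right column occurs only in the first row, whose left entry is $2$; for (4), the value $2$ on the right occurs only in the last two rows, both with left entry $1$. The converses (1) and (2) amount to grouping the rows by the value in the left column: the two rows with left value $2$ give right values $4$ and $3$, while the three rows with left value $1$ give right values $3$, $2$, and $2$. The main technical point I expect to be delicate is the verification of the six entries in the right column, specifically correctly distinguishing how the two Klein-four extensions act on the type-A and type-B edges of $E_6$, and checking that the quarter-turn from $\langle \chi_6, R_1 R_2 \rangle$ genuinely swaps $/ \leftrightarrow \backslash$ within the type-B class while keeping type-A inside type-A; but this should reduce to direct inspection of the action of $S$ at an octagon-centre of $E_6$.
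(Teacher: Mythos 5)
Your proposal follows essentially the same route as the paper: identify ${\rm Aut}(X_6)={\rm Aut}(X_6^\#)=(H_6\rtimes K')/K_6$, run through the five conjugacy classes of $K'$ containing $\chi_6$, and read off the orbit counts on $E(E_6^\#)$ and $E(E_6)$ exactly as in your table (the paper tabulates the same values $4,3,3,2$, merely omitting the redundant row $K'=S$). The only quibble is that under translations alone the square--octagon edges of $E_6$ fall into four classes (two per diagonal direction, since the two opposite sides of a square are not translation-equivalent), merging into the two classes $/$ and $\backslash$ only after adjoining $\chi_6$; as every $K'$ in your table contains $\chi_6$, the entries and the conclusion are unaffected.
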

\begin{proof}
Let $X_6 = E_{6}/K_6$ is a semi-equivelar toroidal map of type $[4^1,8^2]$. Let $G_6 = \langle \alpha_6, \beta_6, \chi_6 \rangle$. Where $\alpha_6 : z \mapsto z+A_6$, $\beta_6 : z \mapsto z+B_6$ and $\chi_6$ be the $180$ degree rotation about origin, see Figure \ref{fig-E_6}. 
Let $X_6^\#$ has $2$ edge orbits. ${\rm Aut}(X_6^\#) = {\rm Nor_{Aut(E_6^\#)}}(K_6)/K_6 = {\rm Nor_{Aut(E_6)}}(K_6)/K_6 = {\rm Aut}(X_6)$. Now, $G_6 \le {\rm Nor_{Aut(E_6^\#)}}(K_6)$. Action of $G_6$ on $E(E_6^\#)$ gives $2$ orbits. Hence $X_6^\#$ to be $2$ orbital we must have ${\rm Nor_{Aut(E_6^\#)}}(K_6)$ is conjugate to $G_6$ or $G_6'\langle \alpha_6,\beta_6,\chi_6,R_1\rangle$. 
Now under the action of $G_6$, $E(E_6)$ has $4$ orbits and under action of $G_6'$ $E(E_6)$ gives $3$ orbits. Symmetries of $E_6$ which fixes origin are also symmetries of $E_6^\#$.Thus $X_6$ is $3$ or $4$ orbital. \\
Now, let $X_6^\#$ has $1$ edge orbits. Then Aut($X_6^\#$) is of the form $(H_6 \rtimes K')/K_6$ where $K'$ is conjugate to $\langle \chi_6, R_2 \rangle$ or $\langle \chi_6, R_1R_2 \rangle$. One can see that under action of these groups $E(X_6)$ has $3$ and $2$ orbits. Thus $X_6$ is $2$ or $3$-orbital map.\\
Conversely, let $X_6$ is $4$ orbital. Then $G_6/K_6 \le $ Aut($X_6$). Since these symmetries are also present in Aut($E_6^\#$) the group $G_6$ gives $2$ orbits on $E(E_6^\#)$. As ${\rm Aut}(X_6)$ does not change $G_6/K_6$-orbits of $E(X_6)$ so Aut($X_6^\#$) will also not change $G_6/K_6$-orbits of $E(X_6^\#)$. Thus $X_6^\#$ is $2$-orbital.\\
Now suppose $X_6$ is $2$ orbital. Then its automorphism group will contains $R_1R_2$  along with $G_6$. With these symmetries $E(X_6^\#)$ will have one orbit. This completes the proof of Lemma \ref{X-6}.
\end{proof}

\begin{lemma}\label{X-10}
$X_{10} = E_{10}/K_{10}$ is a semi-equivelar toroidal map of type $[3^4,6^1]$. Then
{\rm (1)} $X_{10}^\#$ is $3$ orbital then $X_{10}$ is $8$ orbital.
%{\rm (2)} $X_{10}$ is $2$ orbital then $X_{10}^\#$ is $1$ orbital.
\end{lemma}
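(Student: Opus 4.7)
The plan is to follow the template already set by Lemmas \ref{X-9}, \ref{X-8}, and especially \ref{X-11}, while carefully accounting for the fact noted before Lemma \ref{uni} that for type $[3^4,6^1]$ the reflections $R_1'$ and $R_2'$ (about $OA_{10}$ and $OA$ in Figure \ref{fig-E_10}) are symmetries of $E_{10}^\#$ but \emph{not} of $E_{10}$. Let $G_{10} = \langle \alpha_{10}, \beta_{10}, \chi_{10} \rangle$ where $\alpha_{10}: z\mapsto z+A_{10}$, $\beta_{10}: z\mapsto z+B_{10}$ and $\chi_{10}$ is the $180^{\circ}$ rotation about $O$. Note that $E_{10}^{\#}$ is of type $[3^{6}]$ and that ${\rm Aut}(X_{10}) = {\rm Nor}_{{\rm Aut}(E_{10})}(K_{10})/K_{10}$ while ${\rm Aut}(X_{10}^{\#}) = {\rm Nor}_{{\rm Aut}(E_{10}^{\#})}(K_{10})/K_{10}$.

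First I would observe that $G_{10}$ is a common subgroup of both ${\rm Aut}(E_{10})$ and ${\rm Aut}(E_{10}^{\#})$, so $G_{10}/K_{10}$ sits inside both automorphism groups. A direct count shows that $G_{10}$ has $3$ orbits on $E(E_{10}^{\#})$ (the three edge classes of the triangular tiling modulo $\chi_{10}$) and $8$ orbits on $E(E_{10})$ (the edges of a fundamental domain of the snub hexagonal tiling under the rotation $\chi_{10}$, matching the bound in Theorem \ref{edge-no-of-orbits}(c)). These two counts are the arithmetic backbone of the argument.

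Next, assuming $X_{10}^{\#}$ is $3$-orbital, I would argue that ${\rm Nor}_{{\rm Aut}(E_{10}^{\#})}(K_{10})$ cannot be strictly larger than (a conjugate of) $G_{10}$: any extra symmetry would be a reflection or a $60^{\circ}/120^{\circ}$ rotation of $E_{10}^{\#}$, and each such element would merge some of the three $G_{10}$-orbits in $E(X_{10}^{\#})$, contradicting $m = 3$. Hence ${\rm Nor}_{{\rm Aut}(E_{10}^{\#})}(K_{10}) = G_{10}$ up to conjugacy.

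Finally, since ${\rm Aut}(E_{10}) \leq {\rm Aut}(E_{10}^{\#})$, we get
\[
{\rm Nor}_{{\rm Aut}(E_{10})}(K_{10}) \;=\; {\rm Nor}_{{\rm Aut}(E_{10}^{\#})}(K_{10}) \cap {\rm Aut}(E_{10}) \;=\; G_{10} \cap {\rm Aut}(E_{10}) \;=\; G_{10},
\]
because all generators of $G_{10}$ are already symmetries of $E_{10}$. Therefore ${\rm Aut}(X_{10}) = G_{10}/K_{10}$, whose action on $E(X_{10})$ has exactly $8$ orbits by the count above, giving the claim. The main subtlety — and where I expect the argument to need the most care — is the step asserting that $X_{10}^\#$ being $3$-orbital forces ${\rm Nor}_{{\rm Aut}(E_{10}^{\#})}(K_{10}) = G_{10}$; this requires ruling out, orbit by orbit, each extra stabilizing symmetry (the reflections $R_1', R_2'$ and the $60^{\circ}$ rotation of the hexagonal lattice) in the same spirit as the case-analysis in Lemmas \ref{X-9} and \ref{X-11}, using that adjoining any such symmetry to $G_{10}$ strictly reduces the number of edge orbits on $E_{10}^\#$.
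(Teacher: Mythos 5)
Your proposal is correct and follows essentially the same route as the paper's proof: both pin down ${\rm Nor}_{{\rm Aut}(E_{10}^{\#})}(K_{10})$ as (a conjugate of) $G_{10}$ from the $3$-orbital hypothesis, use ${\rm Aut}(E_{10})\le{\rm Aut}(E_{10}^{\#})$ to conclude ${\rm Aut}(X_{10})=G_{10}/K_{10}$, and then count the $8$ $G_{10}$-orbits on $E(E_{10})$. Your intersection step ${\rm Nor}_{{\rm Aut}(E_{10})}(K_{10})={\rm Nor}_{{\rm Aut}(E_{10}^{\#})}(K_{10})\cap{\rm Aut}(E_{10})$ just makes explicit what the paper leaves implicit.
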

\begin{proof}
Let $X_{10} = E_{10}/K_{10}$ is a semi-equivelar toroidal map of type $[3^4,6^1]$. Let $G_{10} = \langle \alpha_{10}, \beta_{10}, \chi_{10} \rangle$. Where $\alpha_{10} : z \mapsto z+A_{10}$, $\beta_{10} : z \mapsto z+B_{10}$ and $\chi_{10}$ be the $180$ degree rotation about origin, see Figure \ref{fig-E_10}.
Let $X_{10}^\#$ be $3$ orbital. 
${\rm Aut}(X_{10}^\#) = {\rm Nor_{Aut(E_{10}^\#)}}(K_{10})/K_{10}.$ 
Now, $G_{10} \le {\rm Nor_{Aut(E_{10}^\#)}}(K_{10})$. Action of $G_{10}$ on $E(E_{10}^\#)$ gives $3$ orbits. 
Hence $X_{10}^\#$ to be $3$ orbital we must have ${\rm Nor_{Aut(E_{10}^\#)}}(K_{10})$ is conjugate to $G_6$. Note that Aut($E_{10}$) $\le$ Aut($E_{10}^\#$)
Now under the action of $G_{10}$, $E(E_{10})$ has $8$ orbits. Thus $X_{10}$ is $8$ orbital.
\end{proof}

\begin{proof}[Proof of Theorem \ref{edge-no-of-orbits}]
Let $X_8, X_5, X_9$ be a semi-equivelar toroidal map of type $[3^1,12^2],$ 
$[3^2,4^1,3^1,4^1], [3^1,4^1,6^1,4^1]$ respectively. Let $X_i = E_i/K_i$. Let $G_i$ be as in Lemmas \ref{X-8}, \ref{X-5}, \ref{X-9}. Then $E(E_i)$ has $6$ $G_i$-orbits. Therefore $E(X_i)=E(E_i/K_i)$ has $6$ $G_i/K_i$ orbits. Since $G_i/K_i \le$ Aut($X_i$) so number of edge orbits of $X_i$ is less than or equal to $6$. Now $X_i^\#$ is a map of type $[3^6]$ for i$=8,9$ and of type $[4^4]$ for i$=5$. From Lemmas \ref{X-8}, \ref{X-5}, \ref{X-9} we can conclude that if there exists a $3$-orbital map of type $[3^6]$ then the corresponding semi-equivelar maps has $6$ edge orbits. Existence of a $3$-orbital map of type $[3^6]$ and $2$ orbital map of type $[4^4]$ is guaranteed by Example \ref{example_bdd}. This proves part (a) of Theorem \ref{edge-no-of-orbits}.\\
Now, let $X_{6}$ be a semi-equivelar toroidal map of type $[4^1,8^2]$. Let $G_{6}$ be as in Lemma \ref{X-6}. Then $G_{6}$ gives $4$ orbits on $E(E_{6})$ Then by same reason as part (a) we get number of edge orbits are less than or equals to $4$ and this bound is also sharp by Lemma \ref{X-6} and Example \ref{example_bdd}. This proves part (b) of Theorem \ref{edge-no-of-orbits}.\\
Now, let $X_{10}$ be a semi-equivelar toroidal map of type $[3^4,6^1]$. Let $G_{10}$ be as in Lemma \ref{X-10}. Then $G_{10}$ gives $8$ orbits on $E(E_{10})$ Then by same reason as part (a) we get number of edge orbits are less than or equals to $8$ and this bound is also sharp by Lemma \ref{X-10} and Example \ref{example_bdd}. This proves part (c) of Theorem \ref{edge-no-of-orbits}.\\
Let $X_4 = E_4/K_4$ be a semi-equivelar toroidal map with vertex type $[3^3,4^2]$, where $K_4$ is a fixed point free discrete subgroup of Aut($E_4$). Since $K_4$ has no fixed points, $K_4$ consist of only translations and glide reflections. As $X_4$ is orientable, $K_4$ does not contain any glide reflections. Hence, $K_4$ contains only translations. Let $G_4 = \langle \alpha_4, \beta_4, \chi_4 \rangle$ where $\alpha_{4} : z \mapsto z+A_4$, $\beta_{4} : z \mapsto z+B_{4}$ and $\chi_{4}$ be the $180$ degree rotation about origin, see Figure \ref{fig-E_4}.
Now $E(E_4)$ has $3$ $G_4$-orbits. Therefore $E(X_4)=E(E_4/K_4)$ has $3$ $G_4/K_4$-orbits. $G_4/K_4 \le {\rm Aut}(X_4)$. Therefore number of Aut($X_4$)-orbits of $E(X_4)$ is less than or equals to $3$. 
Again $E(E_4)$ also has $3$ Aut($E_4$)-orbits. Hence number of Aut($X_4$)-orbits of $E(X_4)$ is grater than or equals to $3$. Thus $X_4$ is $3$ orbital. This completes the proof of part (d).\\
let $X_{11}$ be a semi-equivelar toroidal map of type $[4^1,6^1,12^1]$. Let $G_{11}$ be as in Lemma \ref{X-11}. Then $G_{11}$ gives $12$ orbits on $E(E_{11})$ Then by same reason as part (a) we get number of edge orbits are less than or equals to $12$ and this bound is also sharp by Lemma \ref{X-11} and Example \ref{example_bdd}. This proves part (e) of Theorem \ref{edge-no-of-orbits}.
\end{proof}

\begin{proof}[Proof of Theorem \ref{edge-thm-main1}]
Let $X_8$ be a semi-equivelar $m_8$-edge orbital toroidal map of type $[3^1,12^2]$. Let $X_8^\#$ be the associated equivelar map of type $[3^6]$. By Lemma \ref{X-8} we get $X_3^\#$ has $n_8:=m_8/2$ many edge  orbits. Now by Theorem \ref{thm-main1} we have covering $\eta_{k_8} : Y_{k_8}^\# \to X_8^\#$ where $Y_{k_8}^\#$ is a $k_8$-edge orbital for each $ k_8 \le 3$. Now, if we consider the map of type $[3^1,12^2]$ corresponding to the equivelar map $Y_{k_8}^\#$, say $Y_{k_8}$, then by Lemma \ref{X-8} it will be a $(2 \times k_3)$-edge orbital map. Clearly $Y_{k_8}$ is a cover of $X_8$. Hence for given $m_8$-orbital map of type $[3^1,12^2]$ there exists a $k_8$ orbital cover of it for each $k_8 \le m_8$ and $2 \mid k_8$.\\
The proof will go exactly same way for maps of type $[3^1,4^1,6^1,4^1]$ using Lemma \ref{X-9}. This proves part (a) of Theorem \ref{edge-thm-main1}.\\
Let $X_5$ be a $6$-orbital semi-equivelar toroidal map of type $[3^2,4^1,3^1,4^1]$. We can assume $X_5 = E_5/K_5$ for some discrete fixed point free subgroup of Aut($E_5$). Actually $K_5$ consist of only translations.
Let $G_5$ be defined as in Lemma \ref{X-5}. Then $E(X_5)$ has $6$ $G_5/K_5$-orbits. Let $G_5' = \langle \alpha_5, \beta_5, \chi_5, \rho_5 \rangle$ where $\rho_5$ be the map obtained by taking reflection of $E_5$ about the line passing through $O$ and $A$ (see Figure \ref{fig-E_5}). Observe that $E(E_5)$ has $3$ $G_5'$-orbits. Now proceeding in similar way as in Theorem \ref{thm-main1} we get existence of a $3$-orbital cover of $X_5$. This proves part (b) of Theorem \ref{edge-thm-main1}.\\
Let $X_{10}$ be a $8$-orbital semi-equivelar toroidal map of type $[3^4,6^1]$. We can assume $X_{10} = E_{10}/K_{10}$ for some discrete fixed point free subgroup of Aut($E_{10}$). Actually $K_{10}$ consist of only translations.
Let $G_{10}$ be defined as in Lemma \ref{X-10}. Then $E(X_{10})$ has $8$ $G_{10}/K_{10}$-orbits. Let $G_{10}' = \langle \alpha_{10}, \beta_{10}, \chi_{10}, \rho_{10} \rangle$ where $\rho_{10}$ be the map obtained by taking $60$ degree rotation of $E_{10}$ about $O$ (see Figure \ref{fig-E_10}). Observe that $E(E_{10})$ has $2$ $G_{10}'$-orbits. Now proceeding in similar way as in Theorem \ref{thm-main1} we get existence of a $2$-orbital cover of $X_{10}$. This proves part (c) of Theorem \ref{edge-thm-main1}.\\
Let $X_{11}$ be a $12$-orbital semi-equivelar toroidal map of type $[4^1,6^1,12^1]$. We can assume $X_{11} = E_{11}/K_{11}$ for some discrete fixed point free subgroup of Aut($E_{11}$). Actually $K_{11}$ consist of only translations.
Let $G_{11}$ be defined as in Lemma \ref{X-11}. Then $E(X_{11})$ has $12$ $G_{11}/K_{11}$-orbits. Let $G_{11}' = \langle \alpha_{11}, \beta_{11}, \chi_{11}, \rho_{11} \rangle$ where $\rho_{11}$ be the map obtained by taking reflection of $E_{11}$ about the line passing through $O$ and $A$. Observe that $E(E_{11})$ has $6$ $G_{11}'$-orbits. Now proceeding in similar way as in Theorem \ref{thm-main1} we get existence of a $6$-orbital cover $Y_{11}$ of $X_{11}$. Now consider $G_{11}''=\langle \alpha_{11}, \beta_{11}, \chi_{11}, \rho_{11}, \tau_{11} \rangle$ where $\tau_{11}$ is the map obtained by taking reflection of $E_{11}$ about the line passing through $O$ and $A_{11}$ (see Figure \ref{fig-E_11}). Observe that $E(E_{11})$ has $3$ $G_{11}''$-orbits. Now proceeding in similar way as in Theorem \ref{thm-main1} we get existence of a $3$-orbital cover $Z_{11}$ of $Y_{11}$. This proves part (d) of Theorem \ref{edge-thm-main1}.\\
Let $X_6$ be a $4$-orbital semi-equivelar toroidal map of type $[4^1,8^2]$. We can assume $X_6 = E_6/K_6$ for some discrete fixed point free subgroup $K_6$ of Aut($E_6$). Actually $K_6$ consist of only translations.
Let $G_6$ be defined as in Lemma \ref{X-6}. Then $E(X_6)$ has $4$ $G_6/K_6$-orbits. Let $G_6' = \langle \alpha_6, \beta_6, \chi_6, \rho_6 \rangle$ where $\rho_6$ be the map obtained by taking reflection of $E_6$ about the line passing through $O$ and $A$. Observe that $E(E_6)$ has $3$ $G_6'$-orbits. Now proceeding in similar way as in Theorem \ref{thm-main1} we get existence of a $3$-orbital cover $Y_6$ of $X_6$. Now consider $G_6''=\langle \alpha_6, \beta_6, \chi_6, \rho_6, \tau_6 \rangle$ where $\tau_6$ is the map obtained by taking reflection of $E_6$ about the line passing through $O$ and $A_6$ (see Figure \ref{fig-E_6}). Observe that $E(E_6)$ has $2$ $G_6''$-orbits. Now proceeding in similar way as in Theorem \ref{thm-main1} we get existence of a $2$-orbital cover $Z_6$ of $Y_6$. This proves part (e) of Theorem \ref{edge-thm-main1}.
\end{proof}

Now we are moving to see number of sheets of the covers obtained above. For that we make,

\begin{claim}\label{sheet} 
The cover $Y$ in Theorem \ref{thm-main1} is a $m^2$ sheeted covering of $X$.
\end{claim}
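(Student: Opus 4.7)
The plan is to identify the number of sheets of the covering $\eta \colon Y_1 \to X$ with the index $[K_1 : L_1]$, and then to compute this index via the lattice of translations.

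First I would observe that since $X = M_1/K_1$ and $Y_1 = M_1/L_1$ with $L_1 \le K_1$, the covering $\eta \colon Y_1 \to X$ defined by $v + L_1 \mapsto v + K_1$ is a regular covering whose fiber over any point $v + K_1$ is in natural bijection with the set of cosets $K_1/L_1$. Hence the number of sheets of $\eta$ equals $[K_1 : L_1]$. The same statement holds for each of the four edge-symbols treated in Theorem \ref{thm-main1}, so it suffices to compute this index for a generic $K_i$ and $L_i$ produced in the proof.

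Next I would exploit the lattice description. Recall $K_1 = \langle \gamma, \delta \rangle$ with $\gamma \colon z \mapsto z + C$, $\delta \colon z \mapsto z + D$, and $L_1 = \langle \gamma^m, \delta^m \rangle$, where $m = |ad-bc|$. Under the group isomorphism $K_1 \to \ZZ C + \ZZ D$ sending a translation to its translation vector, $K_1$ corresponds to the rank-two lattice $\Lambda_{K} = \ZZ C + \ZZ D$ and $L_1$ corresponds to the sublattice $\Lambda_L = \ZZ(mC) + \ZZ(mD)$. The change-of-basis matrix from $\{C, D\}$ to $\{mC, mD\}$ is the diagonal matrix $\mathrm{diag}(m, m)$, so
\[
[K_1 : L_1] \;=\; [\Lambda_K : \Lambda_L] \;=\; \bigl|\det\mathrm{diag}(m,m)\bigr| \;=\; m^{2}.
\]
Therefore $\eta \colon Y_1 \to X$ is an $m^{2}$-sheeted covering.

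Finally I would note that the same computation goes through verbatim for the covers constructed in the remaining cases of Theorem \ref{thm-main1}, since in each case the sublattice $L_i$ is obtained from $K_i$ by multiplying both generators by the same integer $m = |ad - bc|$. No step here should present a real obstacle; the only thing to be slightly careful about is confirming that $K_1$ really is free abelian of rank two (which holds because it is a discrete fixed-point-free subgroup of the translation group $H_1 \cong \ZZ^{2}$, and $\mathrm{rank}(L_1) = 2$ by construction), so the index $[K_1:L_1]$ truly equals the absolute determinant of the transition matrix.
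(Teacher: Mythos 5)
Your proof is correct and follows essentially the same route as the paper: both arguments reduce the claim to computing the index $[K_1:L_1]$, which equals $m^2$ because $L_1$ is the sublattice $m\ZZ C + m\ZZ D$ of $\ZZ C + \ZZ D$. The only differences are cosmetic --- the paper identifies the number of sheets with this index by citing standard covering-space results on fundamental groups and deck transformations, whereas you use the direct coset--fiber bijection, and you explicitly carry out the determinant computation $[K_1:L_1]=|\det\mathrm{diag}(m,m)|=m^2$ that the paper asserts without detail.
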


To do this we need following two results from the theory of covering spaces.

\begin{result}\label{result1} (\cite{AH2002})
Let $p:(\widetilde{X},\widetilde{x_0})\to (X,x_0)$ be a path-connected covering space of the path-connected, locally path-connected space $X$, and let $H$ be the subgroup $p_*(\pi_1(\widetilde{X},\widetilde{x_0})) \subset \pi_1(X,x_0).$ Then,
\begin{enumerate}
    \item[1.] This covering space is normal if and only if $H$ is a normal subgroup of $\pi_1(X,x_0)$
    \item[2.] $G(\widetilde{X})$(the group of deck transformation of the covering $\widetilde{X}\to X$) is isomorphic to $N(H)/H$ where $N(H)$ is the normalizer of $H$ in $\pi_1(X,x_0)$.
\end{enumerate}
In particular, $G(\widetilde{X})$ is isomorphic to $\pi_1(X,x_0)/H$ if $\widetilde{X}$ is a normal covering. Hence for universal cover $\widetilde{X}\to X$ we have $G(\widetilde{X}) \simeq \pi_1(X)$.
\end{result} 

\begin{result}\label{result2} (\cite{AH2002})
The number of sheets of a covering space $p:(\widetilde{X},\widetilde{x_0})\to (X,x_0)$ with $X$ and $\widetilde{X}$ path-connected equals the index of $p_*(\pi_1(\widetilde{X},\widetilde{x_0}))$ in $\pi_1(X,x_0)$.
\end{result}

In our situation applying Result \ref{result1} for the covering $E_i\to E_i/K_i$ we get $\pi_1(E_i/K_i) = K_i$. For the covering $E_i \to E_i/\mathcal{L}_i$ we get $\pi_1(E_i/\mathcal{L}_i) = \mathcal{L}_i$. Thus applying Result \ref{result2} we get number of sheets of $Y$ over $X$ is $=n:=[K_i:\mathcal{L}_i] = m^2$ for all $i = 3,4,5,6, 7$.  This proves our Claim \ref{sheet}.

\begin{proof}[Proof of Theorem \ref{thm-main2}]
Let $X$ be an edge-homogeneous map of type $(m, \ell, u,v)$. Then form Prop. \ref{propo-1} we get $X = M_i/K$ for some discrete subgroup $K$ of Aut($M_i$). Now $Y$ covers $X$ if and only if  $Y = M_i/L$ for some subgroup $L$ of $K$ generated by $2$ translations corresponding to $2$ independent vectors. Let $K = \langle \gamma , \delta \rangle$. Now consider $L_n = \langle \gamma^n, \delta \rangle$ and $Y_n = M_i/L_n$. then $Y_n$ covers $X$.
Number of sheets of the cover $Y_n \longrightarrow X$ is equal to $[K : L_n] = n.$ Hence $Y_n$ is our required $n$ sheeted cover of $X$.
\end{proof}
%------------------------==================================================!!!!!!!!!!!!!!!!!!!!!!!!!
\begin{proof}[Proof of Theorem \ref{thm-main3}]
Here two maps are isomorphic if they are isomorphic as maps. Two maps are equal if the orbits of $\mathbb{R}^2$ under the action of corresponding groups are equal as sets.
Suppose $X$ and $K$ be as in the proof of Theorem \ref{thm-main2}. Let $n \in \mathbb{N}$. 
Let $Y =  E/L$ be $n$ sheeted cover of $X$. 
Let $L = \langle \omega_1, \omega_2\rangle$. Where $\omega_1, \omega_2 \in K = \langle \gamma, \delta \rangle$. Suppose $\omega_1 = \gamma^a \circ \delta^b$ and $\omega_2 = \gamma^c \circ \delta^d$ where $a,b,c,d \in \ZZ$. Define $M_Y = \begin{bmatrix} a & c\\b &d \end{bmatrix}$.
We represent $Y$ by the associated matrix $M_Y$. This matrix representation corresponding to a map is unique as $\gamma$ and $\delta$ are translations along two linearly independent vectors. Denote area of the torus $Y$ by $\Delta_Y$. As $Y$ is $n$ sheeted covering of $X$ so $\Delta_Y = n\Delta_X \implies$ area of the parallelogram spanned by $w_1$ and $w_2 = n \times$ area of the parallelogram spanned by $\gamma$ and $\delta$. That means $|det(M_Y)| = n$. Therefore for each $n$ sheeted covering, the associated matrix belongs to 
$$ \mathcal{S}:=\{ M \in GL(2,\ZZ): |det(M)| = n \}.$$ 
Conversely for every element of $\mathcal{S}$ we get a $n$ sheeted covering $Y$ of $X$ by associating $\begin{bmatrix} a & c\\b &d \end{bmatrix}$ to $E/\langle a\gamma + b\delta, c\gamma +d\delta\rangle$. So there is an one to one correspondence to $n$-sheeted covers of $X$ and $\mathcal{S}$. To proceed further we need following two lemmas.
\begin{lemma}\label{equl}
Let $Y_1$ and $Y_2$ be maps and $M_1$ and $M_2$ be associated matrix of them respectively. Then $Y_1 = Y_2$ if and only if  there exists an unimodular matrix $($an integer matrix with determinant $1$ or $-1)$ $U$ such that $M_1U = M_2.$  
\end{lemma}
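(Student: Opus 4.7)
The strategy is to translate the equality $Y_1 = Y_2$ into the equality of the underlying translation sublattices $L_1, L_2 \le K$, and then invoke the standard fact from lattice theory that two ordered bases of the same rank-$2$ $\mathbb{Z}$-lattice are related by an unimodular change of basis.

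First, I would unpack the definition. By the convention set just above the lemma, $Y_1 = Y_2$ means that the orbit partitions of $\mathbb{R}^2$ induced by $L_1$ and $L_2$ coincide as sets of subsets. Since $L_1, L_2$ act freely by translations, this is equivalent to $L_1 = L_2$ as subgroups of $K = \langle \gamma, \delta\rangle \cong \mathbb{Z}^2$. Under the isomorphism $\gamma^x\delta^y \longleftrightarrow (x,y)^{\top}$, the columns of $M_i = \begin{bmatrix} a_i & c_i \\ b_i & d_i\end{bmatrix}$ are precisely the coordinate vectors of the two chosen generators of $L_i$ in the basis $\{\gamma,\delta\}$ of $K$.

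Next, the ``if'' direction. Assume $M_1 U = M_2$ for some unimodular $U \in GL(2,\mathbb{Z})$. Then every column of $M_2$ is an integer linear combination of columns of $M_1$, so the generators of $L_2$ lie in $L_1$; applying $U^{-1}$ (also integer, since $\det U = \pm 1$) gives the reverse inclusion. Hence $L_1 = L_2$, i.e.\ $Y_1 = Y_2$.

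For the ``only if'' direction, assume $L_1 = L_2$. Then each generator of $L_2$ is an integer combination of the generators of $L_1$, so there is an integer matrix $U$ with $M_1 U = M_2$; symmetrically there is an integer matrix $V$ with $M_2 V = M_1$. Combining gives $M_1(UV) = M_1$, and since $M_1$ is invertible over $\mathbb{Q}$ (as $L_1$ has rank $2$, so $\det M_1 \neq 0$), we conclude $UV = I$. Taking determinants yields $\det U \cdot \det V = 1$ in $\mathbb{Z}$, forcing $\det U = \pm 1$, i.e.\ $U$ is unimodular.

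The only potentially delicate step is verifying that equality of maps is exactly equality of the lattices $L_i$ rather than merely conjugacy or isomorphism; this is handled at the outset by the author's explicit convention ``two maps are equal if the orbits of $\mathbb{R}^2$ under the action of corresponding groups are equal as sets,'' so no further subtlety arises.
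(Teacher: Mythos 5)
Your proposal is correct and follows essentially the same route as the paper: both reduce the equality $Y_1 = Y_2$ to the equality of the translation subgroups $L_1 = L_2$ and then observe that two ordered $\mathbb{Z}$-bases of the same rank-$2$ lattice are related by a unimodular change of basis, with the ``if'' direction being the same column-wise double-inclusion computation in each case. The only difference is in how the key lattice fact is justified in the ``only if'' direction: the paper routes through an isomorphism $i\colon Y_1\to Y_2$ extended to ${\rm Aut}(E)$ and cites Hardy--Wright (Theorem 32, Chapter 3), whereas you prove it directly via the $M_1(UV)=M_1$, $\det U\cdot\det V=1$ argument, which is a self-contained (and arguably cleaner) version of the same step.
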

\begin{proof}
Let $Y_1 = Y_2$. Let $i:Y_1\to Y_2$ be an isomorphism. We can extend  $i$ to $\widetilde{i} \in Aut(E)$. Then $\widetilde{i}$ will take fundamental parallelogram of $Y_1$ to that of $Y_2$. Hence the latices formed by $L_1$ and $L_2$ are same say $\Lambda$. $\widetilde{i}$ transforms $\Lambda$ to itself. Therefore from \cite{HW1979}(Theorem 32, Chapter 3) we get matrix of the transformation is unimodular. Our lemma follows from this.
\\
Conversely suppose $M_1U = M_2$ where $U$ is an unimodular matrix. Let $M_1 = (w_1~ w_2), M_2 = (w_1'~w_2') $ and $U = \begin{bmatrix} a & b\\c &d \end{bmatrix}$ where $w_i, w_i'$ are column vectors for $i=1,2$.
Therefore 
$$ M_2 = M_1U \implies (w_1'~ w_2') = (w_1~ w_2)\begin{bmatrix} a & b\\c & d \end{bmatrix} = (aw_1+cw_2~~bw_1+dw_2).$$
Now suppose $L_1 = \langle \alpha_1, \beta_1 \rangle$ and $L_2 = \langle \alpha_2, \beta_2 \rangle$ and $A_i, B_i$ be the vectors by which $\alpha_i$ and $\beta_i$ translating the plane for $i=1,2$ and let $C$ and $D$ be the vectors corresponding to $\gamma$ and $\delta$.
Let $$A_1 = p_1C + q_1D, B_1 = s_1C + t_1D, A_2 = p_2C + q_2D, B_2 = s_2C + t_2D.$$
Now $w_1' = \begin{pmatrix} p_2 \\ q_2\end{pmatrix} = a\begin{pmatrix} p_1 \\ q_1\end{pmatrix} + c\begin{pmatrix} s_1 \\ t_1\end{pmatrix} = \begin{pmatrix} ap_1+cs_1 \\ aq_1+ct_1\end{pmatrix}$.
Therefore 
\begin{equation*}
    \begin{split}
        A_2 &= (ap_1 + cs_1)C + (aq_1+ct_1)D \\&= a(p_1C + q_1D) + c(s_1C + t_1D) \\&= aA_1 + cB_1
    \end{split}
\end{equation*}
Hence $\alpha_2 \in L_1$. Similarly $\beta_2 \in L_1$. Therefore $L_2 \le L_1$. Proceeding in the similar way and using the fact that $det(U) = \pm 1$ we get $L_1 \le L_2$.
Therefore $L_1 = L_2$. Thus $Y_1 = E/L_1 = E/L_2 = Y_2$. This completes the proof of Lemma \ref{equl}.
\end{proof}
\begin{lemma}\label{isomm}
Let $Y_1$ and $Y_2$ be two toroidal maps with associated matrix $M_1$ and $M_2$ respectively. Then $Y_1 \simeq Y_2$ if and only if  there exists $A \in G_0$ and $B\in GL(2,\ZZ)$ such that $M_1 = AM_2B$ where $G_0$ is group of rotations and reflections fixing the origin in $E$.
\end{lemma}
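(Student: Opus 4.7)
The plan is to reduce the isomorphism question for the quotient maps to a lattice-equivalence question, using that each $M_j$ is a column-wise encoding of a generating pair for the translation subgroup $L_j$ in the basis $\{\gamma,\delta\}$ of the ambient lattice. Throughout I will use $Y_j=E/L_j$ with $E$ the relevant tiling on $\mathbb{R}^2$, and the standard decomposition ${\rm Aut}(E)=H\rtimes G_0$ where $H$ is the translation lattice of $E$ and $G_0$ is the stabilizer of the origin.

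For the "if" direction, assume $M_1=AM_2B$ with $A\in G_0$ and $B\in GL(2,\ZZ)$. Right multiplication by $B$ is exactly the change-of-basis operation in Lemma \ref{equl}, so $M_2$ and $M_2B$ describe the same subgroup $L_2$ and hence the same map $Y_2$. It therefore suffices to produce an isomorphism $Y_2\to Y_1'$, where $Y_1'$ has associated matrix $AM_2$. Since $A\in G_0\le {\rm Aut}(E)$, it sends the lattice $L_2$ bijectively onto the lattice $L_1'$ generated (in the original basis) by the columns of $AM_2$; in particular $A L_2 A^{-1}=L_1'$. Hence $A$ descends to a well-defined map of quotients $E/L_2\to E/L_1'$ that preserves the cell structure, i.e.\ to an isomorphism $Y_2\simeq Y_1'=Y_1$.

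For the "only if" direction, let $i\colon Y_1\to Y_2$ be an isomorphism. Since $E$ is simply connected and the maps $E\to Y_j$ are the universal covers, $i$ lifts to a homeomorphism $\widetilde{i}\colon E\to E$ that is cellular (it sends vertices, edges and faces of $E$ to those of $E$ because $i$ does so on the quotients). Any cellular self-homeomorphism of the planar tiling $E$ lies in ${\rm Aut}(E)$, so we may write $\widetilde{i}=\tau\circ A_0$ with $\tau\in H$ a translation and $A_0\in G_0$. The covering-space relation forces $\widetilde{i} L_1 \widetilde{i}^{-1}=L_2$, and since $\tau$ commutes with every translation, this simplifies to $A_0 L_1 A_0^{-1}=L_2$. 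Setting $A=A_0^{-1}$, the columns of $AM_2$ and the columns of $M_1$ are two generating pairs for the same lattice $L_1$, so they are related by a unimodular change of basis (Lemma \ref{equl}): there exists $B\in GL(2,\ZZ)$ with $M_1=AM_2B$, which is the required identity.

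The main obstacle is the lifting step: one has to know that the abstract polyhedral isomorphism $i$ actually admits a lift to a cellular automorphism of $E$, and that the lift belongs to the discrete symmetry group ${\rm Aut}(E)=H\rtimes G_0$ rather than to some larger group of cellular self-homeomorphisms of $\mathbb{R}^2$. This uses the rigidity of the planar tilings under consideration (each of the five edge-homogeneous tilings, or the equivelar refinement for the semi-equivelar cases) together with the standard covering-space lifting criterion for the universal cover $E\to E/L_j$; once this is in place, the remaining algebra reduces to bookkeeping with lattice bases and $GL(2,\ZZ)$.
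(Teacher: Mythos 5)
Your proof is correct and follows essentially the same route as the paper's: lift the isomorphism to the covering plane $E$, decompose the lift via ${\rm Aut}(E)=H\rtimes G_0$ into a translation and a point stabilizer, and absorb the change of generating pair for the common lattice into a unimodular matrix $B$ (with the converse obtained by descending $A$ to the quotients). Your write-up is in fact more careful than the paper's sketch, which never mentions the translation component of the lift or the conjugation relation $\widetilde{i}L_1\widetilde{i}^{-1}=L_2$ that you use to kill it.
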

\begin{proof}
Let $Y_1 \simeq Y_2$ and $\alpha : Y_1 \to Y_2$ be an isomorphism. Now $\alpha$ can be extended to an automorphism of the covering plane $E$, call that extension be $\widetilde{\alpha}$. 
Clearly $\widetilde{\alpha}$ will take fundamental parallelogram of $Y_1$ to that of $Y_2$. Now the only ways to transform one fundamental region to another are rotation, reflection and change of basis of $E$. Multiplication by an element of $GL(2,\ZZ) $ will take care of base change. Rotation, reflection or their composition will take care by multiplication by $A\in G_0$. Hence we get $M_1 = AM_2B$.\\
Conversely let $M_1 = AM_2B$. $A \in G_0$ so the combinatorial type of the torus associated to the matrix $AM_2$ and $M_2$ are same. Geometrically multiplying by elements of $GL(2,\ZZ)$ corresponds to modifying the fundamental domain by changing the basis. Hence this will not change the combinatorial type of the torus. Thus $Y_1 \simeq Y_2$. This completes the proof of Lemma \ref{isomm}.
\end{proof}
Now define a relation on $\mathcal{S}$ by $P\sim Q \iff P= QU$ for some unimodular matrix $U$. Clearly this is an equivalence relation. Consider $\mathcal{S}' = \mathcal{S}/\sim $. So by Lemma \ref{equl} we can conclude that there are $\#\mathcal{S}'$ many distinct $n$ sheeted cover of $X$ exists. Let's find this cardinality.
Now for every $m\times n$ matrix $P$ with integer entries has an unique $m \times n$ matrix $H$, called hermite normal form of $P$, such that $H = PU$ for some unimodular matrix $U$. All elements of an equivalence class of $\mathcal{S}'$ has  same hermite normal form and we take this matrix in hermite normal form as representative of that equivalence class. Thus to find cardinality of $\mathcal{S}'$ it is enough to find number of distinct matrices $M$ which are in hermite normal form and has determinant $n$. We do not take the matrices with determinant $-n$ because by multiplying by the unimodular matrix  
$\begin{bmatrix} 0 & 1\\1 &0 \end{bmatrix}$ changes sign of the determinant.
As $M$ is in lower triangular form so take $M = \begin{bmatrix} a & 0\\b &d \end{bmatrix}$.
Then $det(M) = ad = n \implies a= n/d.$ By definition of hermite normal form $b\geq0$ and $b<d$ so $b$ has $d$ choices for each $d|n.$ Hence there are precisely $\sigma(n):=\sum_{d|n}d$ many distinct $M$ possible. Thus $\#\mathcal{S}' = \sigma(n).$
Let 
$\mathcal{S}_1 = \{ M| M $ is a representative of an equivalence class of $ \mathcal{S}'$ which is in hermite normal form$\}$
Clearly $\#\mathcal{S}_1 = \sigma(n).$ Now define a relation on $\mathcal{S}_1 $ by $M_1\sim M_2  \iff \exists A \in G_0 ~such ~that~ M_1 = AM_2$. Clearly this is an equivalence relation. 
Consider $\mathcal{S}_2 := \mathcal{S}_1/\sim$.
By Lemma \ref{isomm} it follows that there are $\#\mathcal{S}_2$ many $n$ sheeted covers upto isomorphism. Because here all matrices $M_i$ has same determinant so $M_1 = AM_2B \implies det(A)det(B) = 1$. As det($A$) and det($B$) both are integer so they belongs to $\{1,-1\}$ i.e. they are unimodular matrices.
Now we have to find $\#\mathcal{S}_2$.  Observe that the matrix representation of elements of $G_0$ with respect to the basis $\{\alpha_i(0), \beta_i(0)\}$ have integer entries because lattice points must go to lattice points by a symmetry of the plane where $\alpha_i : z \mapsto z+A_i$ and $\beta_i:z\mapsto z+B_i$ are two translations of $E_i$ (see Figure  \ref{fig:eh} to \ref{fig-E_6}). Suppose $M_1$ and $M_2 \in \mathcal{S}_2$ such that $M_1 \sim M_2$. So there exists $A \in G_0$ such that $M_1 = AM_2$. Let 
$M_1 = \begin{bmatrix} \frac{n}{d_1} & 0\\c_1 &d_1 \end{bmatrix}$ , 
$M_2 = \begin{bmatrix} \frac{n}{d_2} & 0\\c_2 &d_2 \end{bmatrix}$ and 
$A = \begin{bmatrix} p & q\\r &s \end{bmatrix}$. 
Then 
\begin{equation}\label{mat}
    \begin{split}
        M_1 = AM_2 &\implies  \begin{bmatrix} \frac{n}{d_1} & 0\\c_1 &d_1 \end{bmatrix} = \begin{bmatrix} p & q\\r &s \end{bmatrix} \begin{bmatrix} \frac{n}{d_2} & 0\\c_2 &d_2 \end{bmatrix} = \begin{bmatrix} \frac{np}{d_2}+qc_2 & qd_2\\\frac{rn}{d_2}+sc_2 &sd_2 \end{bmatrix} \\
        &\implies qd_2 = 0 \\&\implies q=0 ~since ~d_2 \neq 0.
    \end{split}
\end{equation}
Therefore $A = \begin{bmatrix} p & 0\\r &s \end{bmatrix}$. $det(A) = 1 \implies ps = 1 \implies s= 1/p.$
Again from equation \ref{mat} we get $$np/d_2 = n/d_1 \implies p = d_2/d_1$$ and $$rn/d_2 + c_2/p = c_1 \implies r = (d_2c_1-d_1c_2)/n$$
Therefore $$A = \begin{bmatrix} d_2/d_1 & 0\\(d_2c_1-d_1c_2)/n &d_1/d_2 \end{bmatrix}.$$ As $A$ has integer entries and $d_1, d_2$ are positive so $d_1 = d_2 = d$(say) and $n|d(c_1-c_2)$.
Hence $$A = \begin{bmatrix} 1 & 0\\ d(c_1-c_2)/n & 1 \end{bmatrix}.$$
Now $A \in G_0$ and $G_0 = D_6$ for maps with edge symbol $(3,3;6,6),(6,6;3,3), (3,6;4,4),$ and  $(4,4;3,6)$ and maps of type $[3^1,4^1,6^1,4^1], [3^1,12^2], [4^1,6^1,12^1]$; $G_0 = D_4$ for maps of type $(4,4;4,4),[3^2,4^1,3^1,4^1],[4^1,8^2]$; $G_0=\ZZ_6$ for $[3^4,6^1]$.
Here $D_6$ is generated by $\begin{bmatrix} 0 & -1\\ 1 & 1 \end{bmatrix}$ and $\begin{bmatrix} -1 & -1\\ 0 & 1 \end{bmatrix}$. $D_4$ is generated by $\begin{bmatrix} 0 & 1\\ -1 & 0 \end{bmatrix}$ and $\begin{bmatrix} -1 & 0\\ 0 & 1 \end{bmatrix}$. $\ZZ_6$ is generated by $\begin{bmatrix} 0 & -1\\ 1 & 1 \end{bmatrix}$.
%$D_3$ generated by $\begin{bmatrix} -1 & 0\\ 0 & -1 \end{bmatrix}$ and $\begin{bmatrix} -1 & 0\\ 1 & 1 \end{bmatrix}$. 
%For vertex type $[4^4]$ we take origin as the middle point of the line segment joining $v_{-1,0}$ and $v_{0,1}$, $A_9$ and $B_9$ be the middle point of the line segments joining $v_{0,0},v_{1,1}$ and $v_{-1,1}, v_{0,2}$ respectively. For vertex type $[3^6]$ we take $u_{0,0}, u_{1,0}, u_{0,1}$ as $O, A_8, B_8$ respectively.  
One can check that only matrices in $G_0$ having diagonal entries $1$ is identity matrix. Hence $A = I_2$. Therefore $c_1 = c_2 \implies M_1 = M_2.$ Each equivalence class of $\mathcal{S}_2$ is singleton. Therefore $\#\mathcal{S}_2 = \#\mathcal{S}_1 = \sigma(n)$. This proves  Theorem \ref{thm-main3}.
\end{proof}

\begin{proof}[Proof of Theorem \ref{thm-main4}]
Let $X$ be a $m$-orbital map of vertex type $(m,  \ell; u,v)$. Let $Y_1$ be a $k$-orbital cover of $X$. Consider number of sheets of the cover $Y_1\longrightarrow X$ be $n_1$. Let the set $C_1$ containing all $n$ sheeted covering of $X$ for $n\le n_1-1$. Now check that does there exists a $k$-orbital cover or not in $C_1$. If there does not exists one, then $Y_1$ be a minimal $k$-orbital cover otherwise take $Y_2$ be a $k$-orbital cover in $C_1$. Let number of sheets for the covering $Y_2\longrightarrow X$ be $n_2$. Then consider $C_2$ be the collection of all $s$ sheeted cover of $X$ for $s\le n_2$. Again check if there exists a $k$-orbital cover in $C_2$. If not then $Y_2$ minimal $k$-orbital cover of $X$. Otherwise proceed similarly to more lower sheeted covering. Since there are only finitely many covers of each sheeted so the process will terminate. This proves  Theorem \ref{thm-main4}
\end{proof}
Now to answer of the last part of Question \ref{ques} we prove the following,
\begin{claim}\label{lem1}
Let $X=E_j/K$ be a $m$-orbital map. Then there exists a group $\widetilde{G} \le$ Aut($E_j$) such that $E(E_j)$ has $m$ $\widetilde{G}$-orbits.
\end{claim}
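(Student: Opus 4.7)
The plan is to take $\widetilde{G}$ to be the normalizer $\mathrm{Nor}_{\mathrm{Aut}(E_j)}(K)$ and verify that the covering map $\eta\colon E_j\to X=E_j/K$ induces a bijection between $\widetilde{G}$-orbits on $E(E_j)$ and $\mathrm{Aut}(X)$-orbits on $E(X)$. Recall that $\mathrm{Aut}(X)=\mathrm{Nor}_{\mathrm{Aut}(E_j)}(K)/K$ by the result of \cite{drach:2019} already cited in the proof of Theorem \ref{thm-main1}, so once this bijection is established the claim follows immediately from the hypothesis that $X$ is $m$-orbital.

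The key verification is the following. Define $\phi\colon E(E_j)/\widetilde{G}\to E(X)/\mathrm{Aut}(X)$ by $\widetilde{G}e\mapsto \mathrm{Aut}(X)\eta(e)$. For well-definedness, if $e_2=g e_1$ for some $g\in\widetilde{G}$, then $\eta(e_2)=\eta(ge_1)=(gK)\eta(e_1)$, and $gK\in\widetilde{G}/K=\mathrm{Aut}(X)$, so the two images lie in the same $\mathrm{Aut}(X)$-orbit. Surjectivity of $\phi$ is immediate since $\eta$ is surjective on edges.

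The one step that needs attention is injectivity, which is a standard lifting argument but is the conceptual heart of the proof. Suppose $\mathrm{Aut}(X)\eta(e_1)=\mathrm{Aut}(X)\eta(e_2)$; choose $\bar{\alpha}\in\mathrm{Aut}(X)$ with $\bar{\alpha}\eta(e_1)=\eta(e_2)$ and lift it to $\alpha\in\widetilde{G}$. Then $\eta(\alpha e_1)=\bar{\alpha}\eta(e_1)=\eta(e_2)$, so $\alpha e_1$ and $e_2$ lie in the same $K$-orbit, i.e.\ $k\alpha e_1=e_2$ for some $k\in K$. Since $K\trianglelefteq\widetilde{G}$ in particular $k\in\widetilde{G}$, so $k\alpha\in\widetilde{G}$ and $e_1,e_2$ are in the same $\widetilde{G}$-orbit. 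This gives $\widetilde{G}e_1=\widetilde{G}e_2$, proving injectivity.

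Thus $\phi$ is a bijection, and since $E(X)$ has exactly $m$ orbits under $\mathrm{Aut}(X)$, the edge set $E(E_j)$ has exactly $m$ orbits under $\widetilde{G}$, as required. The main (mild) obstacle is ensuring the lift $\alpha$ exists inside $\widetilde{G}$, which is precisely the content of the identification $\mathrm{Aut}(X)=\widetilde{G}/K$; everything else is orbit bookkeeping.
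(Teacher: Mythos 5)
Your proposal is correct and takes essentially the same route as the paper: both choose $\widetilde{G}=\mathrm{Nor}_{\mathrm{Aut}(E_j)}(K)$ and use the identification $\mathrm{Aut}(X)=\mathrm{Nor}(K)/K$ to match $\widetilde{G}$-orbits on $E(E_j)$ with $\mathrm{Aut}(X)$-orbits on $E(X)$. The only difference is that you spell out the well-definedness and injectivity (lifting) steps that the paper leaves implicit, which is a welcome elaboration rather than a departure.
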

\begin{proof}
Let $X$ be a semi-equivelar toroidal map of type $[p_1^{r_1},p_2^{r_2},\dots p_k^{r_k}]$. By proposition \ref{propo-1} we get $X = E_j/K$ for some discrete subgroup $K$ of Aut($E_j$) where $E_j$ is semi-equivelar tilling of $\mathbb{R}^2$ with same vertex type. 
Let $O_1,O_2, \dots O_m$ be $G$-orbits of $E(X)$. Let $\eta:E_j\to X$ be the covering map. Then $\{\eta^{-1}(O_i) | i = 1,2, \dots m\}$ be a partition of $E(E_j)$. 
Aut($X$)$=$Nor($K$)/$K$. 
%Then $\alpha: E(X) \to E(X)$ be a bijection. If we cut the torus $X$ then by that tiling of the flat torus we can fill up the whole plane. Let $v \in E(M_j)$. Then $v$ belongs to some copy of flat torus in $M_j.$ Apply $\alpha$ on that copy and call this new map on $E(M_j)$ be $\widetilde{\alpha}$. Every point in $M_j$ can be represented as convex combination of some elements of $E(M_j)$. Using this extend $\widetilde{\alpha}$  to all over $M_j$ by defining $\widetilde{\alpha}(\sum_{i=1}^nt_iv_i) = \sum_{i=1}^nt_i\widetilde{\alpha}(v_i)$. Call that new map also $\widetilde{\alpha}$. Then $\widetilde{\alpha} \in Aut(M_j)$.
Now consider $\widetilde{G} = {\rm Nor}(K).$ Then
$E(E_j)$ forms $m$ $\widetilde{G}$-orbits. This proves  Claim \ref{lem1}.
\end{proof}
\begin{lemma}\label{orbb}
Let $X$ be a $m$-orbital edge-homogeneous toroidal map and $Y$ be a $k$-orbital cover of $X$. Then $k\le m$.
\end{lemma}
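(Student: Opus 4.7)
The plan is to apply Claim~\ref{lem1} to translate both orbit counts into orbit counts of normalizer subgroups acting on the universal tiling $E$, and then establish an inclusion between these normalizers.

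First I would write $X = E/K$ and $Y = E/L$ via Proposition~\ref{propo-1}, where $E$ is the associated edge-homogeneous tiling of $\mathbb{R}^2$. Orientability of the torus forces $K$ and $L$ to consist only of translations, so both are rank-$2$ sublattices of the translation subgroup $H \le \mathrm{Aut}(E)$, and $L \le K$ because $Y$ covers $X$. By Claim~\ref{lem1}, $m$ equals the number of $\mathrm{Nor}_{\mathrm{Aut}(E)}(K)$-orbits on $E(E)$ and $k$ equals the number of $\mathrm{Nor}_{\mathrm{Aut}(E)}(L)$-orbits on $E(E)$.

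The key step is to verify the inclusion $\mathrm{Nor}_{\mathrm{Aut}(E)}(K) \subseteq \mathrm{Nor}_{\mathrm{Aut}(E)}(L)$. Let $\gamma,\delta$ be generators of $K$, and take $\rho \in \mathrm{Nor}(K)$. Since $\rho$ normalizes $K$, there are integers $a,b,c,d$ with $\rho\gamma\rho^{-1} = \gamma^{a}\delta^{b}$ and $\rho\delta\rho^{-1} = \gamma^{c}\delta^{d}$. When $L$ arises in the canonical form of Claim~\ref{clm1}, namely $L = \langle \gamma^{n},\delta^{n}\rangle$ for a suitable integer $n$ (such as $n=|ad-bc|$), the abelianness of $K$ yields $\rho\gamma^{n}\rho^{-1} = (\gamma^{a}\delta^{b})^{n} = (\gamma^{n})^{a}(\delta^{n})^{b} \in L$, and similarly $\rho\delta^{n}\rho^{-1} \in L$. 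Hence $\rho \in \mathrm{Nor}(L)$. This is exactly the reverse of the computation carried out in Claim~\ref{normal}, and it can be seen as a direct dual to that argument.

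Once the inclusion $\mathrm{Nor}(K)\subseteq \mathrm{Nor}(L)$ is established, the standard orbit-counting principle applies: if a subgroup $G_{1}\le G_{2}$ acts on a set $S$, then every $G_{2}$-orbit is a union of $G_{1}$-orbits, so the number of $G_{2}$-orbits is at most the number of $G_{1}$-orbits. Applying this with $G_{1}=\mathrm{Nor}(K)$ and $G_{2}=\mathrm{Nor}(L)$ on $S = E(E)$ gives $k \le m$, as desired.

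The main obstacle is ensuring the inclusion $\mathrm{Nor}(K)\subseteq \mathrm{Nor}(L)$ for every sublattice $L\le K$ that gives rise to a cover $Y\to X$; the calculation above is clean precisely when $L$ is of the uniform form $\langle\gamma^{n},\delta^{n}\rangle$ singled out in the constructions of Theorems~\ref{thm-main1}--\ref{thm-main3}. For a genuinely arbitrary sublattice $L$, one must supplement the argument by exploiting that the cover $Y \to X$ can always be refined (or factored) through such a uniform sublattice without increasing the orbit count, so the bound $k\le m$ is preserved.
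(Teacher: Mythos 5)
Your route is genuinely different from the paper's. The paper argues by contradiction: it picks representatives of $m+1$ distinct $\mathrm{Aut}(Y)$-orbits of $E(Y)$, pushes them down to $E(X)$ via the covering map, applies pigeonhole to find two whose images lie in a common $\mathrm{Aut}(X)$-orbit, and then lifts the witnessing automorphism of $X$ back to an automorphism of $Y$ (adjusting by a deck translation if the two edges lie in different sheets). You instead try to reduce both orbit counts to counts of normalizer orbits on $E(E)$ and deduce $k\le m$ from the group inclusion $\mathrm{Nor}_{\mathrm{Aut}(E)}(K)\subseteq \mathrm{Nor}_{\mathrm{Aut}(E)}(L)$. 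That inclusion is the whole content of your argument, and it is false for a general sublattice $L\le K$ — as your own closing paragraph half-concedes, the computation only goes through when $L=\langle\gamma^{n},\delta^{n}\rangle$. Concretely, in the square tiling take $K=\langle(3,0),(0,3)\rangle$ and $L=\langle(6,0),(0,3)\rangle\le K$: the quarter-turn about a vertex normalizes $K$, but it sends $(0,3)$ to $(-3,0)\notin L$, so it does not normalize $L$; under $\mathrm{Nor}(L)$ the horizontal and vertical edges of the tiling fall into two distinct orbits even though $E/K$ is edge-transitive.

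The proposed repair --- ``refine or factor the cover through a uniform sublattice without increasing the orbit count'' --- does not close the gap, because the lemma quantifies over an arbitrary given cover $Y$ of $X$: you must bound the number of edge orbits of that particular $Y$, and replacing $Y$ by a different, better-behaved cover says nothing about $Y$ itself. The example above is precisely a cover $Y=E/L$ of a $1$-orbital $X=E/K$ in which the quarter-turn symmetry is lost, so no argument that treats $L$ as an arbitrary sublattice can extract $k\le m$ from a normalizer inclusion alone; whatever proof one gives must exploit additional structure relating $\mathrm{Aut}(Y)$ to $\mathrm{Aut}(X)$ through the covering, which is what the paper's lifting step is attempting. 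As written, your argument establishes the inequality only for the special covers $\langle\gamma^{n},\delta^{n}\rangle$ constructed in Theorem \ref{thm-main1}, not for the class of covers the lemma is stated for.
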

\begin{proof}
Let $k \geq m+1$. Let $O_1, O_2, \dots, O_{m+1}$
be distinct Aut($Y$)-orbits of $E(Y)$. Let $\eta$ be the covering map. Suppose $a_i \in O_i$ for $i = 1,2, \dots m+1$. Then $\eta(a_i) \in E(X) ~\forall i$. Since $E(X)$ has $m$ orbits so by pigeon hole principle there exists $i,j \in \{1,2,3, \dots m+1\}$ such that $\eta(a_i), \eta(a_j)$ are in same Aut($X$) orbits of $E(X)$. Therefore there exists  $\Upsilon \in$ Aut($X$) such that $\Upsilon(\eta(a_i)) = \eta(a_j)$. Let $\widetilde{\Upsilon}\in$ Aut($Y$) be the preimage of $\upsilon$ under the projection $p:{\rm Aut}(Y)\to {\rm Aut}(X)$. If $a_i$ and $a_j$ belongs to same sheet of the covering $Y\longrightarrow X$ then $\widetilde{\Upsilon}(a_i) = a_j$. If $a_i$ and $a_j$ belongs to two different sheet then apply a suitable translation on $a_j$ and get an element $a_j' \in O(a_j)$ such that $a_i$ and $a_j'$ belongs to same sheet. Therefore in both cases $ \exists ~\widetilde{\Upsilon} \in$ Aut($Y$) such that $\widetilde{\Upsilon}(a_i) = a_j$. This is a contradiction to $a_i$ and $a_j$ are in different orbits. This proves Lemma \ref{orbb}.
\end{proof}

\section{Acknowledgements}

Authors are supported by NBHM, DAE (No. 02011/9/2021-NBHM(R.P.)/R$\&$D-II/9101).

{\small
%% BIBLIOGRAPHY %%

}

\end{document}